\let\oldtocsection=\tocsection
\let\oldtocsubsection=\tocsubsection
\let\oldtocsubsubsection=\tocsubsubsection
\renewcommand{\tocsection}[2]{\hspace{0em}\oldtocsection{#1}{#2}}
\renewcommand{\tocsubsection}[2]{\hspace{3em}\oldtocsubsection{#1}{#2}}
\renewcommand{\tocsubsubsection}[2]{\hspace{6em}\oldtocsubsubsection{#1}{#2}}
\Crefname{construction}{Construction}{Constructions}
\newtheorem{lemma}{Lemma}[section]
\newtheorem{proposition}[lemma]{Proposition}
\newtheorem{definition/proposition}[lemma]{Definition/Proposition}
\newtheorem{corollary}[lemma]{Corollary}
\newtheorem{theorem}[lemma]{Theorem}
\theoremstyle{definition}
\newtheorem{definition}[lemma]{Definition}
\newtheorem{remark}[lemma]{Remark}
\newtheorem{example}[lemma]{Example}
\newtheorem{question}[lemma]{Question}
\newtheorem{hyp}[lemma]{Hypothesis}
\newcommand{\natls}{{\mathbb N}}
\newcommand\AAA{{\mathcal A}}
\newcommand\BB{{\mathcal B}}
\newcommand\CC{{\mathcal C}}
\newcommand\DD{{\mathcal D}}
\newcommand\EE{{\mathcal E}}
\newcommand\FF{{\mathcal F}}
\newcommand\GG{{\mathcal G}}
\newcommand\HH{{\mathcal H}}
\newcommand\LL{{\mathcal L}}
\newcommand\MM{{\mathcal M}}
\newcommand\PP{{\mathcal P}}
\newcommand\QQ{{\mathcal Q}}
\newcommand\SSS{{\mathcal S}}
\newcommand\TT{{\mathcal T}}
\newcommand\VV{{\mathcal V}}
\newcommand\PMF{{\PP\kern-2pt\MM\FF}}
\newcommand\PML{{\PP\kern-2pt\MM\LL}}
\newcommand\Hyp{{\mathbb H}}
\newcommand\Z{{\mathbb Z}}
\newcommand\R{{\mathbb R}}
\author{Mahan Mj}
\address{School of Mathematics, Tata Institute of Fundamental Research. 1, Homi Bhabha Road, Mumbai-400005, India}
\email{mahan@math.tifr.res.in}
\author{Balarka Sen}
\address{School of Mathematics, Tata Institute of Fundamental Research. 1, Homi Bhabha Road, Mumbai-400005, India}
\email{balarka2000@gmail.com, balarka@math.tifr.res.in}
\title{Tight contact structures on hyperbolic homology 3-spheres}
\numberwithin{equation}{subsection}
\begin{document}
	
	\subjclass[2010]{57R17, 57M50, 53D35}

	\keywords{hyperbolic 3-manifold, tight contact structure, geometric limit}
	
	\thanks{Both authors are supported in part by  the Department of Atomic Energy, Government of India, under project no.12-R\&D-TFR-5.01-0500, and by an endowment of the Infosys Foundation. MM was supported in part by the Fields Institute, Toronto during the special semester on "Randomness and Geometry", and by a DST JC Bose Fellowship}.

\begin{abstract} We produce a large class of hyperbolic  homology $3$-spheres  admitting  arbitrarily many distinct tight contact structures. We also
	produce a  sub-class admitting arbitrarily many distinct tight contact structures within the same homotopy class of oriented plane distributions. {As a corollary, we give a recipe to construct hyperbolic $L$-spaces admitting arbitrarily many distinct tight contact structures}.
	We also introduce a notion of geometric limits of contact structures compatible with geometric limits of hyperbolic manifolds {and} study the behavior of the 
	tight contact structures we construct under geometric limits.\end{abstract}

\maketitle

{\small \tableofcontents}

\section{Introduction}\label{sec-intro}

A contact structure on a $3$-manifold $M$ is a rank-$2$ distribution $\xi \subset TM$ such that $\xi$ is nowhere integrable. Contact structures on $3$-manifolds come in two distinct flavors: tight and overtwisted. A result due to Eliashberg \cite{yasha-89} states that overtwisted contact structures are completely classified by the homotopy class of their underlying rank-$2$ distributions. Consequently, all oriented closed $3$-manifolds admit overtwisted contact structures. In \cite[8.2.]{yasha-92}, Eliashberg asked: which $3$-manifolds admit tight contact structures? Etnyre and Honda \cite{honet-01} have demonstrated that the reducible manifold $P \# \overline{P}$ does not admit a tight contact structure, where $P$ denotes the Poincar\'e homology sphere. Nevertheless, the following question remains open:

\begin{question}[Question 8.2.2. in \cite{yasha-92} and Problem 4.142 in \cite{kirby-list}]\label{yashaq} Let $M$ be an irreducible 3-manifold. Does it admit a tight contact structure?\end{question}

The problem of {existence} of tight contact structures on lens spaces \cite{honda-1} and Seifert fibered spaces \cite{gompf-98, honda-2, listi-09} is completely settled. By the work of Agol \cite[Theorem 9.2]{agol-13} and Wise \cite{wise-21}, every closed hyperbolic $3$-manifold admits a finite-sheeted cover which is fibered over the circle. Combining this with the work of Eliashberg-Thurston \cite{elithu-98} on confoliations, one deduces that every closed hyperbolic $3$-manifold admits a finite-sheeted cover possessing a tight contact structure. 

However, not much is known beyond this with regard to Question \ref{yashaq} for the class of hyperbolic $3$-manifolds. In this direction, there is a {classification of tight contact structures for many surgeries} on the figure-eight knot, due to Conway-Min \cite{conmin-20}. Moreover, there are examples of hyperbolic 3-manifolds which do not admit fillable contact structures, {independently by Kaloti-Tosun \cite{kaltos-17} and Li-Liu \cite{liliu-19}}. {Also, Mark-Tosun \cite[Theorem 1.3]{martos-18} produce many examples of hyperbolic manifolds with tight contact structures, by verifying that the Heegaard-Floer contact invariant does not vanish for all positive rational surgeries on certain knots, {generalizing earlier works of Lisca-Stipsicz \cite{listi-04, listi-11}.}} For more examples of tight contact structures on hyperbolic $3$-manifolds, see \cite{hokama-03}, \cite{arimer-17}, \cite{etg-12}.

Gabai \cite{gabai-83} observed that any oriented closed irreducible $3$-manifold $M$ with $b_1(M) > 0$ admits a taut foliation. Using the theory of confoliations, one can therefore deduce that such manifolds also admit tight contact structures. Thus, the attention can be narrowed down to  hyperbolic rational homology $3$-spheres. The main theorem of this article provides examples of tight contact structures on a large class of integer homology $3$-spheres.

\begin{theorem}\label{thm-main} There exists an infinite family of
	sequences $\{M_{i,1},M_{i,2}, M_{i,3},\cdots \}$ of hyperbolic integer homology $3$-spheres, such that
	\begin{enumerate}[label=\normalfont(\arabic*)]
	\item $M_{i,n} \to M_{i,\infty}$, i.e.\ $M_{i,n}$ converges \emph{geometrically}
	to the finite volume complete hyperbolic manifold $M_{i,\infty}$,
	\item $M_{i,\infty}$, $M_{j,\infty}$ are distinct for $i\neq j$.
	\end{enumerate}
 such that the following holds: For any such sequence $\{M_{i,1},M_{i,2}, M_{i,3},\cdots \}$, there is a sequence of natural numbers $(a_n)_{n \geq 1}$ with $a_n \to \infty$ such that 
\begin{enumerate}[label=\normalfont(\arabic*)]
\item $M_{i,n}$ admits at least $a_n$ contact structures $\xi_1, \cdots, \xi_{a_n}$ that are pairwise non-contactomorphic,
\item $\xi_1, \cdots, \xi_{a_n}$ are all Weinstein-fillable.
\item $\xi_1, \cdots, \xi_{a_n}$ all belong to the same homotopy class of rank-$2$ distributions on $M_n$.
\end{enumerate}
\end{theorem}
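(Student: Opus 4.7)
The plan has three layers. First, for the geometric data: take each $M_{i,\infty}$ to be a distinct cusped finite-volume hyperbolic 3-manifold --- most simply the complement of a hyperbolic knot in $S^3$ whose $1/n$-Dehn surgeries are integer homology spheres --- and set $M_{i,n}$ to be the corresponding closed filling. Thurston's hyperbolic Dehn surgery theorem then gives hyperbolicity of $M_{i,n}$ for $n \gg 0$ and geometric convergence $M_{i,n} \to M_{i,\infty}$. Varying the knot used to build $M_{i,\infty}$ yields the infinite family indexed by $i$, with the $M_{i,\infty}$ pairwise non-isometric.

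Second, for the contact structures, present each $M_{i,n}$ by a contact surgery diagram, i.e.\ by Legendrian $(-1)$-surgery on a Legendrian link $\Lambda_n \subset (S^3,\xi_{\mathrm{std}})$. The plan is to choose $\Lambda_n$ to admit a family of Legendrian representatives $\Lambda_n^{(1)},\ldots,\Lambda_n^{(a_n)}$ of the same topological link type, sharing Thurston-Bennequin and rotation numbers on each component, but pairwise non-Legendrian-isotopic, with $a_n \to \infty$. Standard sources for such families include the Legendrian classification of cabled and torus knots (Etnyre-Honda, Etnyre-LaFountain-Tosun), where the number of Legendrian classes with fixed classical invariants diverges as a cabling parameter grows, or direct knot-Floer constructions of Legendrian knots distinguished by their LOSS invariants. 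Legendrian $(-1)$-surgery on each $\Lambda_n^{(j)}$ produces a Weinstein cobordism from $(S^3,\xi_{\mathrm{std}})$ to $(M_{i,n},\xi_j)$, so every $\xi_j$ is Weinstein-fillable and in particular tight.

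Third, two standard facts finish the argument. Gompf's formula expresses both the $\mathrm{spin}^c$ structure and the $d_3$ invariant of a contact surgery purely in terms of Thurston-Bennequin numbers, rotation numbers, and the linking matrix of the Legendrian link; since these data agree across the $\Lambda_n^{(j)}$, all $a_n$ structures $\xi_j$ lie in a single homotopy class of oriented plane fields on $M_{i,n}$, giving condition~(3). On the other hand, functoriality of the Heegaard-Floer contact invariant under Legendrian surgery (Ozsv\'ath-Szab\'o, Lisca-Ozsv\'ath-Stipsicz-Szab\'o) identifies $c(\xi_j) \in \widehat{HF}(-M_{i,n})$ with the image of the LOSS invariant of $\Lambda_n^{(j)}$; choosing representatives with pairwise distinct LOSS invariants then makes the $c(\xi_j)$ pairwise distinct, hence the $\xi_j$ pairwise non-contactomorphic, giving condition~(1).

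The principal obstacle is to coordinate all three constraints simultaneously: the topological surgery must be a $\mathbb{Z}$-homology sphere Dehn filling of the cusped hyperbolic $M_{i,\infty}$ (for geometric convergence to hold), and yet must admit within a single Legendrian presentation a family of common-classical-invariant Legendrian representatives whose count $a_n$ diverges with $n$. Arranging the surgery diagram so that this large Legendrian multiplicity survives the integer-homology-sphere and hyperbolicity restrictions --- and is compatible with the geometric limit structure --- is the delicate technical ingredient, and is where I expect the bulk of the paper's effort will be spent, most likely through an explicit family of cabled or twist-knot presentations of the sequences $\{M_{i,n}\}$.
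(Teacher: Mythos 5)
There is a genuine gap, and it sits exactly where you placed your hopes. Your multiplicity mechanism --- many Legendrian representatives $\Lambda_n^{(j)}$ of a fixed link type with \emph{identical} classical invariants, distinguished by LOSS-type invariants, whose Legendrian surgeries stay distinct --- is deferred as ``the delicate technical ingredient,'' but it is not established and is not how the construction works. No known Legendrian-large families (cables, twist knots, etc.) are arranged so that a single smooth presentation of the $1/n$-surgeries on a fixed hyperbolic knot carries a number of fixed-classical-invariant Legendrian classes growing with $n$; and even granting such families, distinctness of the LOSS invariants does not automatically survive the surgery cobordism map (the images in $\widehat{HF}(-M_{i,n})$ may coincide or vanish), so non-isotopy of the $\xi_j$ would not follow. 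Two further gaps: distinct contact invariants only give non-\emph{isotopic} structures, not non-contactomorphic ones --- one still needs the finiteness of the mapping class group of the hyperbolic $M_{i,n}$ (Mostow rigidity plus Gabai--Meyerhoff--Thurston) and an orbit-counting step; and to realize the $0$-framed component of the slam-dunked diagram as a contact $(-1)$-surgery you need a Legendrian representative of the knot with $\mathrm{tb}=1$, which fails for general hyperbolic knots and is exactly why the paper imposes the crossing condition $c_+ - 2c_- - m \geq 1$ on braid closures.

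For comparison, the paper's mechanism avoids your hard input entirely: it fixes \emph{one} Legendrian representative $K_{\mathrm{Leg}}$ of $\widehat{\beta}$ with $\mathrm{tb}=1$, $\mathrm{rot}=0$, and varies instead the rotation number $2k-n$ of the Legendrian meridian unknot with $\mathrm{tb}=-n+1$, producing $n-1$ Weinstein fillings $W_{n,k}$ of $M_n$. These are distinguished up to isotopy \`a la Lisca--Mati\'c via Kronheimer--Mrowka's relative Seiberg--Witten invariant: an isotopy would force the canonical $\mathrm{Spin}^c$ structures to agree, contradicting $\langle c_1(W_{n,k}), [h^2_U]\rangle = 2k-n$. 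The structures have \emph{different} classical data on the unknot component, yet lie in one homotopy class of plane fields because Gompf's $\theta$-invariant is complete on homology spheres and $c_1(W_{n,k}) = (2k-n)\,\mathrm{PD}[h^2_K]$ with $[h^2_K]^2 = 0$ (the $0$-framing), so $c_1^2$, $\chi$, $\sigma$ are all $k$-independent. Your appeal to Gompf's formula for condition (3) is fine as far as it goes, but the heart of the theorem --- producing and distinguishing arbitrarily many fillable structures on a hyperbolic homology sphere --- is not supplied by your outline.
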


Gromov \cite{gromov-85} and Eliashberg \cite{yasha-90} observed that Weinstein-fillable contact structures are tight. Colin-Giroux-Honda \cite{colgirhon-09} established that for any closed oriented atoroidal $3$-manifold (e.g.\., hyperbolic $3$-manifold) $M$, the number of tight contact structures that $M$ admits, hereby denoted as $\#\mathrm{Tight}(M)$, is finite. We immediately have the following corollary of Theorem \ref{thm-main}:

\begin{corollary}\label{cor-tightgrows}
	For any sequence $M_{i,n} \to M_{i,\infty}$ as in Theorem \ref{thm-main}, $\#\mathrm{Tight}({M_{i, n}}) \to \infty$ as $n \to \infty$. \end{corollary}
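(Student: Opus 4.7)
The plan is to observe that Corollary \ref{cor-tightgrows} follows essentially immediately by combining Theorem \ref{thm-main} with the two classical facts already recalled right before the corollary statement, namely the Gromov--Eliashberg tightness of Weinstein-fillable contact structures and the Colin--Giroux--Honda finiteness theorem for atoroidal 3-manifolds. No additional geometric input is needed; the argument is purely a bookkeeping step.

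Concretely, I would proceed as follows. Fix a sequence $M_{i,n} \to M_{i,\infty}$ as in Theorem \ref{thm-main} and let $(a_n)$ be the sequence of natural numbers produced by that theorem with $a_n \to \infty$. Theorem \ref{thm-main}(1) gives contact structures $\xi_1,\dots,\xi_{a_n}$ on $M_{i,n}$ which are pairwise non-contactomorphic. In particular, no two of them can be related by a contactomorphism isotopic to the identity, so they are pairwise non-isotopic, and hence represent distinct elements in the set of tight contact structures counted by $\#\mathrm{Tight}(M_{i,n})$. By Theorem \ref{thm-main}(2) each $\xi_k$ is Weinstein-fillable, and by the result of Gromov \cite{gromov-85} and Eliashberg \cite{yasha-90} recalled above every Weinstein-fillable contact structure is tight. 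Therefore
\[
\#\mathrm{Tight}(M_{i,n}) \;\geq\; a_n.
\]

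Letting $n\to\infty$ and using $a_n\to\infty$ yields the claim. There is no real obstacle: the only tiny subtlety is justifying that ``pairwise non-contactomorphic'' guarantees distinctness in the quantity $\#\mathrm{Tight}$ regardless of whether one counts tight contact structures up to isotopy or up to diffeomorphism, and this is immediate since contactomorphic-via-an-isotopy is a stronger equivalence relation than contactomorphic. Finiteness of $\#\mathrm{Tight}(M_{i,n})$ coming from \cite{colgirhon-09} is not logically needed for the corollary, but it is what makes the statement ``$\#\mathrm{Tight}(M_{i,n}) \to \infty$'' meaningful as a statement about a sequence of finite integers rather than a trivially infinite quantity.
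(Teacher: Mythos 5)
Your proposal is correct and matches the paper's (implicit) argument: the paper treats the corollary as immediate from Theorem \ref{thm-main} together with the Gromov--Eliashberg tightness of Weinstein-fillable contact structures, which is exactly the bookkeeping you carry out, including the harmless point about non-contactomorphic implying distinct in either counting convention.
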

	
A word about the history of this problem is in order:
\begin{enumerate}
\item In \cite{lismat-97}, Lisca-Matic proved an analogous result for certain Seifert-fibered rational homology $3$-spheres, given by $1/n$-Dehn surgery on the trefoil knot.
\item {In \cite{listi-04}, Lisca-Stipsicz proved the existence of Seifert-fibered rational homology spheres $M_n$ carrying at least $n$ non-contactomorphic tight, nonfillable contact structures.}
\item In \cite[Theorem A.7]{bm24}, Barthelme, Mann and  Bowden prove the existence of  closed hyperbolic 3-manifolds $M_n$ with at least $n$ non-contactomorphic
Anosov contact structures. However, their examples all have positive first Betti number, i.e.,
$b_1(M_n) > 0$. In fact, the examples in  \cite[Theorem A.7]{bm24} satisfy $b_1(M_n) \to \infty$ as $n \to \infty$.
\end{enumerate}

{We also remark that Conclusion $(3)$ of Theorem~\ref{thm-main} indicates that the $h$-principle, which always holds for overtwisted contact structures \cite{yasha-89}, fails arbitrarily badly for tight contact structures on the manifolds that we construct.}\\

\noindent
\textbf{Outline of the paper:}\\
The examples in Theorem~\ref{thm-main} arise as $1/n$-Dehn surgeries on certain hyperbolic knots $K$ as $n \to \infty$. The knots $K$ in question satisfy the following:
\begin{enumerate}
\item They are closures $\widehat{\beta}$ of pseudo-Anosov braids ${\beta}$,
\item {The braid $\beta$ has} Haken genus greater than one (see Definition~\ref{def-hg}).
\item Suppose ${\beta}$ has $m$ strands. Let $c_+$ and $c_-$ respectively denote the number of positive and negative crossings of $\beta$. Then
$$c_+ - 2 c_- - m \geq 1.$$
\end{enumerate}
Conditions (1) and (2) above guarantee that the closure $\widehat{\beta}$ is, in fact, a hyperbolic knot. This is established in Section~\ref{sec-braid} and provides a reasonably elementary sufficient condition guaranteeing hyperbolicity of $\widehat{\beta}$. A more sophisticated sufficient condition may be found in \cite{ito}. Condition (3) ensures the existence of a Legendrian representative of $K$ with an appropriate framing
(see Section~\ref{subsec-cntstr}).  {In the beginning of Section~\ref{sec-pfmain}, we assume in addition that for the braid $\beta$ above,
\begin{itemize}
\item [(4)] $c_+ + c_- \equiv m + 1 \pmod{2}$.
\end{itemize}
This is only logically necessary for the third conclusion of Theorem~\ref{thm-main} regarding the fact that  the underlying rank-$2$ distributions of the contact structures are in the same homotopy class. This part of the argument is somewhat independent of the arguments needed for the rest of the conclusion of Theorem~\ref{thm-main}, and is discussed in Section~\ref{subsec-hprincfail}. We refer the reader to Remark~\ref{rmk-generic} and Section~\ref{subsec-mfds} for a discussion on sufficient conditions that guarantee conditions (1), (2), (3) and (4).}

While Section~\ref{sec-pfmain} is devoted to a proof of Theorem~\ref{thm-main}, we generalize it in two directions in Section~\ref{sec-furex} (see Theorem~\ref{thm-eglinkrat}):
\begin{enumerate}
\item First, we allow links $L$ in place of $K$. Such links $L$ {are required to be closures of braids $\beta$ satisfying Conditions (1) and (2) above. If $\beta_i$, $1 \leq i \leq \ell$ denotes the braids corresponding to the $i$-th cycle in the cycle decomposition of the permutation associated to $\beta$, then each $\widehat{\beta}_i$ is a knot and $L = \widehat{\beta} = \widehat{\beta}_1 \cup \widehat{\beta}_2 \cup \cdots \cup \widehat{\beta}_{\ell}$. We further require that each $\beta_i$ satisfy a strong form of Condition $(3)$ above. Namely, let $c_{i,+}, c_{i,-}$ denote the number of positive and negative crossings of $\beta_i$, let $m_i$ denote the number of strands of $\beta_i$ and let $d_{i,-}$ denote the number of negative crossings between strands of $\beta_i$ and strands of $\beta_j$ for all $j \neq i$, $1 \leq j \leq \ell$. Then we demand,
$$c_{i,+} - 2c_{i,-} - d_{i,-} - m_i \geq 1.$$}

\item The $1/n$-surgeries are replaced by positive $p/q$-surgeries (on each component of $L$). {This is done in two stages, with $p/q \in (0, 1)$ in Section \ref{subsec-surglink} and then with arbitrary $p/q > 0$ in Section \ref{subsec-surgknot}. The key observation in the $p/q \in (0, 1)$ case is that ${-q/p}$ has a \emph{terminating negative continued fraction} expansion $[a_0,\cdots, a_n]$ with each $a_i \leq -2$ (see Definition~\ref{def-negcont}). For the general $p/q > 0$ case, we effectively reduce to the $p/q \in (0, 1)$ case by iterated applications of the slam-dunk operation on multiple disjoint copies of the unknotted meridian, see Proposition \ref{prop-allposcont}, and Corollaries~\ref{cor-allposknot} and \ref{cor-allposlink}. }
\end{enumerate}

This allows us to construct {hyperbolic} rational homology spheres $M$ with arbitrarily many tight contact structures by rational Dehn filling on links $L$ as above (see Corollary~\ref{cor-conthyprat}). {Moreover, we also obtain a large class of hyperbolic knots and links, such that all positive rational surgeries on them admit tight (in fact, Weinstein-fillable) contact structures (see Corollary~\ref{cor-allposknot} and Corollary~\ref{cor-allposlink}).} {We use this result in Section~\ref{subsec-lspace} to give a recipe for producing hyperbolic $L$-spaces $M_n$ with $\#\mathrm{Tight}(M_n) \to \infty$ (see Proposition \ref{prop-lspace} and Proposition \ref{prop-lspacetight}, as well as Example \ref{eg-lspace}).}

Section~\ref{sec-geolim} provides a notion of geometric convergence of hyperbolic 3-manifolds \emph{equipped with contact structures}. Tripp \cite{tripp-06} studied contact structures on non-compact 3-manifolds. He showed that there exist uncountably many non-contactomorphic tight contact structures on an irreducible {open} 3-manifold with an end of positive genus. We use the technology of bypasses developed by Honda \cite{honda-1,honda-2} together with the notion of slope at infinity from  \cite{tripp-06} to show that there exist uncountably many non-contactomorphic tight contact structures on the complement of a hyperbolic knot $K$ where
\begin{enumerate}
\item $S^3 \setminus K$ is a geometric {limit} of 
closed hyperbolic 3-manifolds \emph{equipped with contact structures},
\item The knots $K$ satisfy  conditions (1), (2)  and  (3) above.
\end{enumerate} 
\noindent
See Theorem~\ref{thm-geomlim} and Corollary~\ref{cor-inflts}. \\

\noindent
\textbf{Acknowledgments:}  We thank Marc Kegel for comments on an earlier draft, {and B\"ulent Tosun for pointing us to the references \cite{martos-18} and \cite{kaltos-17}.}
The second author thanks Dishant Pancholi and Mike Miller Eisemeier for useful conversations.

\section{Construction of hyperbolic pseudo-Anosov braids}\label{sec-braid}
\subsection{Braid closures}\label{sec-braidclosure}
Let $B_m$ denote the braid group on $m$ strands, and let $\beta \in B_m$ be a braid. Let $\widehat{\beta} \subset S^3$ denote the link obtained from taking the closure of $\beta$. Fix $m$ points on a disk, hereby denoted as $p_1, \cdots, p_m \in D^2$. Let $\mathrm{MCG}_{\partial}(D^2, m)$ denote the mapping class group of the disk with $p_1, \cdots, p_m$ as marked points. Then, $B_m \cong \mathrm{MCG}_{\partial}(D^2, m)$. Therefore, $\beta$ gives rise to a mapping class $\phi$. 
Let 
$$T(\phi) = (I \times D^2)/(x, 0) \sim (\phi(x), 1),$$
denote the mapping torus of $\phi$. Fix a base-point $z_0 \in \partial D^2$. The image of $I \times \{z_0\}$ under the quotient map $I \times D^2 \to T(\phi)$ gives an embedded curve $c \subset \partial T(\phi)$. Choose a genus $1$ Heegaard decomposition $S^3 = (S^1 \times D^2) \cup (D^2 \times S^1)$, where we suppose coordinates on either torus are chosen in a way that $S^1 \times \{z_0\}$ in the first solid torus bounds a meridian disk $D^2 \times \{z_0\}$ in the second solid torus. Let us also choose an identification $T(\phi) \cong S^1 \times D^2$ given by sending $c$ to $S^1 \times \{z_0\}$. Consider the composition,
$$D^2 \times I \to T(\phi) \cong S^1 \times D^2 \hookrightarrow S^3,$$
where the first map is the quotient map to the mapping torus, the middle identification is the one chosen above, and the final embedding is the inclusion of the first solid torus in the chosen Heegaard decomposition of $S^3$. The image of $\bigsqcup_i \{p_i\} \times [0, 1]$ under the above map defines the link $\widehat{\beta} \subset S^3$, contained within the solid torus $S^1 \times D^2$. We call $S^1 \times D^2$ the \emph{braid torus}.

We will say that $\beta$ is \emph{pseudo-Anosov} if the mapping class $\phi$ is pseudo-Anosov. In this case, the interior of $(S^1 \times D^2) \setminus \widehat{\beta}$ admits a the structure of a complete  cusped hyperbolic $3$-manifold by Thurston's work on hyperbolization of fibered $3$-manifolds \cite{thurston-86, otal-book}.

\begin{definition}\label{def-hg} Let $\beta \in B_m$ be a pseudo-Anosov braid. We define an \emph{essential surface for $\beta$} to be a properly embedded compact surface $\Sigma \subset (S^1 \times D^2) \setminus \widehat{\beta}$ with boundary $\partial \Sigma = \bigsqcup_i S^1 \times \{x_i\}$ (where $x_i \in \partial D^2$), such that $\Sigma$ is incompressible, $\partial$-nonparallel and cusp-nonparallel. We define the \emph{Haken genus} of $\beta$ as
$$h(\beta) := \min \{g(\Sigma) : \Sigma \subset (S^1 \times D^2) \setminus \widehat{\beta}\} \cup \{\infty\},$$
where $g(\Sigma)$ denotes the genus of $\Sigma$, and $\Sigma$ varies over all essential surfaces for $\beta$.\end{definition}

\noindent {\bf Essential surfaces with minimal intersection with a Heegaard torus.}\\
Next, let $\Sigma \subset S^3 \setminus \widehat{\beta}$  be an embedded incompressible surface.
Note that 
$$S^3 \setminus \widehat{\beta} = \left (S^1 \times D^2 \setminus  \widehat{\beta} \right ) \bigcup \left ( D^2 \times S^1 \right ).$$
We refer to this decomposition as the Heegaard decomposition of  $S^3 \setminus \widehat{\beta}$ and $\TT=\partial D^2 \times S^1$ as the \emph{toral Heegaard surface}, or simply the Heegaard torus. As in Definition~\ref{def-hg}, we refer to 
$(S^1 \times D^2) \setminus  \widehat{\beta}$ as the braid-torus. We also 
refer to 
$ D^2 \times S^1 $ as the \emph{solid Heegaard torus}.

Assume that $\Sigma$ intersects $\TT$ transversely and minimally, i.e.\ $\Sigma \cap \TT$
has the minimum number of components possible. 

If $S^3 \setminus \widehat{\beta}$  is irreducible, then no component $\sigma$ of  $\Sigma \cap \TT$ is homotopically trivial. Indeed, otherwise  $\sigma$ bounds disks $D_\Sigma$ and 
$D_\TT$ on $\Sigma , \TT$ respectively. By irreducibility of $S^3 \setminus \widehat{\beta}$,
$D_\Sigma\cup
D_\TT$ bounds a ball, and we can isotope $\TT$ to reduce the number of components of 
$\Sigma \cap \TT$, contradicting minimality of the intersection. Hence,
$\Sigma\cap \TT$ consists of finitely many disjoint
freely homotopic essential curves $\gamma_1, \cdots, \gamma_k$ in $\TT$. Let $m/n$ denote the slope
of each $\gamma_i$. Here, we use the convention that a curve has slope $m/n$ if
it represents $m-$times a longitude plus $n-$times a meridian of the  \emph{solid Heegaard torus}.

\begin{lemma}\label{lem-slopezero}
Suppose $k>0$. Then each $\gamma_i$ has slope zero.
\end{lemma}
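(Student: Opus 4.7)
The plan is to study $\Sigma \cap V$, where $V := D^2 \times S^1$ denotes the solid Heegaard torus, and to show that every component of $\Sigma \cap V$ is a meridian disk of $V$. Since each $\gamma_i$ then appears as the boundary of such a meridian disk, it is a meridian of $V$, i.e.\ a slope-zero curve in the convention of the lemma.

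The first step is to verify that each component $F$ of $\Sigma \cap V$ is incompressible in $V$. Suppose $D \subset V$ were a compressing disk for $F$, so $\partial D \subset F$ is essential in $F$. Since $\partial D$ bounds the disk $D$ in $V \subset S^3 \setminus \widehat{\beta}$, incompressibility of $\Sigma$ furnishes a disk $D_\Sigma \subset \Sigma$ with $\partial D_\Sigma = \partial D$, and the sphere $D \cup D_\Sigma$ bounds a $3$-ball $B$ by irreducibility. If $D_\Sigma \subset V$, then $D_\Sigma \subset F$, contradicting essentiality of $\partial D$ in $F$; otherwise $D_\Sigma$ meets $\TT$, and an isotopy of $\Sigma$ across $B$ that replaces $D_\Sigma$ by $D \subset V$ strictly reduces $|\Sigma \cap \TT|$, contradicting minimality.

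Next, I argue by contradiction: suppose the common slope $m/n$ satisfies $m \neq 0$, so each $\gamma_i$ is essential in $V$. For every component $F$ of $\Sigma \cap V$, the incompressibility of $F$ in $V$ gives an injection $\pi_1(F) \hookrightarrow \pi_1(V) \cong \Z$. Since $F$ inherits an orientation from $\Sigma$ and has essential boundary, $F$ must be an annulus whose boundary consists of two parallel slope-$m/n$ curves on $\TT$. The classification of incompressible surfaces in a solid torus---most transparently derived from the Seifert fibration $V = D^2 \times S^1 \to D^2$ together with the observation that every properly embedded arc in the disk $D^2$ is boundary-parallel, so every vertical annulus in $V$ is $\partial$-parallel---then forces $F$ itself to be $\partial$-parallel in $V$.

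Finally, I rule out $\partial$-parallel components by invoking minimality once more. Pick an innermost $\partial$-parallel component $F$ of $\Sigma \cap V$, cobounding a product region $R \cong F \times [0,1] \subset V$ with an annulus $A \subset \TT$; by the innermost choice, $R$ meets no other component of $\Sigma \cap V$. Transversality of $\Sigma$ with $\TT$ then forces the interior of $A$ to be disjoint from $\bigsqcup_i \gamma_i$, since otherwise a piece of $\Sigma \cap V$ adjacent to some $\gamma_j$ lying in the interior of $A$ would have to enter $R$. Isotoping $\Sigma$ by pushing $F$ across $R \cup A$ slightly into the braid torus therefore eliminates the two boundary circles of $F$ from $\Sigma \cap \TT$, reducing $k$ by $2$ and contradicting minimality. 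I expect the main subtlety of the argument to lie in the classification of incompressible surfaces in a solid torus invoked above, though this is a standard result in $3$-manifold topology.
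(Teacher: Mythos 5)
Your argument is correct and follows essentially the same route as the paper's proof: assuming $m \neq 0$, the components of $\Sigma \cap (D^2 \times S^1)$ are incompressible annuli with essential boundary of slope $m/n$, hence boundary-parallel, and pushing them across $\TT$ into the braid torus reduces $|\Sigma \cap \TT|$, contradicting minimality. You merely spell out details the paper leaves implicit (incompressibility of the pieces, the solid-torus classification via the Seifert fibration, and the innermost-annulus choice), so there is nothing substantive to add.
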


\begin{proof}
If not, then $\Sigma\cap \TT=\gamma_1\cup \cdots \cup \gamma_n$ where each $\gamma_i$ has slope $m/n$ with $m \neq 0$. Hence, $\gamma_i$ is homotopically non-trivial in the solid Heegaard torus $D^2 \times S^1$. Since $\Sigma\cap  ( D^2 \times S^1 )$ is minimal, it follows that each
component of $\Sigma\cap  ( D^2 \times S^1 )$ is an annulus $\AAA$ where the boundary curves have slope $m/n$ on $\TT$. Since any two such curves bound an annulus on $\TT$ isotopic rel.\ boundary to $\AAA$, we can isotope $\Sigma$ so that
each such $\AAA$ is isotoped rel. boundary into $\TT$; and subsequently into the braid torus. Since $\Sigma\cap \TT$ is minimal, it follows that 
$\Sigma\cap \TT=\emptyset$, contradicting $k>0$.
\end{proof}

 The following is immediate from Lemma~\ref{lem-slopezero}.
\begin{corollary}\label{cor-classfn} Let $\Sigma$ be an embedded incompressible surface in an irreducible $M = S^3 \setminus \widehat{\beta}$   having minimal intersection with $\TT$. Then
\begin{enumerate}[label=\normalfont(\arabic*)]
\item either $\Sigma$ is contained in the braid torus,
\item or each $\gamma_i$ bounds a disk in the solid Heegaard torus.
\end{enumerate}
\end{corollary}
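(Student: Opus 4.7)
The plan is to case-split on whether $k = |\Sigma \cap \TT|$ is zero or positive.

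In the case $k > 0$, I would invoke Lemma~\ref{lem-slopezero} directly: it asserts that every component $\gamma_i$ has slope $0$ on $\TT$. Under the paper's convention, a curve of slope $0/n$ represents $n$ meridians of the solid Heegaard torus $D^2 \times S^1$; since $\gamma_i$ is a simple closed curve on the torus $\TT$, its homology class is primitive, forcing $n = \pm 1$. Thus $\gamma_i$ is isotopic to the meridian $\boundary D^2 \times \{\ast\}$ of the solid Heegaard torus and bounds the corresponding meridional disk $D^2 \times \{\ast\}$ in $D^2 \times S^1$. This is conclusion~(2).

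In the case $k = 0$, $\Sigma$ is disjoint from $\TT$, so each connected component of $\Sigma$ lies in the interior of exactly one of the two pieces of the Heegaard decomposition. To rule out the solid Heegaard torus $D^2 \times S^1$: the link $\widehat{\beta}$ lies in the braid torus, so $\boundary N(\widehat{\beta})$ is disjoint from $D^2 \times S^1$, which forces a hypothetical component $\Sigma_0 \subset D^2 \times S^1$ to be closed. But the solid torus is irreducible with $\pi_1 \cong \Z$, so it contains no closed incompressible surface: a $2$-sphere bounds a ball by irreducibility, and no surface of positive genus admits an injection of its $\pi_1$ into $\Z$. Hence every component of $\Sigma$ lies in the braid torus, giving conclusion~(1).

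I expect no serious obstacle: the argument is a bookkeeping consequence of Lemma~\ref{lem-slopezero} together with the classical fact that a solid torus admits no closed incompressible surface. The only mild subtlety is using primitivity of the homology class of a simple closed curve on $\TT$ to upgrade ``slope zero'' to ``exactly one meridian'', which is where the disk-bounding conclusion in~(2) hinges.
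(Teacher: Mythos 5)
Your proposal is correct and follows the same route as the paper, which simply records the corollary as immediate from Lemma~\ref{lem-slopezero}: your two cases (slope zero plus primitivity of an essential simple closed curve on $\TT$ giving meridional disks when $k>0$; no closed incompressible surface in a solid torus when $k=0$) are exactly the details the paper leaves implicit.
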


\subsection{An application of Thurston's hyperbolization}\label{sec-thurston}

Recall that a compact  3-manifold $M$ (possibly with boundary) is \emph{atoroidal} if any essential torus in $M$ is homotopic into $\partial M$. Also, $M$ is  \emph{Haken} if $M$ is irreducible and admits an embedded incompressible surface of positive genus.

\begin{remark}\label{rmk-atoroidal}
The above definition of atoroidal is sometimes referred to as \emph{geometrically atoroidal}, where the (homotopically) essential torus is, by definition, embedded. However, there are a few examples of irreducible geometrically atoroidal  3-manifolds $M$ that satisfy $\Z^2 \subset \pi_1(M)$, where the  $\Z^2$ is carried by an immersed torus not homotopic into $\partial M$, i.e.\ $M$ is \emph{not homotopically atoroidal}. We discuss the relevant examples now. In the following, we shall be interested in link complements, where $\partial M$ consists of tori. If $M$ is atoroidal, this forces $M$ to be a Seifert-fibered space over a base-orbifold of type
$(p,q,\infty)$ with $p, q \in \natls \cup \{\infty\}$, and $p \leq q$. If $p=q=\infty$, $M$ is necessarily
homeomorphic to $S_{0,3} \times S^1$, where $S_{0,3}$ is a 3-holed sphere. In this case, $M$ is the complement of a trivial 3-component link. 

If $q =\infty$, and
$p \in \natls$, then $\pi_1(M)$ is $\Z \times (\Z\ast\Z/p)$ .   $M$ is the complement in  $S^3$ of a 2-component link $L$ of the form $L=K_1 \cup K_2$, where $K_1$ is the core curve of a solid torus $\mathbb T$
 embedded in a standard way in $S^3$, and $K_2$ is a $(p_1,p)$ torus knot on the boundary $\partial \mathbb T$ (with any $p_1 \geq 1$). We shall refer to such links $L$ as \emph{2-component $T^2 \times I-$links}, as the components of $L$ may be assumed to lie on the boundary components 
 of $T^2 \times \{0,1\}$ of $T^2 \times I$.

Finally, let $p \leq q < \infty$ and $(p,q)=1$. Then $M$ is the complement of a $(p,q)-$torus knot. 
It is indeed possible that such an $M$ can be the complement in  $S^3$ of the closure of a pseudo-Anosov braid. In fact, consider the braid group $B_3$ for braids on 3 strands, with standard generators
$\sigma_1, \sigma_2$. The braid $\beta=\sigma_1^3\sigma_2^{-1}$ satisfies
\begin{enumerate}
\item $\beta$ is pseudo-Anosov. Note that this means that its complement in the braid torus is hyperbolic.
\item $\widehat{\beta}$ is a  $(p,q)-$torus knot in $S^3$. Note that this implies that its complement in $S^3$ is \emph{not} hyperbolic.
\end{enumerate}
This unusual phenomenon accounts for the hypothesis in Corollary \ref{cor-pahakengenus} excluding such examples. 

More generally, if $dp_1=p \leq q =dq_1 < \infty$ and $(p_1,q_1)=1$, then $M$ is the complement of a link $L$ consisting of $d$ parallel copies of a  $(p_1,q_1)-$torus knot in $S^3$. We shall refer to  $M$ in this case as a \emph{ $(p,q)-$torus link}. But in this case, $L$ is a satellite of a $(p_1,q_1)-$torus knot; in fact it consists of $d$ parallel copies a  $(p_1,q_1)-$torus knot $K$. Let $\nu(K)$ denote a solid torus neighborhood of $K$.
Then $\partial  \nu(K)$ is an embedded incompressible torus in the complement of $L$.
\end{remark}

We start with Thurston's geometrization theorem:

\begin{theorem}\cite{otal-geom,kapovich-book}\label{thm-thurstonmonster}
Let $M$ be a compact atoroidal Haken 3-manifold (possibly with boundary). Then the interior of $M$ admits a complete hyperbolic structure.
\end{theorem}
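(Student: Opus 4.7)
The plan is to follow Thurston's original strategy, bifurcating on whether $M$ admits a hierarchy terminating in balls or $M$ is fibered over the circle. In the non-fibered Haken case, I would proceed by induction along a Haken hierarchy $M = M_0 \supset M_1 \supset \cdots \supset M_n$, where $M_{i+1}$ is obtained from $M_i$ by cutting along a properly embedded incompressible surface. The base case is a disjoint union of balls (with boundary pattern), which is trivially hyperbolizable, and the inductive step requires gluing a hyperbolic structure on $M_{i+1}$ back along the cut to produce one on $M_i$. In the fibered case, where $M$ is (virtually) the mapping torus of a pseudo-Anosov homeomorphism $\phi$ of a surface $S$, I would construct the hyperbolic structure directly by producing a $\phi^*$-invariant point in the quasi-Fuchsian deformation space of $\pi_1(S)$.

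For the non-fibered inductive step, after cutting along an incompressible surface one has a geometrically finite hyperbolic structure on the complement with quasi-Fuchsian boundary. Re-gluing requires matching the conformal structures on the two sides of the cut, which is achieved by showing that the self-map of Teichm\"uller space given by the skinning map composed with the gluing involution has a fixed point. Producing the required contraction or fixed-point-theoretic control is the content of the \emph{skinning lemma}, and this is where I expect the first major obstacle to lie; Otal's exposition \cite{otal-geom} reorganizes this step around $\R$-trees, but the analytic heart is the same.

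In the fibered case the key input is the \emph{double limit theorem}: given two filling projective measured laminations $\lambda^\pm$ on $S$, any sequence of quasi-Fuchsian representations of $\pi_1(S)$ whose end invariants converge to $\lambda^\pm$ has a convergent subsequence up to conjugation. Applied to the iterates $(\phi^n)^* \rho_0$ for any basepoint $\rho_0$ in quasi-Fuchsian space, with $\lambda^\pm$ taken to be the stable and unstable laminations of the pseudo-Anosov $\phi$, this produces a $\phi^*$-invariant representation whose quotient by $\langle \phi \rangle$ yields the desired hyperbolic structure on the mapping torus. The main obstacle here is the double limit theorem itself, whose proof hinges on the compactness of pleated surface sweepouts and on controlling degenerations of measured laminations. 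Throughout, torus boundary components must be handled carefully: each incompressible boundary torus becomes a rank-two cusp, and the atoroidal hypothesis is precisely what rules out further essential tori that would obstruct hyperbolization. Since this is Thurston's geometrization theorem for Haken manifolds, I would invoke \cite{otal-geom, kapovich-book} as a black box rather than reproduce either exposition.
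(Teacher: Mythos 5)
The paper does not prove this theorem at all: it is quoted as a black-box citation to \cite{otal-geom,kapovich-book}, exactly as you propose to do in your final sentence. Your sketch of Thurston's argument (hierarchy plus skinning-map fixed point in the non-fibered case, double limit theorem for the fibered mapping-torus case) is a faithful outline of the proof in those references, so your treatment is essentially the same as the paper's.
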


\begin{corollary}\label{cor-pahakengenus}
	Let $\beta \in B_m$ be any pseudo-Anosov braid with Haken genus $h(\beta) > 1$. Then the closure $\widehat{\beta} \subset S^3$ is a hyperbolic link, provided
$\widehat{\beta}$ is not one of the following (see Remark~\ref{rmk-atoroidal}):
\begin{enumerate}[label=\normalfont(\arabic*)]
\item a torus knot,
\item a 2-component $T^2 \times I-$link,
\item a trivial 3-component link.
\end{enumerate}  \end{corollary}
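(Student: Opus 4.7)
The plan is to apply Thurston's hyperbolization (Theorem~\ref{thm-thurstonmonster}) to $M := S^3 \setminus \widehat{\beta}$, which reduces the corollary to verifying that $M$ is irreducible, Haken, and homotopically atoroidal. I would first observe that $M$ is irreducible: any splitting sphere for $\widehat{\beta}$ can be made transverse to $\TT$, and an innermost circle of intersection would give a reducing simple closed curve for the pseudo-Anosov mapping class $\phi$, a contradiction. For Hakenness, I would use the hypothesis $h(\beta) > 1$ to produce an essential surface $\Sigma \subset (S^1 \times D^2) \setminus \widehat{\beta}$ with $g(\Sigma) \geq 2$, whose boundary components are curves of the form $S^1 \times \{x_i\}$ which, by construction of the Heegaard decomposition, are meridians of the solid Heegaard torus $D^2 \times S^1$. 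Capping off $\Sigma$ by disjoint meridian disks in $D^2 \times S^1$ then produces a closed surface $\widehat{\Sigma} \subset M$ of the same genus $> 1$. Incompressibility of $\widehat{\Sigma}$ should follow from a routine innermost disk/outermost arc argument: any putative compressing disk can be isotoped across $\TT$ into one of the two pieces of the Heegaard splitting, where the incompressibility of $\Sigma$ in the (hyperbolic, hence $\partial$-incompressible) braid torus complement and the obvious inessentiality of curves on the capping disks in $D^2 \times S^1$ force a contradiction.

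For geometric atoroidality, I would take an incompressible non-peripheral torus $T \subset M$, minimize $|T \cap \TT|$, and apply Corollary~\ref{cor-classfn}. In the first case, $T$ lies entirely in the braid torus, which is hyperbolic, so $T$ is $\partial$-parallel there; parallelism to a torus neighborhood of a component of $\widehat{\beta}$ would make $T$ peripheral in $M$, while parallelism to $\TT$ exhibits $T$ as the boundary of a solid torus in $M$ (the $T^2 \times I$ collar glued to the solid Heegaard torus), hence compressible. In the second case, each $\gamma_i = T \cap \TT$ bounds a meridian disk of the solid Heegaard torus; taking an innermost such $\gamma_i$ on $\TT$ and simplifying interior intersections of the bounding disk $\Delta$ with $T$ by innermost circle moves (using incompressibility of $T$ together with irreducibility of $M$) yields a genuine compressing disk for $T$, forcing $\gamma_i$ to bound a subdisk of $T$; an isotopy through the resulting ball then reduces $|T \cap \TT|$, contradicting minimality.

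Finally, I would rule out the Seifert-fibered exceptions of Remark~\ref{rmk-atoroidal} that distinguish geometric from homotopic atoroidality. The trivial $3$-component link, the $2$-component $T^2 \times I$-link, and the torus knot are excluded by hypothesis; a $(p,q)$-torus link with $d > 1$ parallel components cannot arise either, because it contains an embedded essential cabling torus, contradicting the atoroidality just established. With $M$ confirmed as an irreducible Haken $3$-manifold that is both geometrically and homotopically atoroidal, Theorem~\ref{thm-thurstonmonster} produces a complete hyperbolic structure on the interior, so $\widehat{\beta}$ is hyperbolic. The main technical hurdle I anticipate is the outermost arc portion of the incompressibility and atoroidality arguments, where arcs (rather than circles) of intersection with $\TT$ must be eliminated without creating intersections with $\widehat{\beta}$; this ultimately reduces to $\partial$-incompressibility of $\TT$ inside the hyperbolic braid torus complement, which is the place where the pseudo-Anosov hypothesis on $\beta$ enters most crucially.
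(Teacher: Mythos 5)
The fatal gap is in your atoroidality argument, in case (2) of Corollary~\ref{cor-classfn}. You claim that when the curves $\gamma_i=T\cap\TT$ bound meridian disks of the solid Heegaard torus, an innermost-disk argument produces an isotopy reducing $|T\cap\TT|$, contradicting minimality. This is false. In minimal position the components of $T\cap(D^2\times S^1)$ are themselves meridian disks, so each $\gamma_i$ already bounds a disk \emph{on} $T$ (it is inessential on $T$ though essential on $\TT$); the only disk with boundary $\gamma_i$ and interior disjoint from $T$ that your innermost-circle surgeries can yield is a rel-boundary push-off of that meridian-disk component, and the ball it cobounds is either a trivial collar (pushing across it removes nothing) or has interior meeting $T$ (so no isotopy of $T$ is available). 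Moreover, the configuration genuinely occurs and cannot be isotoped away: if $\widehat{\beta}$ consists of $d\geq 2$ parallel copies of a torus knot $K$ --- which can be the closure of a pseudo-Anosov braid and is \emph{not} among the three excluded families --- the companion torus $\partial\nu(K)$ is essential, and since the braid-torus complement is hyperbolic it cannot be pushed off $\TT$, so it must meet the solid Heegaard torus in meridian disks. Your argument would declare such closures hyperbolic, which they are not. The paper disposes of case (2) in the only way available, and this is precisely where the hypothesis $h(\beta)>1$ does its work: the piece $T\cap\bigl((S^1\times D^2)\setminus\widehat{\beta}\bigr)$ is a surface of genus at most one with slope-zero boundary, i.e.\ an essential surface for $\beta$ in the sense of Definition~\ref{def-hg} of genus $\leq 1$, contradicting $h(\beta)>1$. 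Since your proof never invokes $h(\beta)>1$ in the atoroidality step, it proves a statement that is false.

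A secondary problem is your use of $h(\beta)>1$ to establish Hakenness. By Definition~\ref{def-hg}, $h(\beta)>1$ includes the case $h(\beta)=\infty$, i.e.\ no essential surface for $\beta$ exists at all, so the hypothesis does not furnish a genus $\geq 2$ essential surface to cap off; and even when one exists, capping its boundary with meridian disks of $D^2\times S^1$ need not produce an incompressible closed surface. Fortunately this detour is unnecessary: once irreducibility is known, a link complement (compact, orientable, nonempty toral boundary, not a solid torus --- the solid-torus case being covered by the torus-knot exclusion) is Haken by standard arguments, which is why the paper reduces the corollary to irreducibility and (geometric) atoroidality before invoking Theorem~\ref{thm-thurstonmonster} together with Remark~\ref{rmk-atoroidal}. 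Your irreducibility sketch and your case (1) of the torus argument are essentially the paper's; but the Hakenness step should be replaced by the standard fact, and case (2) must be handled via the Haken-genus hypothesis as above rather than by a minimality contradiction.
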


\begin{proof}
	By the Definition of Haken genus, Remark~\ref{rmk-atoroidal}, and Theorem~\ref{thm-thurstonmonster}, it suffices 
	to show that $S^3 \setminus \widehat{\beta}$ is irreducible and atoroidal. Recall that $S^3 \setminus \widehat{\beta} = (S^1 \times D^2 \setminus  \widehat{\beta}) \bigcup ( D^2 \times S^1 )$ is a Heegaard decomposition of  $S^3 \setminus \widehat{\beta}$ and $\TT=\partial D^2 \times S^1$ is the toral Heegaard surface.
	
	If $S^3 \setminus \widehat{\beta}$ is reducible, then there exists an embedded essential sphere $\Sigma_0$ in $S^3 \setminus \widehat{\beta}$. We assume that $\Sigma_0$ intersects
	$\TT$ minimally. In particular, $\Sigma_0\cap \TT$ consists of finitely many disjoint
	freely homotopic essential curves in $\TT$. Further,  $ (S^1 \times D^2 \setminus  \widehat{\beta}) \cap \Sigma_0$ must then consist of finitely many annuli. But the existence of any such annulus would force the automorphism $\phi$ associated to $ \widehat{\beta}$ to be reducible, violating the hypothesis that $\phi$ is pseudo-Anosov.
	
	Next, if $S^3 \setminus \widehat{\beta}$ is not atoroidal, then there exists an embedded essential torus $\Sigma_1$ in $S^3 \setminus \widehat{\beta}$ that is not parallel to a boundary component. We assume that $\Sigma_1$ intersects
	$\TT$ minimally so that $\Sigma_1\cap \TT$ consists of finitely many disjoint
	freely homotopic essential curves in $\TT$. Further,  $ ( D^2 \times S^1) \cap \Sigma_1$
	must then 
	\begin{enumerate}
	\item either consist of finitely many embedded essential annuli,
	\item or finitely many embedded meridinal disks.
	\end{enumerate} 
	In the former case, each of these annuli can be homotoped into 
	$\TT$ and subsequently into $(S^1 \times D^2 \setminus  \widehat{\beta}) $. Thus,
	minimality of $\Sigma_1\cap \TT$ forces this intersection to be empty. Finally, 
	hyperbolicity of $(S^1 \times D^2 \setminus  \widehat{\beta}) $ forces $\Sigma_1$ to be homotopic to a boundary component of $(S^1 \times D^2 \setminus  \widehat{\beta}) $.
	If this boundary component is $\TT$, then it is compressible in $S^3 \setminus \widehat{\beta}$ violating essentiality of $\Sigma_1$. Else it is homotopic to a boundary component of a neighborhood of $ \widehat{\beta}$ in $S^3$. But this contradicts the choice
	of $\Sigma_1$ which was chosen not to be parallel to a boundary component of $S^3 \setminus \widehat{\beta}$.
	In the latter case, $h(\beta)=1$. 
	
	Note that the above argument also rules out the case that $\widehat{\beta}$ is a $(p,q)-$torus link in the sense of Remark~\ref{rmk-atoroidal}. Indeed $(p,q)>1$ forces the existence of an incompressible $\Sigma_1$ (with boundary) of genus equal to one.
	
The above  facts force $S^3 \setminus \widehat{\beta}$ to be irreducible and atoroidal. 
	Theorem~\ref{thm-thurstonmonster} now furnishes the conclusion.
 \end{proof}

 \subsection{Genus one surfaces}\label{sec-g1} This subsection is aimed at understanding understanding genus one essential surfaces $\Sigma_1$ in the braid torus.
 We assume further that ${\beta}$ is pseudo-Anosov.
 By Corollary~\ref{cor-classfn}, either such a $\Sigma_1$ is embedded in the braid torus or each component of $\Sigma_1\cap \TT$ bounds a disk in the solid Heegaard torus. Since $\beta$ is pseudo-Anosov, the first alternative forces $\Sigma_1$ to be parallel to a boundary component of the braid torus, i.e.\ $\Sigma_1$ is either $\partial-$parallel
 or cusp-parallel in the sense of Definition~\ref{def-hg}. We therefore assume henceforth that each component of $\Sigma_1\cap \TT$ bounds a disk in the solid Heegaard torus. In particular, the genus of $\Sigma_1$ equals that of
 $\Sigma_1 \cap (S^1 \times D^2 \setminus \widehat{\beta})$. 
 
 Next, we cut the braid torus $(S^1 \times D^2) \setminus \widehat{\beta}$ along 
$ (\{p\} \times D^2 ) \setminus \widehat{\beta}$ for some $p \in S^1$ so as to get
the braid complement $\CC=([0,1]\times D^2) \setminus {\beta}$. Further, from the above description of $\Sigma_1\cap \TT$, we can assume that
\begin{enumerate}
	\item $\Sigma_1$ is transverse to $ (\{p\} \times D^2) \setminus \widehat{\beta}$ in the braid torus $(S^1 \times D^2) \setminus \widehat{\beta}$. 
\item Let $\Sigma_1^c$ denote $\Sigma_1 \cap (S^1 \times D^2) \setminus \widehat{\beta}$ cut open along $ (\{p\} \times D^2) \setminus \widehat{\beta}$ (and then completed in $\CC$). Then $\Sigma_1^c$  intersects the \emph{vertical  boundary} $\partial_v \CC= [0,1]\times \partial  D^2 $ in vertical lines of the form
$[0,1]\times \{z_i\}$, $i=1,\cdots, 2l$,
\item $\Sigma_1^c$  intersects the \emph{horizontal  boundary} $\partial_h \CC=\{0,1\}\times  D^2 $ in
finitely many pairs of disjoint chords $\alpha_1 \times \{0,1\}, \cdots, \alpha_l 
\times \{0,1\}$ (the same for the top and bottom components of $\partial_h \CC$, where $\alpha_1, \cdots, \alpha_l 
$ are chords in the disk avoiding the initial points of the braid $\beta$.
\item The standard height function $H: \CC \to [0,1]$ given by the projection onto the first coordinate of $\CC$ is a Morse function on $\Sigma_1^c$.
\end{enumerate}

Let $\SSS_i:=\alpha_i \times [0,1] \subset D^2 \times  [0,1] $ denote \emph{vertical rectangles} corresponding to chords $\alpha_i$. Then 
$\partial \Sigma_1^c=\bigcup_i \partial \SSS_i$, i.e.\ the boundary 
of $\Sigma_1^c$ consists of the disjoint union of the boundary of the vertical rectangles $\SSS_i$. Further, there exist 
\begin{enumerate}
\item finitely many disks $\DD_1, \cdots, \DD_j$ contained in the interior of $\bigcup_i  \SSS_i$, and
\item finitely many orientable surfaces $\AAA_1, \cdots, \AAA_k$, where each
$\AAA_i$ is homeomorphic to a genus $g$ surface, $g\geq 0$ with exactly \emph{two}
boundary components. We refer to the $\AAA_i$ as \emph{generalized annuli}.
\end{enumerate}
such that 
$$\Sigma = \large(\cup_i  \SSS_i \setminus (\cup_r \DD_r) \large) \bigcup (\cup_s \AAA_s),$$
i.e.\ $\Sigma_1^c$ is obtained from $\bigcup_i  \SSS_i$ by removing the interiors of
the disks $\DD_r$ and attaching the generalized annuli $\AAA_s$ along the boundaries of the
removed disks. We construct an auxiliary (undirected) graph $\GG:= \GG(\SSS,\AAA)$
as follows:
\begin{enumerate}
\item the vertex set $\VV(\GG)$ equals $\{\SSS_i\}$,
\item the edge set $\EE(\GG)$ equals $\{\AAA_s\}$,
\item the edge $\AAA_s$ connects vertices $\SSS_a, \SSS_b$ if the boundary circles
of $\AAA_s$ lie on $\SSS_a, \SSS_b$. (Note that we allow $\SSS_a= \SSS_b$.)
\end{enumerate}
\begin{lemma}\label{lem-g1} Let $\Sigma_1$ be an essential genus $1$ surface in a braid link. Then, with notation as above, the generalized annuli $\AAA_s$ used in constructing $\Sigma_1^c$  all have $g=0$, i.e.\ each $\AAA_s$ is an annulus. Further, $\GG$ has exactly one edge-loop, 
	i.e.\ $\pi_1(\GG)=\Z$.
 \end{lemma}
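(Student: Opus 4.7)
The plan is to compute $\chi(\Sigma_1^c)$ in two ways, combine with a connectivity argument to constrain $\sum_s g_s$, and then rule out positive genus via a $\pi_1$-identification together with the hyperbolicity of the mapping torus $T(\phi)$.

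From the decomposition $\Sigma_1^c = (\bigcup_i \SSS_i \setminus \bigcup_r \DD_r) \cup \bigcup_s \AAA_s$, each rectangle contributes $\chi = 1$, each of the $2E$ removed disks subtracts $1$, and each $\AAA_s$ contributes $\chi(\AAA_s) = -2g_s$, giving $\chi(\Sigma_1^c) = V - 2E - 2\sum_s g_s$. Taking $\Sigma_1$ to be a closed torus for concreteness, $\Sigma_1' := \Sigma_1 \cap (S^1 \times D^2 \setminus \widehat\beta)$ is $T^2$ minus $k = 2l$ open meridian disks (bounded by the circles $\gamma_i$), so $\chi(\Sigma_1') = -2l$; cutting along the $l$ arcs in $\{p\} \times D^2$ raises $\chi$ by $l$, yielding $\chi(\Sigma_1^c) = -l = -V$. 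Equating gives $E + \sum_s g_s = V$. A component-wise Euler characteristic count shows the total genus of $\Sigma_1^c$ equals its number of components; since cutting cannot increase total genus and $\Sigma_1$ has genus $1$, $\Sigma_1^c$ (and hence $\GG$) is connected, forcing $E \geq V - 1$ and $\sum_s g_s \leq 1$. The embedding of each $\AAA_s$ in $\Sigma_1 \cong T^2$ also forces $g_s \leq 1$.

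The main step is to rule out $g_s = 1$. Suppose $\AAA_s$ has $g_s = 1$. Then $\Sigma_1 \setminus \AAA_s$ consists of two disks $E_1, E_2 \subset \Sigma_1$, while $\hat\AAA_s := \AAA_s \cup \DD_{r_1} \cup \DD_{r_2}$ is a closed torus embedded in the interior of $\CC$, hence in $T(\phi)$. Both $\pi_1(\Sigma_1)$ and $\pi_1(\hat\AAA_s)$ are canonically the quotient $\pi_1(\AAA_s)/\langle \partial_1, \partial_2\rangle^{\mathrm{normal}} \cong \Z^2$; under this identification, both inclusions into $\pi_1(S^3 \setminus \widehat\beta)$ factor the common inclusion $\pi_1(\AAA_s) \to \pi_1(S^3 \setminus \widehat\beta)$. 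Incompressibility of $\Sigma_1$ thus forces $\pi_1$-injectivity of $\hat\AAA_s$ in $S^3\setminus\widehat\beta$, making $\hat\AAA_s$ incompressible in $S^3\setminus\widehat\beta$ and in $T(\phi)$. By hyperbolicity of $T(\phi)$, $\hat\AAA_s$ must be boundary-parallel there -- isotopic in $T(\phi)$ to either $\TT$ or to a cusp torus of $\widehat\beta$.

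By Waldhausen's theorem applied to the Haken manifold $S^3 \setminus \widehat\beta$, the two incompressible tori $\Sigma_1$ and $\hat\AAA_s$ carrying the same $\pi_1$-subgroup are isotopic in $S^3 \setminus \widehat\beta$. Hence $\Sigma_1$ is isotopic either to $\TT$ (which is compressible in $S^3 \setminus \widehat\beta$ via the solid Heegaard torus, contradicting incompressibility of $\Sigma_1$) or to a cusp torus (contradicting $\Sigma_1$'s cusp-nonparallelness). Either way a contradiction, so $g_s = 0$ for all $s$; then $E = V$ and connectivity of $\GG$ yields $\pi_1(\GG) \cong \Z$. The hardest step will be making the $\pi_1$-identification precise -- verifying that the two inclusions factor the same map through the common quotient -- and invoking Waldhausen's theorem with the correct hypotheses.
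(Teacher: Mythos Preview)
Your proof is correct but takes a substantially different route from the paper's. The paper argues that a twice-punctured torus $\AAA_s$ connecting two rectangles contributes to the genus of the glued-up surface $\Sigma_1 \cap ((S^1 \times D^2) \setminus \widehat\beta)$ in exactly the same way as two parallel annuli between the same rectangles would; so one simply \emph{replaces} any such $\AAA_s$ by two annuli (doubling that edge of $\GG$), after which all edge spaces are genuine annuli and the genus of the surface equals the rank of $\pi_1$ of the modified graph. The paper never actually rules out a genus-$1$ generalized annulus in the original decomposition --- it sidesteps the issue. Your argument, by contrast, proves the literal statement of the lemma: no $\AAA_s$ can have positive genus to begin with. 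Your Euler-characteristic bookkeeping together with the $\pi_1$-comparison between $\Sigma_1$ and the capped-off torus $\hat\AAA_s$ is a nice idea and yields this stronger conclusion.

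That said, your contradiction in the $g_s = 1$ case arrives much earlier than you realize, and the appeals to hyperbolicity of $T(\phi)$ and to Waldhausen's theorem are unnecessary. You build $\hat\AAA_s$ as a closed torus lying in $\CC = (I \times D^2) \setminus \beta$, and your $\pi_1$-identification shows that $\pi_1(\hat\AAA_s) \to \pi_1(S^3 \setminus \widehat\beta)$ is injective. But $\CC$ deformation-retracts to a punctured disk, so $\pi_1(\CC)$ is free; hence $\Z^2 = \pi_1(\hat\AAA_s)$ cannot inject into $\pi_1(\CC)$. Since the inclusion $\hat\AAA_s \hookrightarrow S^3 \setminus \widehat\beta$ factors through $\CC$, this already contradicts the injectivity you just established. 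The argument can end right there --- no need to pass to $T(\phi)$, invoke boundary-parallelism of incompressible tori in hyperbolic manifolds, or apply Waldhausen.
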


\begin{proof} First, observe that $\GG$ encodes a graph of spaces description of 
	$\Sigma_1^c$, and hence a graph of groups 
	description of 
	$\pi_1(\Sigma_1^c)$, where
	\begin{enumerate}
		\item the vertex groups are $\pi_1(\SSS_i \setminus (\cup_r \DD_r))$,
	\item the edge groups are $\pi_1(\AAA_i)$.
	\end{enumerate} 
	Since $\Sigma_1^c$ has genus one, we can assume without loss of generality that each $\AAA_i$ is an annulus, so that 
	$\pi_1(\AAA_i)=\Z$. (Note that the only other possibility necessitates that $\AAA_1$ is  a twice-punctured torus connecting two squares $\SSS_1, \SSS_2$. This  is equivalent to having two disjoint annuli between $\SSS_1$ and $\SSS_2$. The associated graph of spaces has two vertices corresponding to $\SSS_1$ and $\SSS_2$ and two edges between them. We assume henceforth  that 
	such a twice-punctured torus is replaced by two annuli.)
	
	Next, we  glue the top and bottom of $\CC$ together to get back the braid torus. Gluing the horizontal boundary chords of $\Sigma_1^c$ back together, we also get back $\Sigma_1 \cap (S^1 \times D^2 \setminus \widehat{\beta})$.
	Note that each $\SSS_i \setminus (\cup_r \DD_r)$ now glues up to give an annulus
$\widehat{\SSS_i} \setminus (\cup_r \DD_r)$ 	with holes corresponding to $\DD_r$.
	Thus, $\Sigma_1 \cap (S^1 \times D^2 \setminus \widehat{\beta})$ admits a graph of spaces description, with the same underlying graph $\GG$, where
	\begin{enumerate}
	\item the vertex spaces are  $\widehat{\SSS_i} \setminus (\cup_r \DD_r)$,
	\item the edge spaces are the annuli  $\AAA_i$.
	\end{enumerate} 
	
	We next note that the genus of $\Sigma_1 \cap (S^1 \times D^2 \setminus \widehat{\beta})$ equals the genus of $\GG$, where the genus of $\GG$ equals the rank of its (free) fundamental group.
	Since the genus of $\Sigma_1 \cap (S^1 \times D^2 \setminus \widehat{\beta})$
	equals one, the conclusion follows.
\end{proof}

\subsection{Increasing genus}\label{sec-incgenus}
Theorem~\ref{thm-incgenus} below is the main output of this section.
We shall need some basic terminology. Let $\CC=(I \times D^2) \setminus \beta$ denote the braid complement as before.

\begin{definition}\label{def-saddle}
A surface $\FF \subset \CC$ is called
an \emph{elementary saddle} if there exist 4  cyclically ordered distinct points
$p_1,p_2,p_3,p_4 \in \partial D^2$, and a quadrilateral with sides $[p_1,p_2]$, $[p_2,p_3]$, $[p_3,p_4]$, $[p_4,p_1]
\subset D^2$ such that
\begin{enumerate}
	\item $\FF$ is homeomorphic to a closed disk.
	\item the boundary $\partial \FF$ consists of 
	\begin{itemize}
	\item the vertical arcs $\cup_i I \times \{p_i\} $,
	\item the horizontal arcs $\{0\} \times [p_1,p_2], \{0\} \times [p_3,p_4]$ on
	$\{0\} \times D^2$,
	\item the horizontal arcs $\{1\} \times [p_2,p_3]$, $\{1\} \times [p_4,p_1]$ on
	$\{1\} \times D^2$.
	\end{itemize}
	\item the height function  $H: \CC \to [0,1]$ given by projection to the first coordinate is a Morse function with a single saddle singularity.
\end{enumerate}

A surface $\FF \subset \CC$ is called
a \emph{trivial rectangle} if there exists a pair of distinct points $p,q$ and an arc $[p,q] \subset D^2$ such that $\FF$ is isotopic rel.\ boundary to $I \times [p,q]$.
\end{definition} 

\begin{figure}[h]
	\centering
	\includegraphics[scale=0.45]{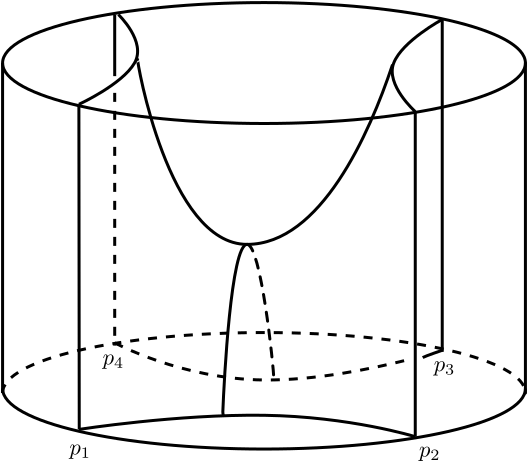}
	\caption{An elementary saddle}
	\label{fig-saddle}
\end{figure}

\begin{theorem}\label{thm-incgenus} For any pseudo-Anosov braid $\beta$, $h(\beta^k) > 1$ for $k>4$.\end{theorem}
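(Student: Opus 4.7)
The plan is to argue by contradiction. Suppose there exists an essential surface $\Sigma \subset (S^1 \times D^2) \setminus \widehat{\beta^k}$ with $g(\Sigma) \le 1$ and $k > 4$. I would first dispose of the genus-zero case: since $\beta^k$ is again pseudo-Anosov, the argument of Corollary~\ref{cor-pahakengenus} shows $S^3 \setminus \widehat{\beta^k}$ is irreducible, so capping off $\partial \Sigma$ by meridian disks in the solid Heegaard torus produces a sphere that must bound a ball, forcing $\Sigma$ to be boundary-parallel or compressible, contradicting essentiality.

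For the main case $g(\Sigma) = 1$, put $\Sigma$ in minimal position with respect to $\TT$ and the fiber $\{p\} \times D^2$, and cut along the latter to obtain $\Sigma^c \subset \CC = (I \times D^2) \setminus \beta^k$. By Lemma~\ref{lem-g1}, $\Sigma^c$ decomposes via a graph $\GG$ with exactly one cycle, with $l$ vertical rectangles $\SSS_i = \alpha_i \times [0,1]$ (minus holes) as vertices and $l$ annuli $\AAA_s$ as edges. After perturbing the height function $H$ to be Morse on $\Sigma^c$, each $\AAA_s$ further breaks up as a stack of trivial rectangles separated by elementary saddles in the sense of Definition~\ref{def-saddle}. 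The slice $\Sigma^c \cap (\{t\} \times D^2)$ is then a family of arcs in $D^2$ avoiding the braid at time $t$, evolving by ambient isotopy between regular heights and by a single elementary swap at each saddle.

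The heart of the argument is dynamical. At $t = 1$ the slice must again be $\{\alpha_1, \ldots, \alpha_l\}$, yet the ambient isotopy of $D^2$ induced by the braid drags these arcs to the $\beta^k$-image $\{\beta^k(\alpha_j)\}$; hence the elementary saddles in $\Sigma^c$ together factor the mapping-class action of $\beta^k$ on the initial arc system as a sequence of bounded-complexity moves. Because $\beta$ is pseudo-Anosov, such a combinatorial factorization of $\beta^k$ must involve at least linearly many moves in $k$ --- one concrete way to see this is via the positivity of the translation length of $\beta$ on the arc complex of the punctured disk, together with the fact that each elementary saddle contributes at most a uniformly bounded amount. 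The assumption $g(\Sigma) = 1$ (i.e.\ the single cycle in $\GG$), together with the fact that each $\AAA_s$ is a topological annulus, simultaneously constrains the number of \emph{effective} elementary saddles in $\Sigma^c$ (those which genuinely change the isotopy class of the arc system, as opposed to cancelling Morse pairs internal to a single $\AAA_s$) to a quantity that does \emph{not} grow with $k$. For $k > 4$ the two estimates become incompatible, yielding the contradiction.

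The main obstacle will be the sharpness of the threshold. Extracting the specific value $k > 4$, rather than some larger constant depending on the pseudo-Anosov data of $\beta$, will require careful bookkeeping: one must quarantine elementary saddles that cancel via local isotopy inside a single annulus $\AAA_s$, use the rigidity imposed by $\pi_1(\GG) = \Z$ to show that the number of effective reconfigurations is tightly bounded, and argue uniformly over pseudo-Anosov $\beta$ by invoking only the existence --- not the magnitude --- of positive translation length on the arc complex.
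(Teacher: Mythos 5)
Your setup (cutting the braid torus open along a fiber disk, the graph decomposition of Lemma~\ref{lem-g1}, and the normal form of each piece as trivial rectangles and elementary saddles) matches the paper, but the ``heart'' of your argument is where it fails. Your plan is to lower-bound the number of saddles dynamically, via positive translation length of $\beta$ on the arc complex, and to upper-bound the number of \emph{effective} saddles by a quantity ``not growing with $k$'' coming from the genus-one hypothesis. Two problems. First, the upper bound is not what you need: for a genus-one surface meeting the Heegaard torus in $2l$ curves, the minimal number of index-$1$ critical points of the height function is governed by $-\chi \approx 2l$, and $l$ is not bounded in terms of $\beta$ or $k$; your dynamical lower bound (displacement of the arc system under $\beta^k$ divided by a uniform per-saddle displacement) does not scale with $l$, so when $l$ is large the two estimates simply never collide and no contradiction results. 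Second, even where they would collide, the mere \emph{existence} of positive translation length can only give ``$h(\beta^k)>1$ for all $k$ sufficiently large depending on $\beta$''; the uniform threshold $k>4$ requires the magnitude, which you correctly admit you cannot invoke. This is not a bookkeeping issue to be fixed later --- the proposed mechanism cannot produce a $\beta$-independent constant.

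The missing idea, which is how the paper gets a clean and uniform count with no arc-complex dynamics, is the exclusion of trivial rectangles: a trivial rectangle inside a block $\QQ_i=([i-1,i]\times D^2)\setminus\beta$ would separate the strands of $\beta$ into two nonempty clusters, forcing $\widehat{\beta}$ to be a split link, impossible since $\beta$ is pseudo-Anosov. Consequently every one of the $l$ horizontal arcs in \emph{each} of the $k$ blocks must feed into a saddle, giving at least $l/2$ elementary saddles per block, hence at least $k\cdot l/2$ index-$1$ critical points on $\Sigma\cap\BB_k$ (and no maxima or minima, by minimality of the Morse function). Poincar\'e--Hopf then gives $\chi(\Sigma\cap\BB_k)\le -k\,l/2$, and capping off the $2l$ boundary circles yields $\chi \le 2l - k\,l/2 < 0$ once $k>4$, contradicting genus $\le 1$. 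Note that this lower bound scales with $l$, which is exactly what makes the comparison close for every $l$, and the constant $4$ is just the ratio $2l/(l/2)$. (For the genus-zero case the paper instead pushes an essential sphere down the branched cover $M_k\to M$ and applies the sphere theorem; your capping argument is plausible but would need care about which side of the capped sphere bounds the ball and why that forces $\Sigma$ to be inessential.)
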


\begin{proof} We first note that the second paragraph of the proof of Corollary \ref{cor-pahakengenus} in fact shows that $M=S^3 \setminus \widehat{\beta}$ is 
	irreducible. (The hypothesis $h(\beta) >1$ in Corollary \ref{cor-pahakengenus} was not used for the proof of this fact.)
Hence $h(\beta) \neq  0$. 
	
We observe that the complement $M_k$ in $S^3$ of the closure $\widehat{\beta^k}$ is obtained as follows. Let $M=S^3\setminus \widehat{\beta}$ as before. Let $\HH$ denote the solid Heegaard torus, and let $c \subset \HH$ denote the core curve of $\HH$. Then  $M_k$ is a $k-$fold branched cover of $M$ branched over $c$.

For concreteness, we show first that an essential embedded torus in $M$ pulls back to an essential surface of genus greater than one in $M_k$. (Strictly speaking, the
argument in this paragraph is not logically necessary for the rest of the proof, but it provides an easy first case.)
 Let $\Sigma_1$ denote the essential genus one surface in  $M$. 
Recall the structure of $\Sigma_1$ by Lemma~\ref{lem-g1}. Recall also that $\Sigma_1$ intersects the solid Heegaard torus
of $M$ in finitely many disks 	$\DD_1, \cdots, \DD_j$.
Let $\Sigma_1^k$ denote its pullback to $M_k$. Then $\Sigma_1^k$ intersects the solid Heegaard torus
of $M_k$ in finitely many disks 	$\DD_1', \cdots, \DD_j'$, where $\DD_r'$ is a $k-$fold branched cover of $\DD_r$ over the origin. Then $\Sigma_1^k$ has genus greater than one.
If possible, suppose that $\Sigma_1^k$ is compressible, and $\gamma \subset \Sigma_1^k$ is an essential closed curve that is contractible in $M_k$. Then 
 $\gamma$ can be isotoped slightly to be disjoint from the branch locus, forcing
 the image of $\gamma$ in $M$ to  be contractible. This contradicts the fact that
 $\Sigma_1$ is essential.
 
 It remains to show that there is no other essential surface in $M_k$ of genus
 zero or one. If $\Sigma_0$ is an essential sphere in $M_k$, its image, under the branched cover map $\Pi$ to $M$, would give an immersed essential sphere 
 $\Pi(\Sigma_0)$ in $M$. Hence by the
 sphere theorem, $M$ contains an essential embedded sphere, forcing 
 $h(\beta)=0$ and contradicting that $\beta$ is pseudo-Anosov as in the proof of
 Corollary \ref{cor-pahakengenus}.
 
 If $\Sigma_0'$ is an essential torus in $M_k$, its image  $\Pi(\Sigma_0')$ is an immersed essential torus in $M$. Let $\BB = (S^1 \times D^2) \setminus \widehat{\beta} \subset M$ denote the complement of the braid $\beta$ within the braid torus.   Let $\BB_k$ denote its lift
  to $M_k$,
 so that $\BB_k$ is a  $k-$fold regular cover of $\BB$. Note that $\BB_k$ is the complement of the braid $\beta^k$ within the braid torus of $\beta^k$. {We assume $\Sigma_0'$ intersects the boundary of the braid torus $\partial \BB_k$ (i.e., the Heegaard torus) transversely and minimally. By Corollary \ref{cor-classfn}, either $\Sigma_0'$ is contained entirely inside the braid torus or intersects the Heegaard torus in a collection of circles which bound meridinal disks in the solid Heegaard torus. The first scenario is impossible, as $\mathcal{B}_k$ is hyperbolic and therefore atoroidal, since $\beta$ is pseudo-Anosov.}

 Let $\CC_k=([0,k] \times D^2) \setminus \widehat{\beta^k}$ denote $\BB_k$ cut open along a disk. Let 
  $H_k: \CC_k \to [0,k]$ denote the associated height function. {Let 
  $\Sigma^{cc}_0 \subset \CC_k$  denote $\Sigma_0' \cap \BB_k$ cut along the disk}. We assume that 
 \begin{enumerate}
 \item $H_k$ is a Morse function when restricted to $\Sigma^{cc}_0$,
 \item  the total number of critical points with respect to $H_k$ is the minimum possible in the isotopy class of $\Sigma^{cc}_0$ rel. boundary.  In particular,
 $H_k$ has no maxima or minima.
 \end{enumerate}
 Since $\BB_k$ is a  $k-$fold regular cover of $\BB$, we assume that $\CC_k$ is
 a vertical concatenation of $k$ copies of $\CC$. Let $\QQ_i:=(D^2 \times  [i-1,i])\setminus \beta$ denote the $i-$th block of $\CC$ in $\CC_k$, $i=1,\cdots,k$.
Let $\FF_i = \Sigma^{cc}_0 \cap \QQ_i$. 
By the discussion in
 Section~\ref{sec-g1} 
  we can assume that there exists $l >0$ and points $p_1, \cdots, p_{2l}$ such that 
  \begin{enumerate}
  \item the vertical boundary of $\FF_i$ equals  $[i-1,i] \times \{p_1, \cdots, p_{2l}\} $
  for all $i$,
  \item the horizontal boundary of $\FF_i$ restricted to $\{i\} \times D^2$ consists of $l$ disjoint arcs $\alpha_{i1}, \cdots, \alpha_{il}$ in  $\{i\} \times D^2$ pairing  points in $\{i\} \times \{p_1, \cdots, p_{2l}\} $.
  \end{enumerate}
  
  We next observe that $\FF_i$ cannot contain a trivial rectangle, else the trivial rectangle would separate strands of the braid $\beta$ into two non-empty clusters,
  forcing $\widehat{\beta}$ to be a split  link, violating the hypothesis that 
 $\beta$ is a  pseudo-Anosov braid. 
 Hence,   $\alpha_{i1}, \cdots, \alpha_{il}$ contribute at least $l/2$ elementary saddles in $\FF_i$, i.e.\ $\FF_i$ has at least $l/2$ critical
 points of index $1$. Recall that $H_k$ restricted to $\FF_i$ has no maxima
 or minima.
 
 Hence, the total number of index $1$ critical points on $\Sigma^{cc}_0$ is at least
  $k \cdot l/2$. {Hence, gluing $\Sigma^{cc}_0$ back along its top and bottom boundaries, we see $\Sigma_0' \cap \mathcal{B}_k$ has a circle-valued Morse function} with
  at least
  $k \cdot l/2$ index $1$ critical points, and no index $0$ or index $2$ critical points. Moreover, the circle-valued Morse function is identity restricted to the $2l$ boundary circles of $\Sigma_0' \cap \mathcal{B}_k$. Therefore, by the Poincare-Hopf index theorem,
$$\chi(\Sigma_0' \cap \mathcal{B}_k) \leq -k \cdot \frac{l}{2}$$
Note that $\Sigma_0'$ is obtained from $\Sigma_0' \cap \mathcal{B}_k$ by capping off the $2l$ boundary circles by disks. Thus, $\chi(\Sigma_0') = \chi(\Sigma_0' \cap \mathcal{B}_k) + 2l$. Combining with the above, we get,
$$\chi(\Sigma_0') \leq 2l - k \cdot \frac{l}{2}$$
Hence, for $k>4$, we must have $\chi(\Sigma_0') <0$. {Therefore, $\Sigma_0'$ has genus greater than $1$. Hence $\beta^k$ has Haken genus greater than $1$. This proves the theorem.}
\end{proof}

\begin{remark}\label{rmk-guaranteeingpahyp}
We summarize some conditions to ensure that a braid has hyperbolic braid closure.
From Corollary~\ref{cor-pahakengenus} and Theorem~\ref{thm-incgenus} we have the following. Let $\beta$ be a pseudo-Anosov braid. Then $\widehat{\beta^k}$ is hyperbolic for $k>4$, provided $\widehat{\beta^k}$ is not one of the following:
	\begin{enumerate}
		\item a torus knot,
		\item a 2-component $T^2 \times I-$link,
		\item a trivial 3-component link.
	\end{enumerate}  
\end{remark}

We refer to \cite{ito} for the notion of Dehornoy floor $[\beta]_D$ of a braid $\beta$.
In \cite[Theorem 2]{ito}, Ito shows the following:
\begin{theorem}\cite{ito}\label{thm-ito}
Let $\beta \in B_m$ be a braid whose closure is a knot satisfying the condition that its Dehornoy floor $[\beta]_D$ satisfies $[\beta]_D\geq 3$. Then $\widehat{\beta}$ is hyperbolic if and only if $\beta$ is pseudo-Anosov.
\end{theorem}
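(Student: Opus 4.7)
The plan is to prove both directions separately, using the Nielsen--Thurston classification of surface homeomorphisms together with the apparatus developed earlier in the paper.

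For the forward direction --- pseudo-Anosov implies hyperbolic --- I would invoke Corollary~\ref{cor-pahakengenus} together with Theorem~\ref{thm-incgenus}, reducing the problem to verifying that the Dehornoy floor condition $[\beta]_D \geq 3$ rules out the three exceptional cases listed in Remark~\ref{rmk-atoroidal}. Since $\widehat{\beta}$ is a knot by hypothesis, the two- and three-component link cases are immediate. For torus knots, the closure arises only from periodic (or reducible) braids. Periodic braids in $B_m$ are conjugate into the cyclic subgroup generated by a root of the full twist $\Delta^2$, and standard estimates for the Dehornoy ordering show that such elements have Dehornoy floor at most $1$. Thus $[\beta]_D \geq 3$ excludes the torus knot case.

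For the converse I would argue by contrapositive. If $\beta$ is not pseudo-Anosov, then by Nielsen--Thurston it is either periodic or reducible. The periodic case is already ruled out as above (and also directly, since torus knots are non-hyperbolic). In the reducible case, the canonical reducing multicurve $C$ in the $m$-punctured disk is invariant under $\phi$, and its mapping torus produces an essential embedded torus $T$ inside the braid-torus complement $(S^1 \times D^2) \setminus \widehat{\beta}$. Once one shows that $T$ remains essential in $S^3 \setminus \widehat{\beta}$, the presence of an incompressible non-boundary-parallel torus prevents hyperbolicity.

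The main obstacle lies in this last step: showing that the reducing torus $T$ survives as incompressible once the solid Heegaard torus $D^2 \times S^1$ is glued back. By Corollary~\ref{cor-classfn} applied to $T$ in place of $\Sigma$, a compression can only occur if $T$ intersects the Heegaard torus $\TT$ along curves bounding disks in the solid Heegaard torus; equivalently, the reducing curve $C$ in the punctured disk must bound a subdisk containing few punctures or be peripheral. The plan is to use the compatibility between the Dehornoy ordering and the action of the braid group on the curve graph of the punctured disk to show that if $[\beta]_D \geq 3$, then any $\phi$-invariant essential multicurve $C$ must be ``deep'' in the disk --- it cannot be peripheral nor be a curve enclosing only one puncture. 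The hardest part of the argument is making this combinatorial-to-topological translation quantitatively sharp, with the threshold $[\beta]_D \geq 3$ being exactly the bound needed to obstruct the compression; this is the technical heart of Ito's proof in \cite{ito}.
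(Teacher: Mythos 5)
First, note that the paper does not prove this statement at all: Theorem~\ref{thm-ito} is quoted verbatim from Ito's work \cite{ito} and used as a black box, so there is no internal proof to compare against. Judged on its own, your proposal has genuine gaps. In the forward direction you cannot simply ``invoke Corollary~\ref{cor-pahakengenus} together with Theorem~\ref{thm-incgenus}'': Corollary~\ref{cor-pahakengenus} has the hypothesis $h(\beta)>1$, which is not implied by $[\beta]_D\geq 3$, and Theorem~\ref{thm-incgenus} only concerns high powers $\beta^k$, $k>4$, which is not the situation here. The whole content of Ito's theorem is that the Dehornoy floor hypothesis replaces such surface-genus hypotheses, and that replacement is exactly what your sketch does not supply. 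Worse, your mechanism for excluding the torus-knot case is based on a false claim: it is not true that torus knot closures arise only from periodic or reducible braids --- the paper's own Remark~\ref{rmk-atoroidal} exhibits the pseudo-Anosov braid $\sigma_1^3\sigma_2^{-1}\in B_3$ whose closure is a torus knot. Likewise, the assertion that periodic braids have Dehornoy floor at most $1$ is false: $(\sigma_1\sigma_2)^{3k+1}=\Delta^{2k}\sigma_1\sigma_2$ in $B_3$ is periodic, closes to a torus knot, and has Dehornoy floor roughly $k$, which is unbounded. So the step where $[\beta]_D\geq 3$ is supposed to kill the exceptional cases of Corollary~\ref{cor-pahakengenus} is not established.

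In the converse direction you correctly identify the crux --- that a $\phi$-invariant reducing multicurve yields a torus in the braid-torus complement which must be shown to remain incompressible and non-peripheral after gluing in the solid Heegaard torus --- but you then explicitly defer it (``this is the technical heart of Ito's proof''), together with an unproved quantitative link between the Dehornoy ordering and invariant multicurves. Since both directions ultimately hinge on precisely the input you leave unproved (Ito's argument runs through Birman--Menasco braid-foliation techniques controlling how essential spheres, annuli and tori meet the braid axis in terms of the Dehornoy floor, not through the Haken-genus or Corollary~\ref{cor-classfn} machinery of this paper), the proposal is an outline with incorrect supporting claims rather than a proof; for the purposes of this paper the correct move is simply to cite \cite{ito}, as the authors do.
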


\begin{remark}[Genericity] \label{rmk-generic} In \cite{caruso1,caruso2}, Caruso and Wiest proved the genericity of  pseudo-Anosov  elements of braid groups.
	More precisely, equip the braid group $B_m$ ($m\geq 3$) with Garside’s generating set, and let $\Gamma$ be the resulting Cayley graph. Then it is shown in \cite{caruso2} that the proportion of pseudo-Anosov braids in the ball of radius $r$ in $\Gamma$ tends to $1$ exponentially fast as $r\to \infty$. 
	
	 Remark~\ref{rmk-guaranteeingpahyp} and Theorem~\ref{thm-ito} now furnish two different ways of upgrading a pseudo-Anosov braid $\beta$ to a hyperbolic link  in $S^3$.
\end{remark}

\section{Proof of Theorem \ref{thm-main}}\label{sec-pfmain}

\subsection{Construction of the manifolds}\label{subsec-mfds}
We begin by constructing the required $3$-manifolds.   Let $\beta \in B_m$ 
be a braid on $m$ strands. Let $c_+$ and $c_-$ respectively denote the number of positive and negative crossings of $\beta$. 
\begin{hyp}\label{hyp-braids}
Let us suppose $\beta \in B_m$ is a braid such that,
\begin{enumerate}
	\item The closure $\widehat{\beta}$ is a hyperbolic \emph{knot}, 
	\item $c_+ - 2 c_- - m \geq 1$,
	{\item $c_+ + c_- \equiv m + 1 \pmod{2}$.}
\end{enumerate}
\end{hyp}

\begin{remark} {The third condition of Hypothesis~\ref{hyp-braids} is only logically necessary in Section \ref{subsec-hprincfail} to guarantee that the non-contactomorphic tight contact structures on $M_n$ we will construct in Section \ref{subsec-cntstr} are homotopic as $2$-plane distributions. In particular, the first two conditions of Hypothesis~\ref{hyp-braids} are sufficient to ensure all but the third conclusion in Theorem \ref{thm-main}.}
\end{remark}

\noindent {\bf Satisfying Hypothesis~\ref{hyp-braids}:}\\
{Theorem~\ref{thm-incgenus} and Corollary~\ref{cor-pahakengenus} together furnish a large family of examples of 
	 $\beta \in B_m$  such that $\widehat{\beta}$ is a hyperbolic knot using the notion of Haken genus (see Remark~\ref{rmk-guaranteeingpahyp}). Theorem~\ref{thm-ito} uses the notion of Dehornoy floor to give a different sufficient condition.
Hence, braids satisfying the first condition above  are furnished by the discussion at the end of Section \ref{sec-braid}. In particular, Remark~\ref{rmk-generic} shows that such $\widehat{\beta}$ may be obtained from generic braids.} 

The second condition is much easier to satisfy, once a pseudo-Anosov braid $\beta$ has been fixed. Let $\Delta$ denote the Garside element in the braid group $B_m$ given by 
	$$\Delta:=(\sigma_1\sigma_2 \cdots \sigma_{m-1})(\sigma_1\sigma_2\cdots \sigma_{m-2})\cdots  (\sigma_1\sigma_2)(\sigma_1).$$  Then $\Delta^2$ generates the infinite cyclic center of $B_m$ and has only \emph{positive crossings}. If $\beta$ is pseudo-Anosov, then so is $\Delta^{2\ell}\beta$ for all $\ell \in \Z$. Next, let 
	$\widehat{\Delta^{2\ell}\beta}$ denote the closure of  $\Delta^{2\ell}\beta$ in $S^3$. Then
$S^3 \setminus \widehat{\Delta^{2\ell}\beta}$ is obtained from $(S^1 \times D^2)\setminus
 \widehat{\beta}$ by attaching a disk to the $(1,\ell)-$curve on the boundary $S^1 \times S^1$ of $S^1 \times D^2$. Choosing $\ell>0$ large enough,
the second condition is also satisfied by Thurston's hyperbolic Dehn surgery \cite[Chapter 4]{thurston-book}. 

{For the third condition, we observe that $c_+ + c_-$ is the total number of crossings (or equivalently, length of the braid word) of $\beta$.  This can be ensured, for instance, (together with the first and third conditions) as follows: let $\beta$ be a braid on $m$ strands such that $m$ is odd, and $\beta^2$ is a knot. That is to say, the permutation corresponding to $\beta$ squares to an $m$-cycle. Then the total number of crossings of $\beta^2$ is even. Hence, it agrees with the parity of $m+1$. Thus, $\Delta^{2\ell} \beta^2$ satisfies all three conditions provided $\ell > 0$ is sufficiently large, as in the previous paragraph.} \\

\noindent {\bf Constructing hyperbolic homology spheres:}\\
Let  $M_n = S^3_{1/n}(\widehat{\beta})$ denote the $1/n$-Dehn filling of $S^3 \setminus \widehat{\beta}$. Here, we use the convention that  $1/n$-Dehn filling attaches a disk to $\mu + n\lambda$, where $\mu$ is the meridian of the knot $\widehat{\beta}$, and $\lambda$ is a longitude given by pushing the knot off along a Seifert surface.
This is an integer homology $3$-sphere, and by Thurston's hyperbolic Dehn surgery theorem \cite[Chapter 4]{thurston-book}, $M_n$ is a closed hyperbolic $3$-manifold for $n > 0$ sufficiently large.\\

We now construct a smooth $4$-dimensional handlebody $(X_n, \partial X_n)$ such that $\partial X_n \cong M_n$. Let $K := \widehat{\beta} \subset S^3$ and let $U \subset S^3$ denote the meridian of $K$. By the slam-dunk operation \cite[Section 5.3., p. 163]{gosti-99}, we may rewrite the surgery description of $M_n$ as a surgery on the link $K \cup U \subset S^3$ with surgery coefficients $0$ and $-n$ on $K$ and $U$ respectively. By treating this integral surgery diagram as a Kirby diagram, we obtain a $4$-dimensional handlebody $(X_n, \partial X_n)$ by attaching a pair of $2$-handles $h^2_K, h^2_U$ to $D^4$ along $K$ and $U$, respectively, with framing as in the surgery coefficients.

\subsection{Construction of the contact structures}\label{subsec-cntstr}
The aim of this section is to prove:
\begin{theorem}\label{thm-weinsteinstr}
Let $(X_n, \partial X_n)$  be the 4-dimensional handlebodies with boundary constructed in Section~\ref{subsec-mfds}. Then  $(X_n, \partial X_n)$ admit Weinstein structures.
\end{theorem}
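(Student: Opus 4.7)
The plan is to equip $X_n$ with a Weinstein structure by realizing the attaching circles $K$ and $U$ of its two $2$-handles as Legendrian knots in $(S^3, \xi_{\mathrm{std}})$ with matching contact framings. Recall that a Weinstein $2$-handle attached to a Weinstein domain along a null-homologous Legendrian knot $L$ induces the smooth framing $\mathrm{tb}(L) - 1$ relative to the Seifert framing. Since $h^2_K$ carries smooth framing $0$ and $h^2_U$ carries smooth framing $-n$, it suffices to exhibit disjoint Legendrian representatives $L_K, L_U \subset (S^3, \xi_{\mathrm{std}})$ of $K$ and $U$ with $\mathrm{tb}(L_K) = 1$, $\mathrm{tb}(L_U) = 1-n$, and $\mathrm{lk}(L_K, L_U) = 1$.

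First I handle $L_K$. Starting from the braid word for $\beta$, I build a Legendrian front for $\widehat{\beta}$ as follows: each positive generator $\sigma_i$ is drawn as a Legendrian front crossing; each $\sigma_i^{-1}$ is resolved into a positive crossing flanked by a pair of cusps; and the braid is closed off by planar arcs carrying additional cusps. A careful count of writhe and cusps then produces a Legendrian representative $L_K$ of $\widehat{\beta}$ with
\[
\mathrm{tb}(L_K) \;=\; c_+ - 2c_- - m.
\]
By condition (2) of Hypothesis~\ref{hyp-braids}, this is at least $1$. Applying $\mathrm{tb}(L_K) - 1$ negative stabilizations, which preserves the smooth isotopy class while reducing $\mathrm{tb}$ by one each time, I obtain a Legendrian representative with $\mathrm{tb}(L_K) = 1$ exactly.

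Next I handle $L_U$. The unknot in $(S^3, \xi_{\mathrm{std}})$ admits Legendrian representatives with every integer $\mathrm{tb} \leq -1$; starting from the standard Legendrian unknot of $\mathrm{tb} = -1$ and applying $n-2$ negative stabilizations (permissible as we may assume $n \geq 2$, since Thurston's hyperbolic Dehn surgery requires $n$ large), I obtain a Legendrian unknot of $\mathrm{tb} = 1 - n$. I place this representative as a small Legendrian meridional loop in a Darboux chart centered at a point of the already-fixed $L_K$, so that $L_K$ and $L_U$ are disjoint with $\mathrm{lk}(L_K, L_U) = 1$. Attaching Weinstein $2$-handles along $L_K$ and $L_U$ to the standard Weinstein ball $(D^4, \omega_{\mathrm{std}}, \phi_{\mathrm{std}})$, via the handle attachment construction of Weinstein, then produces a Weinstein $4$-manifold whose underlying smooth Kirby data matches that of $(X_n, \partial X_n)$.

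The main technical step is justifying the formula $\mathrm{tb}(L_K) = c_+ - 2c_- - m$ for the Legendrian front obtained from the braid; this is a combinatorial accounting of writhe and cusps in the front, but requires choosing the resolution conventions carefully to ensure the cusps from negative crossings and from the closure arcs are counted consistently. Once this count is in hand, all remaining steps—stabilization control, placement of the Legendrian meridian, and Weinstein handle attachment—are standard.
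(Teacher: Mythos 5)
Your proof is correct and follows essentially the same route as the paper: a Legendrian front for $\widehat{\beta}$ with $\mathrm{tb} = c_+ - 2c_- - m \geq 1$ (negative crossings and closure arcs each contributing a pair of cusps), stabilized down to $\mathrm{tb}=1$, together with a Legendrian meridian unknot of $\mathrm{tb} = 1-n$, followed by the Eliashberg--Gompf criterion (framing $=\mathrm{tb}-1$). The only difference is that the paper's proof also tracks rotation numbers (using Condition (3) of Hypothesis~\ref{hyp-braids} to arrange $\mathrm{rot}(K_{\mathrm{Leg}})=0$) and lets the unknot's rotation number range over $2k-n$, $1 \leq k \leq n-1$, so as to produce the whole family $W_{n,k}$ used in later sections; this extra bookkeeping is not needed for the bare existence statement you were asked to prove.
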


 Below,  $tb$ denotes the \emph{Thurston-Bennequin number}. 
We shall use the following theorem:

\begin{theorem}\cite{yasha-stein-90, gompf-98}\label{thm-weinkirby} Let $(W, \partial W)$ be a smooth $4$-manifold-with-boundary, with a handlebody decomposition $W = D^4 \cup_i h_i^2$ consisting of only $2$-handles. Suppose that there exists a Legendrian link $L = \cup_i K_i \subset \partial D^4$ in the framed isotopy class of the union of the attaching circles of the $2$-handles $h_i^2$ such that the framing on $K_i$ is $tb(K_i) - 1$ for all $i$. Then $W$ admits the structure of a Weinstein domain. \end{theorem}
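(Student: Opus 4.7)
The plan is to verify the hypotheses of Theorem~\ref{thm-weinkirby} for the two-component Kirby diagram $(K \cup U, 0 \cup -n)$ built in Section~\ref{subsec-mfds}. The theorem reduces producing a Weinstein structure on $(X_n, \partial X_n)$ to producing a Legendrian link $\mathcal{L}_K \cup \mathcal{L}_U \subset (S^3, \xi_{std})$ smoothly isotopic to $K \cup U$ whose contact framings match the handle-attaching framings shifted by one. Concretely, I must achieve $tb(\mathcal{L}_K) = 1$ and $tb(\mathcal{L}_U) = 1 - n$.

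For the knot component, I construct $\mathcal{L}_K$ from the braid word by the standard braid-to-front algorithm. Orient the braid axis vertically and draw each strand as a nearly-horizontal arc in the front projection of $(\mathbb{R}^3,\xi_{std})$: realize each positive generator $\sigma_i$ as an ordinary front-projection crossing, and realize each negative generator $\sigma_i^{-1}$ by inserting a short S-curve, which contributes two extra cusps needed to reverse the slope convention at that crossing. Closing the braid with $m$ left-pointing and $m$ right-pointing cusps yields a Legendrian representative of $\widehat{\beta}$ with writhe $w = c_+ - c_-$ and total cusp count $c = 2m + 2c_-$, whence by the Bennequin formula
$$tb \;=\; w - \tfrac{1}{2} c \;=\; c_+ - 2c_- - m,$$
which is at least $1$ by Hypothesis~\ref{hyp-braids}(2). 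I then apply $c_+ - 2c_- - m - 1$ negative Legendrian stabilizations, each of which decreases $tb$ by one and preserves the smooth isotopy class, yielding $\mathcal{L}_K$ with $tb(\mathcal{L}_K) = 1$ exactly.

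For the meridian, I use a Darboux chart centered at a regular arc of $\mathcal{L}_K$ and place $\mathcal{L}_U$ as a small Legendrian unknot encircling $\mathcal{L}_K$ once within this chart. The minimally-stabilized Legendrian unknot has $tb = -1$, and $n - 2$ additional negative stabilizations (permissible since $M_n$ is hyperbolic only for $n$ sufficiently large) produce $tb(\mathcal{L}_U) = 1 - n$. Because $\mathcal{L}_U$ is confined to a Darboux ball disjoint from the stabilizations performed on $\mathcal{L}_K$, the resulting Legendrian link is smoothly isotopic to $K \cup U$ with the correct linking number one between components, and with contact framings as prescribed. Theorem~\ref{thm-weinkirby} then endows $(X_n, \partial X_n)$ with a Weinstein structure.

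The most delicate bookkeeping is the cusp-and-writhe count when converting each $\sigma_i^{-1}$ into a front-projected S-curve; this is standard in Legendrian braid theory but must be done carefully in order to justify the $tb$ formula above. The conceptual content of the argument is that Hypothesis~\ref{hyp-braids}(2) is engineered precisely so that the naive braid Legendrianization already has Thurston--Bennequin number at least the target value $1$, with any excess absorbed by stabilizations, and that $1 - n \leq -1$ for $n \geq 2$ is within the achievable range for a Legendrian unknot.
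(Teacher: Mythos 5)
Your proposal does not actually prove the stated theorem. Theorem~\ref{thm-weinkirby} is the Eliashberg--Gompf criterion itself: the assertion that a $4$-dimensional $2$-handlebody whose attaching link admits a Legendrian representative with framings $tb(K_i)-1$ carries a Weinstein structure. The paper imports this from \cite{yasha-stein-90, gompf-98} without proof. What you have written is an application of that criterion to the specific Kirby diagrams $(K\cup U,\, 0\cup -n)$ of Section~\ref{subsec-mfds} --- that is, you have sketched a proof of Theorem~\ref{thm-weinsteinstr}, not of Theorem~\ref{thm-weinkirby}. As a proof of the stated theorem your argument is circular: its very first sentence invokes the theorem as a black box. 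A genuine proof would have to open that box: one must exhibit the standard Weinstein $2$-handle $(D^2\times D^2,\omega_{std},X)$ with its Liouville vector field, observe that the attaching region $S^1\times D^2$ inherits a canonical contact structure, identify a standard neighborhood of a Legendrian knot in a contact $3$-manifold with that attaching region, check that the gluing is compatible precisely when the handle framing equals the contact framing $-1$ relative to the Seifert framing (i.e.\ $tb-1$), and verify that the Liouville fields match up to give a Weinstein domain. None of this appears in your write-up.

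If the intended target were instead Theorem~\ref{thm-weinsteinstr}, your argument is essentially the paper's: the braid-to-front conversion, the count $tb = (c_+ - c_-) - \tfrac12\bigl(2m + 2c_-\bigr) = c_+ - 2c_- - m \geq 1$, stabilization down to $tb=1$, and a Legendrian unknot meridian with $tb = -n+1$ all agree with the paper. Two differences are worth noting. First, you perform only negative stabilizations and do not track rotation numbers; the paper instead alternates the two Legendrian realizations of negative crossings (Figure~\ref{fig-zigzag}) and chooses stabilization signs so that $\mathrm{rot}(K_{\mathrm{Leg}})=0$, which is harmless for mere existence of a Weinstein filling but is essential later for conclusion $(3)$ of Theorem~\ref{thm-main}. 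Second, you fix a single Legendrian unknot for the meridian, whereas the paper deliberately ranges over all $n-1$ choices of rotation number $2k-n$, since it is exactly this multiplicity that yields the $n-1$ pairwise non-isotopic contact structures in Proposition~\ref{prop-notistpic}.
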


\begin{figure}[h]
	\centering
	\includegraphics[scale=0.3]{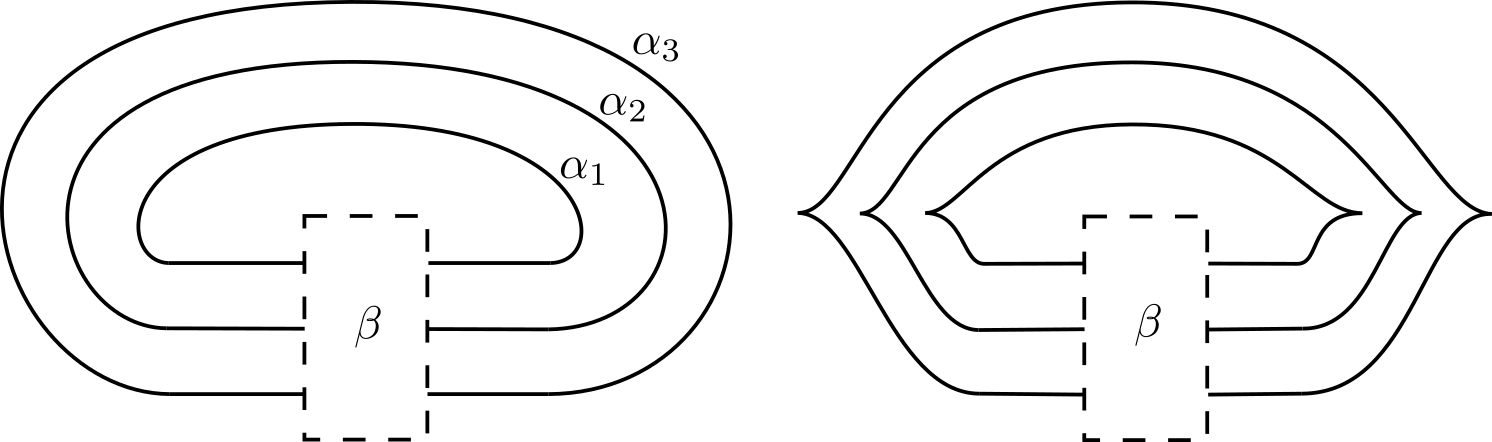}
	\caption{Braid closure $\widehat{\beta}$  obtained by joining the end-points of braid $\beta$ by arcs $\alpha_1, \cdots, \alpha_m$ (left); 
 Legendrian realization of $\widehat \beta$ (right).}
	\label{fig-lag}
\end{figure}

\begin{proof}[Proof of Theorem~\ref{thm-weinsteinstr}]

First, we draw a knot diagram for the braid closure $K = \widehat{\beta}$ (cf. Section \ref{subsec-mfds}) by drawing a horizontal planar diagram of the braid $\beta$, followed by a collection of arcs $\alpha_1, \cdots, \alpha_m$ closing the endpoints of $\beta$. Note that $m$ is the number of strands of $\beta$. We orient $K$ so that the strands of $\beta$ traverse ``left-to-right" while the arcs $\alpha_i$ traverse `` right-to-left". Each arc $\alpha_i$ gives rise to two vertical tangencies of the knot diagram, which we replace by a pair of cusps of appropriate orientation. (See Figure~\ref{fig-lag}.)

{We shall draw every left-handed (or negative) crossing of $\beta$ as in one of the two pictures in Figure~\ref{fig-zigzag}, so that the overcrossing arc has slope less than that of the undercrossing arc. More precisely, let us fix an ordering on all the negative crossings. If $c_{-}$ is even, we draw the negative crossings alternately as in the two pictures in Figure~\ref{fig-zigzag}. If $c_{-}$ is odd, we draw the first negative crossing as in the right picture in Figure~\ref{fig-zigzag}, and alternately use the two  pictures for the rest.}

\begin{figure}[H]
	\centering
	\includegraphics[scale=0.7]{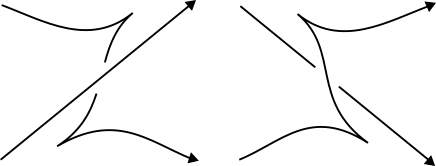}
	\caption{Two {possible} Legendrian realizations of a left-handed crossing by introduction of two cusps of the same orientation.}
	\label{fig-zigzag}
\end{figure}

These modifications lead to a Legendrian front projection for a Legendrian knot $K'$ in the smooth isotopy class of $K \subset S^3$. We note the following  regarding the front diagram for $K'$ that we have constructed:
\begin{enumerate}
\item The number of positive and negative crossings are the same as those for the braid $\beta$, i.e.\ $c_+$ and $c_-$, respectively. 
\item Each arc $\alpha_i$ as well as each negative crossing of $K$ contributes to a pair of cusps. Therefore, the total number of cusps for the front diagram of $K'$ is $2(m + c_{-})$. {The pair of cusps corresponding to each arc $\alpha_i$ are of opposite orientation, while the pair of cusps corresponding to each crossing are of the same orientation.}
\end{enumerate} 
We compute the Thurston-Bennequin number for the Legendrian realization:
\begin{align*}
\mathrm{tb}(K') &= c_{+} - c_{-} - \frac{1}{2} \cdot 2 \left (m + c_{-} \right ) = c_{+} - 2 c_{-} - m.
\end{align*}
{Moreover, by construction, 
\[
\mathrm{rot}(K') =
\begin{cases}
      0, & \text{if $c_{-}$ is even}, \\
      1, & \text{if $c_{-}$ is odd}.
\end{cases}
\]
By Condition $(2)$ of Hypothesis~\ref{hyp-braids} on $\beta$, $\mathrm{tb}(K') \geq 1$. By Condition $(3)$ of Hypothesis~\ref{hyp-braids} on $\beta$, $\mathrm{tb}(K')$ has the same parity as $c_{-}$. Therefore, we may stabilize $K'$ precisely $\mathrm{tb}(K') - 1$ times, and choose the sign of the stabilizations so that $\mathrm{rot}(K') = 0$}. 

Let $K_\mathrm{Leg}$ denote the resulting Legendrian knot obtained by stabilizing $K'$ exactly $\mathrm{tb}(K') - 1$ times in this manner. Then $K_\mathrm{Leg}$ is a Legendrian knot which is still contained in the smooth isotopy class of $K \subset S^3$. Moreover, we have $\mathrm{tb}(K_{\mathrm{Leg}}) = 1$ and $\mathrm{rot}(K_{\mathrm{Leg}}) = 0$.

For any $1 \leq k \leq n-1$, there is  a Legendrian realization $U_{\mathrm{Leg}}$ of the unknot with $\mathrm{tb}(U_{\mathrm{Leg}}) = -n+1$ and $\mathrm{rot}(U_{\mathrm{Leg}}) = 2k - n$. We use such a Legendrian realization for the meridian of $K_{\mathrm{Leg}}$. 

Thus, the framed link $K_{\mathrm{Leg}} \cup U_{\mathrm{Leg}} \subset S^3$ with framing $0$ and $-n$ respectively, satisfies the hypothesis of Theorem \ref{thm-weinkirby}. Since the smooth $4$-dimensional handlebody corresponding to this framed link is the manifold $X_n$ constructed in Section \ref{subsec-mfds}, this produces a Weinstein structure on $X_n$. Note that \emph{a priori} the ambiguity of the choice of $U_{\mathrm{Leg}}$ could lead to different Weinstein structures on $X_n$. We therefore denote the Weinstein domains obtained in this way as $W_{n, k}$, $1 \leq k \leq n-1$. Note that the underlying smooth manifold for $W_{n, k}$ is $X_n$. Thus, the Weinstein structures induce contact structures $\xi_{n, k}$ ($1 \leq k \leq n-1$) on the boundary $M_n = \partial X_n$. Since $\xi_{n, k}$ are Weinstein-fillable contact structures, they must be tight \cite{gromov-85, yasha-90}.
\end{proof}

\subsection{Distinguishing  contact structures}\label{subsec-htpyistpy}

To distinguish the contact structures $\xi_{n, k}$ ($1 \leq k \leq n-1$) on $M_n$ up to isotopy, we follow the overall strategy of Lisca-Mati\'{c} \cite{lismat-97}. We shall use the relative Seiberg-Witten invariants constructed by Kronheimer-Mrowka \cite{kromro-97}. In particular, we need the following theorem:

\begin{theorem}\cite[Theorem 1.2.]{kromro-97}\label{thm-kromro} For a pair $(X, \xi)$ consisting of a compact, connected, oriented $4$-manifold $X$ with nonempty boundary and a contact structure $\xi$ on $\partial X$, there exists a \emph{monopole invariant} 
$$\mathrm{SW} : \mathrm{Spin}^c(X, \xi) \to \Z,$$
defined on the space of $\mathrm{Spin}^c$-structures on $X$ compatible with $\xi$. If $X$ admits an exact symplectic form $\omega$ compatible with $\xi$, then $\mathrm{SW}(\mathfrak{s}) \neq 0$ if and only if $\mathfrak{s} = \mathfrak{s}_0$ is the canonical $\mathrm{Spin}^c$-structure corresponding to $\omega$.\end{theorem}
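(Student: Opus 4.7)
The plan is to prove this by a direct construction in Seiberg--Witten gauge theory, following the template pioneered by Taubes for closed symplectic manifolds and adapted to the relative setting via cylindrical-end analysis. First I would attach a cylindrical end $[0,\infty)\times \partial X$ to $X$ to form a complete Riemannian $4$-manifold $\widehat X$. On the end, the contact structure $\xi$ determines a canonical almost-complex structure $J_\xi$ (via a compatible contact form $\alpha$ and its Reeb vector field), and thereby a canonical $\mathrm{Spin}^c$-structure $\mathfrak{s}_\xi$ with a preferred section of the positive spinor bundle. A $\mathrm{Spin}^c$-structure on $X$ is ``compatible with $\xi$'' if its restriction to $\partial X$ agrees with $\mathfrak{s}_\xi$.

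Next I would define $\mathrm{SW}(\mathfrak{s})$ as a signed count of gauge-equivalence classes of solutions $(A,\Phi)$ to a suitably perturbed Seiberg--Witten equation on $\widehat X$, subject to asymptotic boundary conditions forcing $(A,\Phi)$ to converge on the cylindrical end to the distinguished reducible-type configuration determined by $\xi$. The three standard analytic ingredients must be established: (i) Fredholm theory for the linearization with the chosen decay conditions, giving an index that matches the expected dimension formula in terms of $c_1(\mathfrak{s})$; (ii) transversality, achieved by generic perturbations of the curvature equation; and (iii) compactness of the moduli space, together with coherent orientations yielding a well-defined integer count. Independence of the count from auxiliary choices (metric on $X$, perturbation, compatible almost-complex structure on the end) follows from the usual cobordism argument applied to paths of data.

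For the second statement, when $X$ carries an exact symplectic form $\omega=d\lambda$ with $\lambda|_{\partial X}$ a contact form for $\xi$, I would apply Taubes's symplectic perturbation, rescaling the curvature equation by a parameter $r\omega$ and letting $r\to\infty$. Taubes's estimates show that solutions concentrate along $J_\xi$-holomorphic curves in $\widehat X$ representing a homology class $e(\mathfrak{s})$ depending on $\mathfrak{s}-\mathfrak{s}_0$, where $\mathfrak{s}_0$ is the canonical $\mathrm{Spin}^c$-structure. For $\mathfrak{s}=\mathfrak{s}_0$ the empty curve contributes a signed count of $\pm 1$. For $\mathfrak{s}\neq\mathfrak{s}_0$, any limiting $J_\xi$-holomorphic curve $C$ with controlled asymptotics on the contact end would have symplectic area
\[
\int_C \omega = \int_C d\lambda = \int_{\partial C} \lambda,
\]
and the asymptotic condition forces $\partial C$ either to vanish or to consist of Reeb orbits contributing a non-positive quantity incompatible with positivity of area for a non-constant pseudo-holomorphic curve; either way, no such curve can exist, so $\mathrm{SW}(\mathfrak{s})=0$.

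The principal obstacle I anticipate is the compactness assertion in (iii), because $\widehat X$ is noncompact and the contact end is not of cylindrical (translation-invariant) symplectic type in general. One needs a priori $C^0$ and energy bounds for the spinor and curvature that are uniform on the end, derived from the Weitzenb\"ock formula together with the geometry of $\xi$; this is precisely the part of Kronheimer--Mrowka's argument that requires the contact structure (not merely a $2$-plane distribution) and that links the gauge-theoretic count to the underlying contact topology. Once the a priori estimates are secured, the remainder of the construction is a relatively standard adaptation of closed-manifold SW theory.
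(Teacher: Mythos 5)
This statement is not proved in the paper at all: it is quoted verbatim from Kronheimer--Mrowka \cite[Theorem 1.2]{kromro-97} and used as a black box in Proposition \ref{prop-notistpic}, so there is no internal argument to compare yours against. What can be assessed is whether your sketch is a faithful outline of the Kronheimer--Mrowka proof, and here there are genuine discrepancies. First, the asymptotic model is not a cylindrical end with a ``reducible-type'' limiting configuration: Kronheimer--Mrowka attach a cone $[1,\infty)\times\partial X$ carrying the almost K\"ahler (symplectization-type) structure $\frac{1}{2}d(s^2\alpha)$, and the boundary condition is convergence to the \emph{canonical} configuration of the symplectic end, whose spinor is nowhere vanishing -- in particular it is not reducible. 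Getting this right matters, because the Fredholm and compactness theory in their paper is built precisely around exponential decay to this irreducible canonical solution, and the definition of ``$\mathrm{Spin}^c$-structure compatible with $\xi$'' is relative to it.

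Second, your argument for the vanishing statement is not the one that works (nor the one they give). Kronheimer--Mrowka do not pass through an SW-to-Gromov concentration result on the noncompact manifold $\widehat X$; such a correspondence with contact/conical ends was not available and is far harder than what is needed. Instead they run Taubes's original a priori estimates for the $r\omega$-perturbed equations directly: the Weitzenb\"ock/integral identity involving $\int_X iF_A\wedge\omega$, combined with exactness of $\omega$ (so that the would-be topological term can be converted to a boundary term controlled by the canonical asymptotics), forces every solution for large $r$ to have nowhere-vanishing $\alpha$-component and hence to be gauge-equivalent to the canonical solution; this simultaneously gives $\mathrm{SW}(\mathfrak{s}_0)=\pm1$ and $\mathrm{SW}(\mathfrak{s})=0$ for $\mathfrak{s}\neq\mathfrak{s}_0$. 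Your alternative route has a concrete gap: for a limiting curve $C$ with ends asymptotic to Reeb orbits on the cone, the identity $\int_C\omega=\int_{\partial C}\lambda$ does not hold as written ($C$ is noncompact, $\omega$ grows on the cone, and one must work with finite-energy asymptotics), and non-negativity of the $d\lambda$-energy of such a curve produces no contradiction by itself; exactness alone does not rule out nonconstant finite-energy curves in this setting. So while your first two paragraphs capture the correct architecture (relative moduli count, Fredholm/transversality/orientations, independence of choices), the second half would need to be replaced by the Taubes-style estimate argument to constitute a proof of the quoted theorem.
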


\begin{proposition}\label{prop-notistpic}
	The contact structures $\xi_{n, k}$, $1 \leq k \leq n-1$, on $M_n$ constructed in Section \ref{subsec-cntstr} are pairwise non-isotopic.\end{proposition}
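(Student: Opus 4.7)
The plan is to follow the Lisca--Mati\'c strategy \cite{lismat-97}, using the relative monopole invariants of Kronheimer--Mrowka (Theorem~\ref{thm-kromro}) to distinguish the $\xi_{n,k}$ via the canonical $\mathrm{Spin}^c$-structures of their Weinstein fillings $W_{n,k}$. Since every $W_{n,k}$ has the same underlying smooth $4$-manifold $X_n$, the $\xi_{n,k}$ are all exactly symplectically filled by the \emph{same} underlying $4$-manifold; the content will be that the filling data forces different $\mathrm{Spin}^c$-structures on $X_n$, which by Kronheimer--Mrowka cannot both simultaneously carry non-vanishing monopole invariants compatible with a single contact structure on the boundary.

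First I would record the topological setup. Because $X_n$ is built from $D^4$ by attaching exactly two $2$-handles $h^2_K$ and $h^2_U$ (and no $1$- or $3$-handles), we have $H_2(X_n;\Z) \cong \Z^2$, with generators $[\Sigma_K]$ and $[\Sigma_U]$ obtained by capping off the cores of the two handles with Seifert surfaces for $K$ and $U$ in $S^3$. Let $\mathfrak{s}_{n,k}\in \mathrm{Spin}^c(X_n,\xi_{n,k})$ be the canonical $\mathrm{Spin}^c$-structure coming from the Weinstein structure on $W_{n,k}$. By Gompf's formula \cite{gompf-98} for the first Chern class of a Weinstein handlebody built from a Legendrian link,
\[
\langle c_1(\mathfrak{s}_{n,k}),[\Sigma_K]\rangle = \mathrm{rot}(K_{\mathrm{Leg}}) = 0, \qquad \langle c_1(\mathfrak{s}_{n,k}),[\Sigma_U]\rangle = \mathrm{rot}(U_{\mathrm{Leg}}) = 2k - n.
\]
In particular, for $k\neq k'$ in $\{1,\dots,n-1\}$, the integers $2k-n$ and $2k'-n$ are distinct, so $c_1(\mathfrak{s}_{n,k}) \neq c_1(\mathfrak{s}_{n,k'})$ in $H^2(X_n;\Z)$, and hence $\mathfrak{s}_{n,k}\neq \mathfrak{s}_{n,k'}$ as $\mathrm{Spin}^c$-structures on $X_n$.

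Next I would invoke the monopole obstruction. Suppose, for contradiction, that $\xi_{n,k}$ and $\xi_{n,k'}$ are isotopic for some $k\neq k'$. By Gray stability, we may assume $\xi_{n,k} = \xi_{n,k'}$ as contact structures on $M_n=\partial X_n$, so both $W_{n,k}$ and $W_{n,k'}$ become exact symplectic fillings of a single contact manifold $(M_n,\xi_{n,k})$ sharing the underlying smooth $4$-manifold $X_n$. Applying Theorem~\ref{thm-kromro} to the pair $(X_n,\xi_{n,k})$ twice---first with the exact symplectic form from $W_{n,k}$, then with the one from $W_{n,k'}$---yields that $\mathrm{SW}(\mathfrak{s})\neq 0$ holds if and only if $\mathfrak{s}=\mathfrak{s}_{n,k}$ and also if and only if $\mathfrak{s}=\mathfrak{s}_{n,k'}$. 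Thus $\mathfrak{s}_{n,k}=\mathfrak{s}_{n,k'}$, contradicting the $c_1$ computation above.

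The main work is the $c_1$ computation in the displayed equation, which relies on the standard Gompf handle calculus for rotation numbers and on confirming that the relative $\mathrm{Spin}^c$-structures $\mathfrak{s}_{n,k}$ live in the common set $\mathrm{Spin}^c(X_n,\xi_{n,k})$ after the Gray stability identification; both are routine once the Legendrian diagram of Section~\ref{subsec-cntstr} is in place. The rest is a direct application of Theorem~\ref{thm-kromro}, exactly as in \cite{lismat-97}.
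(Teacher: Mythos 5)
Your proposal is correct and follows essentially the same route as the paper: the Lisca--Mati\'c strategy of applying the Kronheimer--Mrowka relative monopole invariant to the two Weinstein structures on the common underlying $4$-manifold $X_n$, forcing their canonical $\mathrm{Spin}^c$-structures to coincide, and then deriving a contradiction from the evaluation $\langle c_1, [h^2_U]\rangle = 2k-n$. The only cosmetic differences are that you absorb the isotopy via Gray stability where the paper extends it into a symplectized collar (these are equivalent maneuvers), and you additionally record $\langle c_1,[\Sigma_K]\rangle = 0$, which is not needed for this proposition.
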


\begin{proof}
	Suppose $\xi_{n, k}$ and $\xi_{n, l}$ are isotopic contact structures
	on $M_n=\partial X_n$. We choose an isotopy between them, and extend the isotopy into   a symplectized collar of $\partial X_n$ in $X_n$. Thus, we obtain two different Weinstein structures on $X_n$, Weinstein-isotopic to those of $W_{n, k}$ and $W_{n, l}$ respectively, inducing the same contact structure $\xi$ on $M_n$. {Let $\omega_k, \omega_l$ denote the corresponding exact symplectic forms on $X_n$.} Let $\mathfrak{s}_k$ and $\mathfrak{s}_l$ denote the canonical $\mathrm{Spin}^c$ structures on $X_n$ corresponding to the symplectic forms $\omega_k$ and $\omega_l$, respectively. Explicitly, we may think of $\mathfrak{s}_k$ (resp. $\mathfrak{s}_l$) as induced from the choice of an almost complex structure $J_k$ (resp. $J_l$) compatible with $\omega_k$ (resp. $\omega_l$).

By Theorem \ref{thm-kromro}, $\mathrm{SW}(\mathfrak{s}_k)$ and $\mathrm{SW}(\mathfrak{s}_l)$ are both nonzero, and hence
isomorphic. Indeed,  they are respectively equal to the
canonical $\mathrm{Spin}^c$ structures corresponding to possibly different exact symplectic forms $\omega_k$ and $\omega_l$ on $X_n$. However, Theorem \ref{thm-kromro} says that there can only be one $\mathrm{Spin}^c$ structure on which $\mathrm{SW}$ takes nonzero value. In particular, for the two different symplectic forms $\omega_k, \omega_l$, the canonical $\mathrm{Spin}^c$ structures they give rise to are the same.

 Consequently, $J_k, J_l$ are isomorphic almost-complex structures on $X_n$. In particular, $c_1(W_{n, k}, J_k) = c_1(W_{n, l}, J_l)$ as cohomology classes in $H^2(X_n; \Bbb Z)$. Let $[h^2_U] \in H_2(X_n; \Bbb Z)$ denote the homology class corresponding to the $2$-handle $h^2_U$ in $X_n$. As the rotation number of the attaching Legendrian knot of a Weinstein $2$-handle records the first Chern class of the Weinstein handlebody, we have
\begin{align*}\langle c_1(W_{n, k}, J_k), [h^2_U] \rangle &= 2k - n,\\
\langle c_1(W_{n, l}, J_l), [h^2_U] \rangle &= 2l - n.\end{align*}  
Therefore, $k = l$. 
\end{proof}

\begin{proposition}\label{prop-notcntct}There exists a natural number $c > 0$ depending only on the braid $\beta$ chosen in Section \ref{subsec-mfds}, such that for all $n > 0$ sufficiently large, $\{\xi_{n, k} : 1 \leq k \leq n-1\}$ contains a subset of size at least $n/c$ such that all elements of the subset are pairwise non-contactomorphic.\end{proposition}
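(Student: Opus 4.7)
The plan is to extract pairwise non-contactomorphic structures from the non-isotopic ones of Proposition~\ref{prop-notistpic} by bounding the size of the mapping class group $\mathrm{MCG}(M_n)$. Two contact structures on a closed oriented $3$-manifold are contactomorphic if and only if their isotopy classes lie in the same orbit of the natural action of $\mathrm{MCG}(M_n)$ on isotopy classes of contact structures. Hence, the number of contactomorphism classes among $\{\xi_{n,k} : 1 \leq k \leq n-1\}$ is at least $(n-1)/|\mathrm{MCG}(M_n)|$. It therefore suffices to produce a constant $c$, depending only on $\beta$, that uniformly bounds $|\mathrm{MCG}(M_n)|$ for all sufficiently large $n$.

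For $n$ sufficiently large, $M_n$ is closed hyperbolic, so by Mostow rigidity together with Gabai's resolution of the Smale conjecture for hyperbolic $3$-manifolds, $\mathrm{MCG}(M_n)$ is finite and naturally isomorphic to the isometry group $\mathrm{Isom}(M_n)$. I would then bound $|\mathrm{Isom}(M_n)|$ by a volume argument: the quotient orbifold $O_n := M_n / \mathrm{Isom}(M_n)$ is a closed orientable hyperbolic $3$-orbifold, and by the Gehring-Martin-Marshall theorem on the minimum volume of hyperbolic $3$-orbifolds, there is a universal lower bound $v_{\min} > 0$ on $\mathrm{vol}(O_n)$. Since $\mathrm{vol}(M_n) = |\mathrm{Isom}(M_n)| \cdot \mathrm{vol}(O_n)$, this yields $|\mathrm{Isom}(M_n)| \leq \mathrm{vol}(M_n)/v_{\min}$.

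Thurston's hyperbolic Dehn surgery theorem guarantees both that $M_n \to M_\infty = S^3 \setminus \widehat{\beta}$ geometrically and that $\mathrm{vol}(M_n) < \mathrm{vol}(M_\infty)$ for $n$ large. Thus the constant $c := \lceil \mathrm{vol}(M_\infty)/v_{\min} \rceil$ depends only on $\beta$ and uniformly bounds $|\mathrm{MCG}(M_n)|$, so the pigeonhole principle applied to the MCG-action on $\{\xi_{n,k}\}$ produces a subset of size at least $(n-1)/c$ of pairwise non-contactomorphic contact structures.

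The main obstacle is the uniform bound on the isometry groups; the volume argument above is clean but relies on the deep theorem on the minimum volume of hyperbolic $3$-orbifolds. An alternative, more direct route is via geometric convergence: any sequence of isometries $g_n \in \mathrm{Isom}(M_n)$ sub-converges on large compacta to an isometry of $M_\infty$, from which one can deduce that $\mathrm{Isom}(M_n)$ is asymptotically controlled by $\mathrm{Isom}(M_\infty)$ together with the finite symmetries of the solid tori being Dehn-filled along $1/n$-slopes. Either approach yields the desired constant $c$ depending only on $\beta$.
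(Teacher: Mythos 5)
Your proposal is correct and follows the same overall skeleton as the paper: pass from the pairwise non-isotopic structures of Proposition~\ref{prop-notistpic} to non-contactomorphic ones by noting (via Gray stability) that contactomorphism classes are orbits of the $\mathrm{MCG}(M_n)$-action on isotopy classes, then uniformly bound $|\mathrm{MCG}(M_n)| \cong |\mathrm{Isom}(M_n)|$ using Mostow rigidity and the Gabai--Meyerhoff--Thurston/Smale-conjecture input. Where you genuinely diverge is in how the uniform bound is obtained. The paper exploits the geometry of long Dehn fillings: for $n$ large, the core of the filling solid torus is the unique shortest geodesic of $M_n$, every isometry preserves it, and drilling it gives an injection $\mathrm{Isom}(M_n) \hookrightarrow \mathrm{Isom}(S^3 \setminus K)$, so one may take $c = |\mathrm{Isom}(S^3 \setminus K)|$ --- an explicit constant tied to the limiting cusped manifold; your sketched ``alternative route'' via geometric convergence of isometries is close in spirit to this. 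Your primary argument instead bounds $|\mathrm{Isom}(M_n)| \leq \mathrm{vol}(M_n)/v_{\min} < \mathrm{vol}(M_\infty)/v_{\min}$ using a universal lower bound on hyperbolic $3$-orbifold volume together with Thurston's volume monotonicity under filling. This is valid, and it buys generality (no analysis of the shortest geodesic is needed), at the cost of a typically much larger constant and of invoking Gehring--Martin--Marshall; note that a soft Kazhdan--Margulis/Meyerhoff-type lower bound suffices, and that if $\mathrm{Isom}(M_n)$ contains orientation-reversing isometries the quotient orbifold may be non-orientable, so one should either pass to the orientation double cover (halving $v_{\min}$) or quote a bound valid for all hyperbolic $3$-orbifolds. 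The discrepancy between $(n-1)/c$ and $n/c$ is shared with the paper's own proof and is absorbed by enlarging $c$ by one, so it is immaterial.
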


\begin{proof} By the hypothesis that $n > 0$ is sufficiently large, $M_n$ is hyperbolic by Thurston's Dehn surgery theorem. Suppose $\xi_{n, k}$ and $\xi_{n, l}$ are contactomorphic. As they are not isotopic by Proposition \ref{prop-notistpic}, any contactomorphism between them must be a nontrivial smooth mapping class of $M_n$, i.e., it must not be diffeotopic to identity. By Mostow rigidity and \cite{gameth-03}, the smooth mapping class group of $M_n$ is isomorphic to the isometry group of $M_n$. If $n > 0$ is chosen to be sufficiently large, then $M_n$ admits a unique shortest geodesic given by the core of the  solid torus used in Dehn filling \cite{thurston-book}. Any isometry of $M_n$ must preserve this curve. Deleting this curve results in an isometry of the manifold $S^3 \setminus K$. Therefore, $\mathrm{Isom}(M_n) \cong \mathrm{Isom}(S^3 \setminus K)$. Let $c := |\mathrm{Isom}(S^3 \setminus K)|$ denote the order of this group. This is a finite number by Mostow rigidity, and only depends on the braid $\beta$ whose closure gives the knot $K$. Thus, the action of $\mathrm{MCG}(M_n)$ on the set $\{\xi_{n, 1}, \cdots, \xi_{n, n-1}\}$ has all orbits of size at most $c$. Therefore, there exist at least $n/c$ contact structures, one from each orbit, that are pairwise non-contactomorphic.\end{proof}

\subsection{Homotopy-indistinguishability of plane fields}\label{subsec-hprincfail}

In contrast to Proposition \ref{prop-notistpic} and \ref{prop-notcntct}, we shall show that the contact structures $\xi_{n, k}$ ($1 \leq k \leq n-1$) are all homotopic as oriented rank-$2$ distributions. We will use Gompf's $\theta$-invariant {(also known as the $d_3$-invariant in some of the more recent literature)} to accomplish this.

\begin{definition}\cite[Definition 4.2.]{gompf-98} Let $M^3$ be an oriented $3$-manifold and $\xi$ be a co-oriented plane field on $M$. Let $(X^4, J)$ be an almost-complex $4$-manifold with $\partial X = M$ such that $\xi = TM \cap JTM$. The $\theta$-invariant of $(M, \xi)$ is defined to be
$$\theta(M, \xi) := \langle c_1(X, J)^2, [X, \partial X] \rangle - 2\chi(X) - 3\sigma(X),$$
where $c_1(X, J)$ is the first Chern class of $(X, J)$, $\chi(X)$ is the Euler characteristic of $X$ and $\sigma(X)$ is the signature of $(X, \partial X)$. This is an invariant of $(M^3, \xi)$ independent of the choice of the bounding almost-complex manifold $(X, J)$.
\end{definition}

The following theorem shows that the $\theta$-invariant is a complete invariant of oriented plane fields up to homotopy on integer homology $3$-spheres:

\begin{theorem}\cite{gompf-98}\cite[Proposition 2.2.]{lismat-97} If $M^3$ is an oriented integer homology $3$-sphere and $\xi_1, \xi_2$ are co-oriented plane fields on $M$, then $\xi_1$ is homotopic to $\xi_2$ if and only if $\theta(\xi_1) = \theta(\xi_2)$.
\end{theorem}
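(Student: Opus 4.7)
\emph{Forward implication.} If $\xi_1$ is homotopic to $\xi_2$ through co-oriented plane fields $\{\xi_t\}$, choose an almost-complex manifold $(X, J)$ with $\partial X = M$ and $\xi_1 = TM \cap J(TM)$. I would deform $J$ in a collar $[0,1] \times M \subset X$ to an almost-complex structure $J'$ inducing $\xi_t$ on $\{t\} \times M$; since the deformation is through almost-complex structures, $c_1(X, J')$ is cohomologous to $c_1(X, J)$, and $\chi(X), \sigma(X)$ are topological, so the defining formula gives $\theta(\xi_1) = \theta(\xi_2)$.

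\emph{Reverse implication.} I would appeal to Pontrjagin's classification of $[M, S^2]$. Since $TM$ is trivializable for any closed oriented $3$-manifold, fixing a trivialization and a Riemannian metric identifies co-oriented plane fields on $M$ with their positive unit normal, giving a map $f_\xi : M \to S^2$. The primary obstruction to homotoping $f_{\xi_1}$ to $f_{\xi_2}$ lies in $H^2(M; \pi_2(S^2)) \cong H^2(M; \Z)$, which vanishes for an integer homology $3$-sphere. The Pontrjagin--Hopf secondary obstruction then places homotopy classes in bijection with
\[
H^3(M; \pi_3(S^2)) \;\cong\; \Z.
\]
It thus suffices to show that $\theta$ is injective on homotopy classes of co-oriented plane fields.

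\emph{Main obstacle: injectivity of $\theta$.} For each $n \in \Z$, I would construct an explicit local twist $\xi_n$ of $\xi$ supported in a small $3$-ball $B \subset M$, corresponding to $n$ times the generator of $\pi_3(SO(3)) \cong \Z$, and compute the change $\theta(\xi_n) - \theta(\xi)$. Concretely, given an almost-complex bounding $(X, J)$ for $(M, \xi)$, I would attach a (Weinstein) $2$-handle along a small Legendrian unknot in $B$ with rotation number proportional to $n$, and extend $J$ across the handle to produce $(X', J')$ bounding $(M, \xi_n)$. The changes $\Delta\chi = 1$, $\Delta\sigma$ dictated by the framing, and $\Delta(c_1^2)$ computable from the rotation number (as in \cite[Section 4]{gompf-98}) then combine to express $\theta(\xi_n) - \theta(\xi)$ as a nonzero linear function of $n$. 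The hard part of the argument is verifying that the Wu-type computation of $c_1^2$ in this cobordism is indeed linear with nonzero slope; once this is done, $\theta$ is injective on homotopy classes, and combined with the forward direction it is a complete invariant, as asserted.
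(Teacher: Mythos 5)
Note first that the paper does not prove this statement; it is quoted from Gompf \cite{gompf-98} and Lisca--Mati\'c \cite[Proposition 2.2]{lismat-97}, so the comparison below is with the standard (obstruction-theoretic) argument rather than with an in-paper proof. Your forward implication is fine, and your reduction of the reverse implication is the right one: on an integer homology sphere the primary obstruction in $H^2(M;\pi_2(S^2))$ and the indeterminacy both vanish, so homotopy classes of co-oriented plane fields form a $\Z$-torsor detected by the Pontrjagin--Hopf secondary invariant, and everything comes down to showing that $\theta$ changes by a fixed nonzero amount under a local twist of $\xi$ on a ball by the generator of $\pi_3(S^2)\cong\Z$.

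The genuine gap is in your proposed computation of that change. Attaching a Weinstein $2$-handle along a Legendrian unknot in a Darboux ball does \emph{not} produce an almost-complex manifold bounding $(M,\xi_n)$: the smooth framing of a Weinstein handle is $\mathrm{tb}-1$, and since a Legendrian unknot has $\mathrm{tb}\le -1$, the framing is at most $-2$, so the new boundary is $M$ connect-summed with a nontrivial lens space, never $M$ itself. Moreover, the rotation number cannot be made ``proportional to $n$'' while keeping the attaching data fixed: $|\mathrm{rot}|$ is bounded in terms of $\mathrm{tb}$, so increasing it forces stabilizations, which change $\mathrm{tb}$, hence the framing, hence $\Delta\sigma$, $\Delta(c_1^2)$ \emph{and} the diffeomorphism type of the boundary, all simultaneously. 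So the quantity you compute is not $\theta(\xi_n)-\theta(\xi)$ for two plane fields on the same $M$, and the ``hard part'' you defer is not merely a Wu-type bookkeeping issue --- the cobordism itself is wrong. The standard repair is either Gompf's obstruction-theoretic argument that twisting on a ball by the generator of $\pi_3(S^2)$ changes $\theta$ by exactly $\pm 4$, or to localize the twist to a standard $(S^3,\eta_n)$ summand and compute $\theta(\eta_n)$ from explicit almost-complex fillings (e.g.\ $B^4$ versus $\C\P^2$ minus a ball, which give $\theta=-2$ and $\theta=2$), combined with an additivity argument under connected sum; with either of these in place of your handle attachment, the rest of your outline goes through.
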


\begin{proposition}The contact structures $\xi_{n, k}$, $1 \leq k \leq n-1$, belong to the same homotopy class of oriented rank-$2$ distributions.\end{proposition}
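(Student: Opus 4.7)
The plan is to invoke the just-stated theorem of Gompf and Lisca--Mati\'{c}: on an integer homology $3$-sphere the $\theta$-invariant is a complete homotopy invariant of co-oriented plane fields. It therefore suffices to show that $\theta(M_n, \xi_{n,k})$ is independent of $k$ for $1 \le k \le n-1$. As a bounding almost-complex $4$-manifold I take $(X_n, J_k)$, where $J_k$ is an $\omega_k$-compatible almost-complex structure on the Weinstein domain $W_{n,k}$ constructed in Section~\ref{subsec-cntstr}; since $\xi_{n,k} = TM_n \cap J_k TM_n$ on $M_n = \partial X_n$, Gompf's formula gives
$$\theta(M_n, \xi_{n,k}) = \langle c_1(X_n, J_k)^2, [X_n, M_n]\rangle - 2\chi(X_n) - 3\sigma(X_n).$$
The smooth topology of $X_n$ does not depend on $k$, so the terms $\chi(X_n)$ and $\sigma(X_n)$ drop out, and everything reduces to verifying that $\langle c_1(X_n, J_k)^2, [X_n, M_n]\rangle$ is independent of $k$.

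Next I would read off $c_1(X_n, J_k)$ from the Weinstein Kirby diagram. Recall that $X_n = D^4 \cup h_K^2 \cup h_U^2$, where $h_K^2$ is attached along $K_{\mathrm{Leg}}$ with framing $\mathrm{tb}(K_{\mathrm{Leg}}) - 1 = 0$ and $h_U^2$ is attached along $U_{\mathrm{Leg}}$ with framing $\mathrm{tb}(U_{\mathrm{Leg}}) - 1 = -n$; let $[F_K], [F_U]$ denote the resulting generators of $H_2(X_n; \Z) \cong \Z^2$. Since $U$ is a meridian of $K$, the intersection form in this basis is
$$Q = \begin{pmatrix} 0 & 1 \\ 1 & -n \end{pmatrix}, \qquad \det Q = -1,$$
consistent with $\partial X_n$ being an integer homology sphere. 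Gompf's recipe identifies $c_1(X_n, J_k)$ with the cochain evaluating to $\mathrm{rot}(K_{\mathrm{Leg}}) = 0$ on $[F_K]$ and to $\mathrm{rot}(U_{\mathrm{Leg}}) = 2k - n$ on $[F_U]$. It is precisely Hypothesis~\ref{hyp-braids}(3) that arranges the parity so that $K'$ can be stabilized down to a Legendrian realization $K_{\mathrm{Leg}}$ with rotation number exactly $0$; this normalization is what makes the rest of the argument go through.

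Finally, since $M_n$ is an integer homology sphere, the natural map $H^2(X_n) \to H^2(X_n, M_n)$ is an isomorphism and Poincar\'e--Lefschetz duality gives $H^2(X_n, M_n) \cong H_2(X_n)$. Writing the Poincar\'e dual of $c_1(X_n, J_k)$ as $\alpha_k = p_k [F_K] + q_k [F_U]$, the pairing equations $\alpha_k \cdot [F_K] = q_k = 0$ and $\alpha_k \cdot [F_U] = p_k - n q_k = 2k - n$ force $q_k = 0$ and $p_k = 2k - n$. Hence $\alpha_k = (2k - n)[F_K]$ lies in the null direction of $Q$, and
$$\langle c_1(X_n, J_k)^2, [X_n, M_n]\rangle = \alpha_k \cdot \alpha_k = (2k-n)^2 \, ([F_K]\cdot[F_K]) = 0,$$
independent of $k$. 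Consequently $\theta(M_n, \xi_{n,k}) = -2\chi(X_n) - 3\sigma(X_n)$ for every $k$, and the proposition follows. The only real subtlety in this plan is the bookkeeping with the various duality identifications on the $4$-manifold-with-boundary $X_n$; these all collapse cleanly because $M_n$ has trivial rational homology, so the main content of the proof is the geometric fact that the rotation number zero condition on $K_{\mathrm{Leg}}$ (allowed by Hypothesis~\ref{hyp-braids}(3)) lands $c_1$ on the isotropic generator of the intersection form.
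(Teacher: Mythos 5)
Your proposal is correct and follows essentially the same route as the paper: both reduce the claim to equality of Gompf's $\theta$-invariant, use the Weinstein domains $W_{n,k}$ (smoothly $X_n$, so $\chi$ and $\sigma$ drop out), read off $\langle c_1, [h^2_K]\rangle = 0$ and $\langle c_1, [h^2_U]\rangle = 2k-n$ from the rotation numbers (with Hypothesis~\ref{hyp-braids}(3) ensuring $\mathrm{rot}(K_{\mathrm{Leg}})=0$), and conclude $c_1^2 = 0$ because the Poincar\'e dual of $c_1$ is $(2k-n)$ times the class of the $0$-framed handle $h^2_K$. Your explicit duality bookkeeping with the intersection form $\left(\begin{smallmatrix} 0 & 1 \\ 1 & -n \end{smallmatrix}\right)$ just spells out the step the paper states more briefly.
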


\begin{proof}We show that the $\theta$-invariants of $\xi_{n, k}$ are the same for all $1 \leq k \leq n-1$. We choose the bounding almost-complex $4$-manifold to be $(W_{n, k}, J_k)$. Since the smooth manifold underlying $W_{n, k}$ is just $X_n$, the Euler characteristic and signatures agree for all $1 \leq k \leq n-1$. The homology group $H_2(W_{n, k}; \Bbb Z)$ is freely generated by the fundamental classes $[h^2_K], [h^2_U]$ of the $2$-handles. 

{\begin{enumerate}
\item In Proposition \ref{prop-notistpic}, we had already computed 
$$\langle c_1(W_{n, k}, J_k), [h^2_U] \rangle = 2k - n$$
\item By the construction of $K_{\mathrm{Leg}}$ in Section \ref{subsec-cntstr}, the rotation number of $K_{\mathrm{Leg}}$ is zero. \emph{This is the only place where Condition (3) of Hypothesis~\ref{hyp-braids} is used.} Therefore, $$\langle c_1(W_{n, k}, J_k), [h^2_K] \rangle = 0$$
\end{enumerate}}

Thus, $c_1(W_{n, k}) = (2k - n) \mathrm{PD}[h^2_K]$. Hence, $c_1(W_{n, k})^2 = 0$ for all $1 \leq k \leq n-1$ as $K$ has framing $0$. This concludes the proof.\end{proof}

\section{Further examples: rational surgery on links}\label{sec-furex}

In Section \ref{sec-pfmain}, we constructed tight contact structures on hyperbolic manifolds obtained from choosing appropriate braids $\beta$ with closure $\widehat{\beta}$ a hyperbolic knot, and performing $1/n$-surgery on the closure. In this section, we  generalize  the results of Section \ref{sec-pfmain} in two related directions:
\begin{enumerate}
\item By considering surgery coefficients of the form $p/q$ instead of $1/n$, 
\item By allowing $\widehat{\beta}$  to be a hyperbolic link instead of a hyperbolic knot.
\end{enumerate}
We begin with some notation and preliminary definitions.

\begin{definition}\label{def-negcont}Let $r$ be a real number. A \emph{negative continued fraction expansion} for $r$, denoted $[a_0, a_1, a_2, \cdots]$, is a (possibly non-terminating) expression of the form
$$r = a_0 - \cfrac{1}{a_1 - \cfrac{1}{a_2  - \cdots}}$$
such that $a_i \leq -2$. {These are sometimes known in the literature as Hirzebruch-Jung continued fractions.}
\end{definition}

{
\begin{remark}\label{rmk-negcontrat} We make a few remarks on the existence and termination of negative continued fraction expansions (Definition \ref{def-negcont}).
\begin{enumerate}[leftmargin=*]
\item Every real number $r < -1$ admits a negative continued fraction expansion. We briefly describe an algorithm to produce such a thing. Set $a_0 := -\lceil -r \rceil$. Note $a_0 \leq -2$. If $r$ is an integer, the algorithm terminates and $r = [a_0]$ is the required negative continued fraction expansion. Otherwise, observe:
$$r = a_0 - \frac{1}{(a_0 - r)^{-1}}$$
Now, $a_0 - r = -\lceil -r \rceil - r \in (0, 1)$, as $r$ is not an integer. Thus, $(a_0 - r)^{-1}$ is well-defined and less than $-1$. Reassign $r$ to be $(a_0 - r)^{-1}$ above, and proceed in the same fashion to define $a_1, a_2, \cdots$. Note that we terminate upon encountering an integer in the process. We also have $a_i \leq -2$. Then, $r = [a_0, a_1, a_2, \cdots]$ is a negative continued fraction expansion. 
\item Any real number with a terminating negative continued fraction expansion is rational, but the converse is false. For instance, $-1 = [-2,-2,-2,\cdots]$. \\ \indent
However, if $r < -1$ is a rational number, then the algorithm above does produce a terminating negative continued fraction for $r$. To see this, suppose $r = -m/n < -1$, where $m/n > 0$ and $(m, n) = 1$. The above algorithm can be described as a variant of the Euclidean algorithm. Indeed, find the unique integer $d$ such that $m = n d - k$, where $0 \leq k \leq n-1$. Then $a_0 = -d$. If $k = 0$, the algorithm terminates. If not, we repeat the process with $(a_0 - r)^{-1} = -n/k$. We then find the unique integer $d'$ such that $n = k d' - l$, where $0 \leq l \leq k - 1$. Notice we have a strict inequality $l < k$. This means eventually the algorithm must terminate. Hence, we have produced a terminating continued fraction expansion for $r$.
\end{enumerate}
\end{remark}
}

{
\begin{definition}Let $r < -1$ be a real number. Suppose the algorithm described in Remark \ref{rmk-negcontrat} produces a terminating continued fraction expansion $r = [a_0, a_1, \cdots, a_k]$. Then, we shall denote
$$\Phi(r) := |a_0 + 1| |a_1 + 1| \cdots |a_k + 1|$$
\end{definition}
}

\subsection{Rational surgery on links: {$0 < p/q < 1$}}\label{subsec-surglink}
Let $\beta \in B_m$ be a braid. Let $\sigma : B_m \to S_m$ denote the natural epimorphism from the braid group to the permutation group. The components of the link $\widehat{\beta}$ are in one-to-one correspondence with the cycle decomposition of the permutation $\sigma(\beta) \in S_m$. Suppose $\sigma(\beta)$ has cycle type $(m_1, \cdots, m_\ell)$. Then, we may write $\widehat{\beta}$ as a union of its components, each of which is a braid knot:
$$\widehat{\beta} = \widehat{\beta}_1 \cup \cdots \cup \widehat{\beta}_\ell,$$
where $\beta_i \in B_{m_i}$ is a braid on $m_i$ strands, $1 \leq i \leq \ell$. We fix the following notation:
{
\begin{enumerate}
\item Let $c_{i, +}, c_{i, -}$ be the number of positive and negative crossings of $\beta_i$, respectively.
\item Let $d_{i, j, -}$ be the number of negative crossings between strands of $\beta_i$ and strands of $\beta_j$. We also define,
$$d_{i, -} = \sum_{j \neq i} d_{i, j, -}$$
\end{enumerate}
\begin{remark} While the bookkeeping of $c_{i, -}$ plays a crucial role in ensuring an appropriate Legendrian realization of each component $\widehat{\beta}_i$ of $\widehat{\beta}$, an analogous role is played by the quantity $d_{i, -}$ in ensuring an appropriate Legendrian realization of the crossings between \emph{different} components $\widehat{\beta}_i$ and $\widehat{\beta}_j$ of $\widehat{\beta}$. We shall see this in detail in the proof of Theorem \ref{thm-eglinkrat}.\end{remark}}

\begin{definition}\label{def-slopevec}Let $\beta \in B_m$ be a braid, with its closure $\widehat{\beta} = \widehat{\beta}_1 \cup \cdots \cup \widehat{\beta}_\ell$ as above. Define a \emph{slope vector} to be an ordered tuple of rational numbers,
$$\mathbf{v} = (p_1/q_1, \cdots, p_\ell/q_\ell).$$
Let $M(\beta, \mathbf{v})$  be the $3$-manifold obtained by Dehn filling  $S^3 \setminus \widehat{\beta}$ with surgery coefficient $p_i/q_i$ on the component $\widehat{\beta}_i$,
for all $1 \leq i \leq \ell$. \end{definition}

{\begin{definition} Let $\mathbf{v} = (p_1/q_1, \cdots, p_\ell/q_\ell)$ be a slope vector such that $p_i/q_i \in (0, 1)$ ($1 \leq i \leq \ell)$. By Remark \ref{rmk-negcontrat}, $-q_i/p_i$ has a terminating negative continued fraction expansion. We define
$$\Phi(\mathbf{v}) := \Phi(-q_1/p_1) \Phi(-q_2/p_2) \cdots \Phi(-q_\ell/p_\ell),$$
where $\Phi(r)$ is defined in Definition \ref{def-negcont}.
\end{definition}
}

\begin{theorem}\label{thm-eglinkrat}Let $\beta \in B_m$ be a braid with closure $\widehat{\beta} = \widehat{\beta}_1 \cup \cdots \cup \widehat{\beta}_\ell$ such that {${\beta}_i$ satisfies the condition $c_{i, +} - 2c_{i, -} - d_{i, -} - m_i \geq 1$ for all $1 \leq i \leq \ell$. Let $\mathbf{v} = (p_1/q_1, \cdots, p_\ell/q_\ell)$ be a slope vector such that $0 < p_i/q_i < 1$ for all $1 \leq i \leq \ell$.} Then $M(\beta, \mathbf{v})$ admits at least $\Phi(\mathbf{v})$ distinct tight contact structures up to isotopy. \end{theorem}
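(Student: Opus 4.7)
The plan is to adapt the Kirby-diagrammatic Legendrian construction of Section \ref{subsec-cntstr} and the Seiberg--Witten distinguishing argument of Proposition \ref{prop-notistpic} to the multi-component, rational-surgery setting. The key new input is to replace the single meridian used in Section \ref{subsec-mfds} by a chain of unknotted meridians per component, whose integer framings are dictated by the negative continued fraction expansion of $-q_i/p_i$.

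First, for each $1 \leq i \leq \ell$, since $p_i/q_i \in (0, 1)$, Remark \ref{rmk-negcontrat} provides a terminating expansion $-q_i/p_i = [a_{i,0}, \ldots, a_{i,k_i}]$ with $a_{i,j} \leq -2$. Iterated (inverse) slam-dunks convert the $p_i/q_i$-surgery on $\widehat{\beta}_i$ into integer surgery on $\widehat{\beta}_i$ (with framing $0$) together with a chain of unknotted meridians: $U_{i,0}$ encircles $\widehat{\beta}_i$ with framing $a_{i,0}$, and for $1 \leq j \leq k_i$, $U_{i,j}$ encircles $U_{i,j-1}$ with framing $a_{i,j}$. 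Reading this as a Kirby diagram yields a smooth $2$-handlebody $X(\beta, \mathbf{v})$ with $\partial X(\beta, \mathbf{v}) = M(\beta, \mathbf{v})$.

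Next, I would Legendrianize the link $\widehat{\beta} \cup \bigsqcup_{i,j} U_{i,j}$. The braid closure $\widehat{\beta}$ is Legendrianized as in Section \ref{subsec-cntstr}: each closure arc of $\widehat{\beta}_i$ becomes a pair of cusps, and each negative crossing is resolved by one of the zig-zag templates of Figure \ref{fig-zigzag}. A self-negative crossing of $\widehat{\beta}_i$ contributes two cusps to $\widehat{\beta}_i$; a negative crossing between $\widehat{\beta}_i$ and $\widehat{\beta}_j$ contributes two cusps to exactly one of $\widehat{\beta}_i, \widehat{\beta}_j$ depending on the chosen template. In the worst case, where every inter-component cusp involving $\widehat{\beta}_i$ is assigned to $\widehat{\beta}_i$, a direct count gives
\[
\mathrm{tb}(\widehat{\beta}_i) \;=\; c_{i,+} - c_{i,-} - \tfrac{1}{2}\bigl(2m_i + 2c_{i,-} + 2d_{i,-}\bigr) \;=\; c_{i,+} - 2c_{i,-} - d_{i,-} - m_i \;\geq\; 1
\]
by hypothesis. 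Since this is the minimum of $\mathrm{tb}(\widehat{\beta}_i)$ over template choices, any global assignment still yields $\mathrm{tb}(\widehat{\beta}_i) \geq 1$ for every $i$; stabilizing each $\widehat{\beta}_i$ exactly $\mathrm{tb}(\widehat{\beta}_i) - 1$ times produces a Legendrian realization with $\mathrm{tb} = 1$, matching the required surgery framing $0 = \mathrm{tb} - 1$ of Theorem \ref{thm-weinkirby}. Each meridian $U_{i,j}$ is unknotted and admits exactly $|a_{i,j} + 1|$ Legendrian isotopy classes with $\mathrm{tb} = a_{i,j} + 1$, indexed by the rotation number. Any tuple $\mathbf{r} = (r_{i,j})$ of such rotation numbers then yields, via Theorem \ref{thm-weinkirby}, a Weinstein handlebody $W(\mathbf{r})$ on $X(\beta, \mathbf{v})$, inducing a Weinstein-fillable (hence tight) contact structure $\xi(\mathbf{r})$ on $M(\beta, \mathbf{v})$.

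The total number of such tuples is $\prod_{i,j}|a_{i,j} + 1| = \prod_i \Phi(-q_i/p_i) = \Phi(\mathbf{v})$. To show that distinct $\mathbf{r}$ produce non-isotopic $\xi(\mathbf{r})$, I would repeat the Seiberg--Witten argument of Proposition \ref{prop-notistpic}: an isotopy $\xi(\mathbf{r}) \simeq \xi(\mathbf{r}')$ extends into a symplectic collar, yielding two Weinstein fillings of a common contact manifold on the same $4$-manifold $X(\beta, \mathbf{v})$. Theorem \ref{thm-kromro} then forces the canonical $\mathrm{Spin}^c$ structures to coincide, so $c_1(W(\mathbf{r})) = c_1(W(\mathbf{r}'))$ in $H^2(X(\beta, \mathbf{v}); \Z)$. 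Pairing both sides against the $2$-handle class $[h^2_{U_{i,j}}]$ gives $r_{i,j} = r'_{i,j}$, since the Chern class of a Weinstein handlebody evaluates on each handle class as the rotation number of its Legendrian attaching sphere; thus $\mathbf{r} = \mathbf{r}'$. The step I expect to be the most delicate is the Legendrianization of $\widehat{\beta}$ as a \emph{link} rather than a knot: the worst-case $\mathrm{tb}$ estimate above is precisely why the hypothesis $c_{i,+} - 2c_{i,-} - d_{i,-} - m_i \geq 1$ (strictly stronger than its single-component analogue in Hypothesis \ref{hyp-braids}) is needed -- it absorbs any admissible global assignment of inter-component cusps without requiring a clever coordinated choice of zig-zag templates.
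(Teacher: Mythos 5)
Your proposal is correct and follows essentially the same route as the paper: the same slam-dunk conversion of each $p_i/q_i$-coefficient into a $0$-framed component plus a chain of meridians framed by the negative continued fraction entries, the same Legendrianization with zig-zag templates and the $d_{i,-}$ cusp bookkeeping giving $\mathrm{tb}(\widehat{\beta}_i) \geq c_{i,+} - 2c_{i,-} - d_{i,-} - m_i \geq 1$, the same application of Theorem \ref{thm-weinkirby} with the $\Phi(\mathbf{v})$ choices of Legendrian unknot rotation numbers, and the same Seiberg--Witten argument from Proposition \ref{prop-notistpic} to distinguish the resulting contact structures. No gaps to flag.
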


\begin{proof}The proof proceeds along the general strategy described in Section \ref{sec-pfmain}. Here we indicate the necessary modifications we have to make. 

\begin{enumerate}[leftmargin=*]
\item As in Section \ref{subsec-mfds}, we construct a {smooth} $4$-manifold-with-boundary $X$ such that $\partial X = M(\beta, \mathbf{v})$. This is accomplished by first writing the {smooth} surgery diagram for $M(\beta, \mathbf{v})$ given by:
$$\widehat{\beta} = \widehat{\beta}_1 \cup \cdots \cup \widehat{\beta}_\ell,$$
with surgery coefficients as the slope vector $\mathbf{v}$. We write $p_i/q_i = 0 - q_i/p_i$ and apply the slam-dunk operation to change surgery coefficients of $\widehat{\beta}_i$ to $0$, at the cost of introducing an {unknotted meridian} for $\widehat{\beta}_i$ with surgery coefficient $-q_i/p_i$, for all $1 \leq i \leq \ell$. {We express $-q_i/p_i$ as a terminating negative continued fraction expansion:
$$-q_i/p_i = [a_{i, 0}, a_{i, 1}, \cdots, a_{i, k_{i}}].$$
We further modify the surgery diagram so that all the coefficients are integral, by iterated applications of slam-dunk. Namely, replace the meridian of each component $\widehat{\beta_i}$ with surgery coefficient $-q_i/p_i$, by a chain of Hopf links with surgery coefficients $a_{i, 0}, a_{i, 1}, \cdots, a_{i, k_i}$ as in the continued fraction expansion.} The final {framed link} defines a Kirby diagram for the desired {smooth} $4$-manifold $X$.

\item {Next, we upgrade the smooth Kirby diagram in Step $(1)$ to a Weinstein-Kirby diagram. This entails choosing a Legendrian representative of the framed link at the end of Step $(1)$, so that the framing condition ``$f = \mathrm{tb} - 1$" in Theorem \ref{thm-weinkirby} holds. \\ \indent
To do this, we first use the technique from the proof of Theorem \ref{thm-weinsteinstr} to choose a Legendrian representative for each component $\widehat{\beta}_i$ with Thurston-Bennequin number $c_{i, +} - 2c_{i, -} - m_i$, for all $1 \leq i \leq \ell$. Then, for any pair of negatively crossing strands between $\beta_i$ and $\beta_j$, we choose a Legendrian realization as in any one of the two pictures in Figure \ref{fig-zigzag}. This procedure introduces a pair of cusps on (the Legendrian realization of) either $\widehat{\beta}_i$ or $\widehat{\beta}_j$. Thus, for each $1 \leq i \leq \ell$, this procedure introduces at most $2d_{i, -}$ cusps on the Legendrian realization of $\widehat{\beta}_i$. Therefore, at the end of the procedure, the Thurston-Bennequin number for $\widehat{\beta}_i$ is at least:
$$(c_{i, +} - 2c_{i, -} - m_i) - \frac{1}{2} \cdot 2d_{i, -} = c_{i, +} - 2c_{i, -} - d_{i, -} - m_i,$$
which is at least $1$ by hypothesis. We apply some number of stabilizations to ensure that the Thurston-Bennequin number is $1$, for all $1 \leq i \leq \ell$.}

\item {Finally, for each of the unknots occurring in the chain of Hopf links described in the framed link in Step $(1)$, we choose a Legendrian realization. This is done so that for any of the unknotted components occurring with framing $f$, the Thurston-Bennequin number is $f + 1$. This is possible, since $f$ occurs as one of the terms in the negative continued fraction expansion of $-q_i/p_i$ for some $1 \leq i \leq \ell$. Therefore, by definition, $f \leq -2$ (see Definition \ref{def-negcont}). \\ \indent
Observe that the possible rotation numbers of a Legendrian unknot with Thurston-Bennequin number $f + 1$ are $2k - f$, $1 \leq k \leq -f - 1$. Thus, there are $|f + 1|$ such choices. As these choices can be made independently on the rest of the components, there are at least $\Phi(\mathbf{v})$ many such choices in constructing the required Legendrian realization of the entire link.}

\item {Step $(2)$ and Step $(3)$ constructs at least $\Phi(\mathbf{v})$ distinct Legendrian realizations of the smooth framed link defining the smooth $4$-manifold $X$ in Step $(1)$. Thus, by Theorem \ref{thm-weinkirby}, we obtain at least $\Phi(\mathbf{v})$ potentially distinct Weinstein structures on $X$. In turn, that gives at least $\Phi(\mathbf{v})$ potentially distinct tight contact structures on $\partial X = M(\beta, \mathbf{v})$. The proof of Proposition \ref{prop-notistpic} now shows that these $\Phi(\mathbf{v})$ many contact structures are indeed pairwise-nonisotopic, since the tuple of rotation numbers of all the components disagree for any two such contact structures. This concludes the proof. \qedhere}

\end{enumerate}
\end{proof}

Let $r < -1$ be an irrational real number. Let $r = [a_0, a_1, \cdots]$ be a continued fraction expansion and let
$$-q_n/p_n = [a_0, a_1, \cdots, a_n]$$
be the $n$-th convergent. Then, we observe $|p_n|+|q_n| \to \infty$, and for infinitely many $i \geq 0$, $a_i \leq -3$. The last assertion follows {from the observation $-1 = [-2, -2, -2, \cdots]$ in Remark \ref{rmk-negcontrat}}, as otherwise $r$ would be rational. Therefore, $\Phi(-q_n/p_n) \to \infty$ as $n \to \infty$. Using this, we deduce the following corollary of Theorem~\ref{thm-eglinkrat}:

\begin{corollary}\label{cor-conthyprat}Let $\beta \in B_m$ be a braid with closure $\widehat{\beta} = \widehat{\beta}_1 \cup \cdots \cup \widehat{\beta}_\ell$ a hyperbolic link.  Suppose that {$\beta_i$ satisfies the condition {$c_{i, +} - 2c_{i, -} - d_{i, -} - m_i \geq 1$} for all $1 \leq i \leq \ell$.} Let $r_1, \cdots, r_{\ell} < -1$ be irrational numbers, and let $-q_{i, n}/p_{i, n}$ denote the $n$-th convergent of the negative continued fraction expansion of $r_i$. Let $\mathbf{v}_n = (p_{1, n}/q_{1, n}, \cdots, p_{\ell, n}/q_{\ell, n})$. Then for sufficiently large $n \geq 0$, $M(\beta, \mathbf{v}_n)$ is a hyperbolic manifold and $\#\mathrm{Tight}(M(\beta, \mathbf{v}_n)) \to \infty$ as $n \to \infty$.
\end{corollary}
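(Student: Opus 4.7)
The plan is to combine Theorem~\ref{thm-eglinkrat} with Thurston's hyperbolic Dehn surgery theorem, using the growth behavior of continued-fraction convergents established in the paragraph preceding the corollary. Essentially the statement has two distinct content words: \emph{hyperbolicity} of $M(\beta,\mathbf{v}_n)$ for $n$ large, and \emph{unboundedness} of $\#\mathrm{Tight}(M(\beta,\mathbf{v}_n))$.

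First I would check that the hypotheses of Theorem~\ref{thm-eglinkrat} are in force for every $\mathbf{v}_n$. The components of $\beta$ satisfy $c_{i,+}-2c_{i,-}-d_{i,-}-m_i \ge 1$ by assumption, so the only remaining hypothesis is that $0 < p_{i,n}/q_{i,n} < 1$ for each $i$. Since $-q_{i,n}/p_{i,n}=[a_{i,0},a_{i,1},\dots,a_{i,n}]$ is a negative continued fraction with each $a_{i,j}\le -2$, the value $-q_{i,n}/p_{i,n}$ is at most $-1$, and strictly less than $-1$ once $n\ge 1$ (or, trivially, by choosing positive representatives $p_{i,n},q_{i,n}>0$). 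This places $p_{i,n}/q_{i,n}$ in $(0,1)$ as required. Theorem~\ref{thm-eglinkrat} then yields
\[
\#\mathrm{Tight}\bigl(M(\beta,\mathbf{v}_n)\bigr)\;\ge\;\Phi(\mathbf{v}_n)\;=\;\prod_{i=1}^{\ell}\Phi\bigl(-q_{i,n}/p_{i,n}\bigr).
\]

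Next I would invoke the observation immediately preceding the corollary: since each $r_i$ is irrational, its negative continued fraction cannot be eventually constant equal to $-2$ (as $[-2,-2,\dots]=-1\neq r_i$), so infinitely many $a_{i,j}$ satisfy $a_{i,j}\le -3$, i.e.\ $|a_{i,j}+1|\ge 2$. Consequently $\Phi(-q_{i,n}/p_{i,n})\to\infty$ as $n\to\infty$ for each $i$, and hence $\Phi(\mathbf{v}_n)\to\infty$. This gives the second conclusion once hyperbolicity is known.

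For hyperbolicity I would apply Thurston's hyperbolic Dehn surgery theorem \cite[Chapter 4]{thurston-book} to the cusped hyperbolic manifold $S^3\setminus\widehat{\beta}$ (hyperbolic by the standing hypothesis on $\widehat{\beta}$). That theorem requires the filling slopes on every cusp to leave every finite subset of the slope space, i.e.\ $|p_{i,n}|+|q_{i,n}|\to\infty$ for each $1\le i\le \ell$. This again follows from irrationality of $r_i$: the denominators of continued-fraction convergents of an irrational number diverge. Hence, for $n$ sufficiently large, every component of $\mathbf{v}_n$ is a slope far from the finite excluded set on the corresponding cusp, and $M(\beta,\mathbf{v}_n)$ is hyperbolic. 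Combined with the lower bound on $\#\mathrm{Tight}$ above, this proves the corollary.

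The only non-mechanical step is verifying that the growth/irrationality inputs feed correctly into both Theorem~\ref{thm-eglinkrat} and Thurston's theorem simultaneously, but both reduce to the single elementary fact that negative continued-fraction convergents of an irrational $r<-1$ have unbounded denominators and contain infinitely many entries $\le -3$; there is no serious obstacle beyond that.
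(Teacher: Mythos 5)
Your proof follows essentially the same route as the paper: hyperbolicity via Thurston's Dehn surgery theorem using $|p_{i,n}|+|q_{i,n}|\to\infty$, and the lower bound $\Phi(\mathbf{v}_n)\to\infty$ from Theorem~\ref{thm-eglinkrat} together with the observation that irrationality of $r_i$ forces infinitely many partial quotients $a_{i,j}\le -3$. The one point you omit is that Theorem~\ref{thm-eglinkrat} only gives pairwise \emph{non-isotopic} tight structures, whereas the paper additionally runs the argument of Proposition~\ref{prop-notcntct} (Mostow rigidity plus finiteness of the isometry group of the hyperbolic link complement) to conclude that arbitrarily many of them are pairwise non-contactomorphic; if $\#\mathrm{Tight}$ is read as the count up to contactomorphism rather than up to isotopy, you need this extra step, which only divides your lower bound by a constant independent of $n$ and so does not affect divergence.
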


\begin{proof}
Since $\widehat{\beta}$ is a hyperbolic link and the surgery coefficients on each component satisfy the condition $|p_{i, n}| + |q_{i, n}| \to \infty$ as $n \to \infty$ for all $1 \leq i \leq \ell$, $M(\beta, \mathbf{v}_n)$ is hyperbolic for sufficiently large values of $n$, by Thurston's hyperbolic Dehn surgery theorem. We have established in Theorem~\ref{thm-eglinkrat} that $M(\beta, \mathbf{v}_n)$ has at least $\Phi(\mathbf{v}_n)$ non-isotopic contact structures. Since $\Phi(\mathbf{v}_n) \to \infty$, the proof of Proposition \ref{prop-notcntct} shows $M(\beta, \mathbf{v}_n)$ admits an arbitrarily large number of pairwise non-contactomorphic contact structures, as $n \to \infty$.
\end{proof}

\subsection{Rational surgery on links: {$p/q > 0$}}\label{subsec-surgknot}
In this section, we extend the results of Section \ref{subsec-surglink} from rational surgeries with coefficients in the interval $(0, 1)$ to \emph{all} positive rational surgeries. 

\subsubsection{The case of knots}\label{subsubsec-surgknot}
For ease of illustration, let us first restrict to the case of knots. We begin with the following key observation.

\begin{proposition}\label{prop-allposcont}Let $K \subset S^3$ be a smooth knot which admits a Legendrian realization $K_{Leg} \subset (S^3, \xi_{std})$ such that $tb(K_{Leg}) = 1$. For any rational number $r > 0$, the three-manifold $S^3_r(K)$ obtained by $r$-surgery along $K$ admits a tight (in fact, Weinstein-fillable) contact structure.  \end{proposition}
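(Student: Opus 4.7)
The plan is to produce $S^3_r(K)$ as the oriented boundary of a $4$-dimensional handlebody built from $D^4$ by attaching $2$-handles along a Legendrian link whose underlying smooth framed link is a surgery diagram for $r$-surgery on $K$, with each attaching Legendrian component satisfying framing $= \mathrm{tb} - 1$. Theorem~\ref{thm-weinkirby} will then endow the handlebody with a Weinstein structure, and the induced contact structure on its boundary $S^3_r(K)$ will be Weinstein-fillable (and hence tight, by Gromov--Eliashberg \cite{gromov-85, yasha-90}).

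First I would use iterated slam-dunk on multiple disjoint copies of the unknotted meridian of $K$ to reduce the general $r > 0$ case to a rational coefficient in $(0, 1)$, following the outline given in the paragraph preceding the Proposition. Since the open interval $(2r - 2,\, 2r)$ has length $2$, it always contains a non-negative integer $N$; set $k := r - N/2 \in (0, 1)$. I would then present $S^3_r(K)$ by the framed link consisting of $K$ with surgery coefficient $k$, together with $N$ disjoint unknotted meridians of $K$ each with surgery coefficient $-2$; applying the slam-dunk move \cite[Section 5.3]{gosti-99} once to each meridian recovers the original $r$-surgery on $K$.

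Next, emulating Step (1) of the proof of Theorem~\ref{thm-eglinkrat}, I would convert the rational coefficient $k$ on $K$ into integer data. Introduce one further unknotted meridian $V_0$ of $K$ with surgery coefficient $-1/k < -1$, reducing $K$'s coefficient to $0$. By Remark~\ref{rmk-negcontrat}, $-1/k$ has a terminating negative continued fraction expansion $[a_0, a_1, \cdots, a_s]$ with each $a_i \leq -2$, and repeated slam-dunks replace $V_0$ by a linear Hopf chain $V_0, V_1, \cdots, V_s$ of unknots with integer framings $a_0, a_1, \cdots, a_s$. The result is an integral Kirby diagram for $S^3_r(K)$ in which $K$ carries framing $0$ and every other component is an unknot with framing $\leq -2$.

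Finally, I would Legendrian realize this framed link by taking the hypothesized representative $K_{\mathrm{Leg}}$ with $\mathrm{tb}(K_{\mathrm{Leg}}) = 1$ for $K$ (so that the framing $0$ on $K$ equals $\mathrm{tb} - 1$), and, for each unknotted component of framing $f \leq -2$, by taking a Legendrian unknot of $\mathrm{tb} = f + 1$, placed as a standard Legendrian meridian around the appropriate predecessor and stabilized as needed. Since the maximum Thurston--Bennequin number of an unknot is $-1$, each such Legendrian unknot exists. The resulting Legendrian link satisfies the framing condition $f = \mathrm{tb} - 1$ on every component, so Theorem~\ref{thm-weinkirby} produces the desired Weinstein structure. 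The main obstacle is the first step: ensuring the reduction can be carried out using only meridional unknots of framing $-2$, which in turn rests on the elementary observation that $(2r - 2,\, 2r) \cap \Z_{\geq 0} \neq \emptyset$ for every $r > 0$. Everything else reduces either to Remark~\ref{rmk-negcontrat} or to standard Legendrian unknot bookkeeping.
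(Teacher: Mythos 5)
Your proposal follows essentially the same route as the paper's own proof: absorb the integer part of $r$ by adjoining $-2$-framed meridians of $K$, convert the leftover coefficient in $(0,1)$ into a chain of unknots with framings $\leq -2$ via Remark~\ref{rmk-negcontrat}, Legendrian realize with $K_{\mathrm{Leg}}$ carrying the $0$-framing and $\mathrm{tb}=f+1$ unknots elsewhere, and invoke Theorem~\ref{thm-weinkirby}. The gap is in the very first reduction, the step you yourself flag as the main obstacle: it is \emph{not} true that $K$ with coefficient $k=r-N/2$ together with $N$ disjoint $-2$-framed meridians of $K$ presents $S^3_r(K)$. The slam-dunk deletes a meridian (with arbitrary rational coefficient) of a component whose own coefficient is an \emph{integer}, and after one dunk that coefficient is no longer integral; the dunks cannot be performed one after another on parallel meridians, and their effects are not additive. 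Concretely, for $r=3/2$ your recipe gives $N=2$, $k=1/2$, and after introducing $V_0$ with coefficient $-1/k=-2$ the integral diagram is $K$ with framing $0$ together with three $-2$-framed parallel meridians; its linking matrix has determinant $\pm 12$, so the surgered manifold has $|H_1|=12$ for \emph{every} knot $K$, whereas $|H_1(S^3_{3/2}(K))|=3$. Already the simplest instance, a $0$-framed unknot with two $-2$-framed meridians, is the linear chain $(-2,0,-2)$, a lens space with $|H_1|=4$, not $S^3_{0+1/2+1/2}(U)=S^3$. Geometrically, hanging several meridional branches off $K$ glues a star-shaped plumbing piece to the knot exterior rather than performing a single Dehn filling, so no choice of position rescues the claimed presentation. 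When $N=0$, which is possible only for $r<1$, your argument collapses to the correct single-chain argument of Section~\ref{subsec-surglink}, so the gap concerns exactly the range $r\geq 1$ that this proposition is meant to add.

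To be fair, this is not a defect you introduced: the paper's proof asserts the same additivity, writing $r=n+p/q$ as $0-\tfrac{1}{-2}-\cdots-\tfrac{1}{-2}-1/[a_0,\dots,a_k]$ with $2n$ parallel $-2$-framed meridians, and the same $H_1$ count shows that framed link does not present $S^3_r(K)$ either; you have reproduced the paper's strategy faithfully, problematic step included. A complete argument for $r\geq 1$ needs something more: for instance, an integral diagram in which $K$ itself carries the framing $\lceil r\rceil-1\geq 1$ (which by Theorem~\ref{thm-weinkirby} requires a Legendrian representative with $\mathrm{tb}\geq\lceil r\rceil$, not merely $\mathrm{tb}=1$), or a different mechanism for large positive surgery coefficients altogether. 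As written, your proposal (taking $N=0$) establishes the statement only for $0<r<1$, where it coincides with the paper's Section~\ref{subsec-surglink} argument.
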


\begin{proof}Suppose $r = n + p/q$, where $n$ is a non-negative integer and $p/q \in (0, 1)$. Let 
$$-q/p = [a_0, a_1, \cdots, a_k],$$
be a terminating negative continued fraction expansion, with $a_i \leq -2$ for all $1 \leq i \leq k$. Let $U_0, U_1, \cdots, U_k \subset S^3$ be unknots such that $U_0$ is a meridian of $K$, and $U_i$ is a meridian of $U_{i-1}$ for all $1 \leq i \leq k$. Moreover, let $\mu_1, \mu_2, \cdots, \mu_{2n}$ be $2n$ distinct parallel copies of a meridian of $K$, all distinct from $U_0$ and unlinked from $U_i$ for all $0 \leq i \leq k$. Consider the framed link,
$$L = K \cup U_0 \cup U_1 \cup \cdots \cup U_k \cup \mu_1 \cup \mu_2 \cup \cdots \cup \mu_{2n},$$ 
where $K$ has framing $0$, $U_i$ has framing $a_i$ and $\mu_j$ has framing $-2$. Surgery on each component of $L$ with the prescribed framing coefficients, yields the manifold $S^3_r(K)$. Indeed, this follows from iterated applications of the slam-dunk operation, as 
$$r = n + \frac{p}{q} = 0 - \underbrace{\frac{1}{-2} - \cdots - \frac{1}{-2}}_\text{$2n$ times} - \cfrac{1}{a_0 -\cfrac{1}{\cdots - \cfrac{1}{a_k}}}.$$
It suffices to produce a Legendrian realization of the link $L$ so that the framing of each component satisfies the condition in Theorem \ref{thm-weinkirby}. We achieve this by:
\begin{enumerate}
\item Choosing the Legendrian realization of $K$ to be $K_{\mathrm{Leg}}$, so that $\mathrm{tb}(K_{\mathrm{Leg}}) = 1$,
\item Choosing the Legendrian realization of $U_i$ to be the Legendrian unknot with Thurston-Bennequin number $a_i + 1$.
\item Choosing the Legendrian realization of $\mu_j$ to be the standard Legendrian unknot with Thurston-Bennequin number $-1$.
\end{enumerate}
By Theorem \ref{thm-weinkirby}, this produces a Weinstein $4$-manifold with boundary $S^3_r(K)$. Thus, $S^3_r(K)$ admits a Weinstein-fillable, hence tight, contact structure.
\end{proof}

\begin{remark}\label{rmk-manycontsrat}The proof of Proposition \ref{prop-allposcont} and the discussion in Section \ref{subsec-surglink} shows that $S^3_{n+p/q}(K)$ admits at least $\Phi(-q/p)$ tight  contact structures {that are pairwise non-isotopic}. \end{remark}

\begin{corollary}\label{cor-allposknot} Let $\beta \in B_m$ be a braid satisfying Conditions (1) and (2) of Hypothesis \ref{hyp-braids}. For any rational number $r > 0$, the three-manifold $S^3_r(\widehat{\beta})$ obtained by $r$-surgery along $\widehat{\beta}$ admits a tight contact structure. In particular, for all but finitely many $r > 0$, $S^3_r(\widehat{\beta})$ is a hyperbolic rational homology sphere admitting a tight contact structure.\end{corollary}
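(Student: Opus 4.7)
The plan is to invoke Proposition~\ref{prop-allposcont} directly. The only thing to verify is the hypothesis that $\widehat{\beta}$ admits a Legendrian realization with Thurston--Bennequin number equal to $1$. First, I would run the front-projection construction from the proof of Theorem~\ref{thm-weinsteinstr}: draw $\widehat{\beta}$ as a braid closure with the arcs $\alpha_1,\dots,\alpha_m$ realized as cusps, and resolve each negative crossing by inserting a pair of cusps as in Figure~\ref{fig-zigzag}. By the same count as in Theorem~\ref{thm-weinsteinstr}, the resulting Legendrian knot $K'$ satisfies
\[
\mathrm{tb}(K') = c_+ - 2c_- - m.
\]
Condition (2) of Hypothesis~\ref{hyp-braids} guarantees $\mathrm{tb}(K') \geq 1$, so I may apply $\mathrm{tb}(K') - 1$ Legendrian stabilizations to obtain a Legendrian representative $K_{\mathrm{Leg}}$ of $\widehat{\beta}$ with $\mathrm{tb}(K_{\mathrm{Leg}}) = 1$. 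Crucially, I do not need Condition (3) here, because the rotation number plays no role in the hypothesis of Proposition~\ref{prop-allposcont}.

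Applying Proposition~\ref{prop-allposcont} to $K = \widehat{\beta}$ with this $K_{\mathrm{Leg}}$ immediately yields a Weinstein-fillable, hence tight, contact structure on $S^3_r(\widehat{\beta})$ for every rational $r>0$. This handles the main assertion.

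For the final sentence, I would argue as follows. By Condition (1) of Hypothesis~\ref{hyp-braids}, the knot complement $S^3\setminus\widehat{\beta}$ is hyperbolic, so Thurston's hyperbolic Dehn surgery theorem \cite[Chapter 4]{thurston-book} implies that $S^3_r(\widehat{\beta})$ is hyperbolic for all but finitely many slopes $r$. Writing $r=p/q$ with $(p,q)=1$ and $p,q>0$, a standard Mayer--Vietoris computation gives $H_1(S^3_r(\widehat{\beta});\Z) \cong \Z/p\Z$, so each such manifold is a rational homology sphere, and combining with the previous paragraph completes the proof. I do not foresee any serious obstacle: the corollary is essentially a packaging of Proposition~\ref{prop-allposcont} together with the $\mathrm{tb}=1$ Legendrian construction of Section~\ref{subsec-cntstr}, observing that the more restrictive parity Condition (3) is not required.
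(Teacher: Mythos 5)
Your proposal is correct and follows essentially the same route as the paper: both use the front-projection construction of Theorem~\ref{thm-weinsteinstr} with Condition (2) to get a Legendrian realization with $\mathrm{tb} = c_+ - 2c_- - m \geq 1$, stabilize down to $\mathrm{tb}=1$, apply Proposition~\ref{prop-allposcont}, and deduce hyperbolicity from Condition (1) via Thurston's hyperbolic Dehn surgery theorem. Your explicit remarks that Condition (3) (the parity/rotation-number condition) is unnecessary here and that $H_1(S^3_{p/q}(\widehat{\beta}))\cong\Z/p\Z$ are both accurate, matching what the paper leaves implicit.
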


\begin{proof}Let $K = \widehat{\beta}$. The proof of Theorem \ref{thm-weinsteinstr} shows that $K$ admits a Legendrian realization with Thurston-Bennequin number $c_{+} - 2c_{-} - m$, which is at least $1$ by Condition $(2)$ of Hypothesis \ref{hyp-braids}. Thus, we may stabilize some number of times to obtain a Legendrian realization with Thurston-Bennequin number $1$. Proposition \ref{prop-allposcont} now shows that all positive rational surgeries on $\widehat{\beta}$ admit tight contact structures. 

Since $\beta$ satisfies Condition $(1)$ of Hypothesis \ref{hyp-braids}, $\widehat{\beta}$ is a hyperbolic knot. Therefore, by Thurston's hyperbolic Dehn surgery theorem, $S^3_r(\widehat{\beta})$ is a hyperbolic manifold for all but finitely many $r > 0$.\end{proof}

\subsubsection{The case of links}
We now indicate the appropriate generalization to the case of links. We begin by observing that the key Proposition \ref{prop-allposcont} generalizes to links as follows.

\begin{lemma}\label{lem-allposcontlink} Let $L = K_1 \cup \cdots \cup K_{\ell} \subset S^3$ be a smooth link. Suppose $L$ admits a Legendrian realization in $(S^3, \xi_{std})$ such that each component $K_i$ has Thurston-Bennequin number $1$. Let $\mathbf{r} = (r_1, \cdots, r_{\ell})$ be a slope vector such that $r_i > 0$. The manifold $S^3_{\mathbf{r}}(L)$ obtained by $r_i$-surgery on $K_i$ (for all $i$), admits a tight contact structure.\end{lemma}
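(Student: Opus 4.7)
The plan is to mimic the proof of Proposition \ref{prop-allposcont} componentwise. For each $1 \leq i \leq \ell$, write $r_i = n_i + p_i/q_i$ with $n_i \geq 0$ an integer and $p_i/q_i \in [0, 1)$, and expand
$$-q_i/p_i = [a_{i, 0}, a_{i, 1}, \ldots, a_{i, k_i}]$$
as a terminating negative continued fraction with each $a_{i, j} \leq -2$ (using Remark \ref{rmk-negcontrat}). In a small tubular neighborhood of $K_i$ disjoint from all other components, choose a chain of unknots $U_{i, 0}, U_{i, 1}, \ldots, U_{i, k_i}$, where $U_{i, 0}$ is a meridian of $K_i$ and $U_{i, j}$ is a meridian of $U_{i, j-1}$ for $j \geq 1$, together with $2n_i$ additional parallel meridians $\mu_{i, 1}, \ldots, \mu_{i, 2n_i}$ of $K_i$ all disjoint from the $U_{i, j}$'s. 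Form the smooth framed link
$$L' = \bigcup_{i = 1}^{\ell} \left( K_i \cup U_{i, 0} \cup \cdots \cup U_{i, k_i} \cup \mu_{i, 1} \cup \cdots \cup \mu_{i, 2n_i} \right),$$
with framing $0$ on each $K_i$, framing $a_{i, j}$ on each $U_{i, j}$, and framing $-2$ on each $\mu_{i, s}$. Iterated slam-dunks, applied independently within each tubular neighborhood, convert this framed link to the original surgery description of $S^3_{\mathbf{r}}(L)$, exactly as in the knot case.

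Next I would upgrade this smooth Kirby diagram to a Weinstein-Kirby diagram. Since by hypothesis each $K_i$ admits a Legendrian realization with $\mathrm{tb}(K_i) = 1$, we may realize all the $K_i$'s simultaneously as a Legendrian link (shrinking the meridian neighborhoods first ensures the auxiliary unknots can be introduced afterwards without interference). Inside each tubular neighborhood, realize $U_{i, j}$ as a standard Legendrian unknot with $\mathrm{tb}(U_{i, j}) = a_{i, j} + 1$ (possible since $a_{i, j} + 1 \leq -1$), and realize each $\mu_{i, s}$ as the standard Legendrian unknot with $\mathrm{tb} = -1$. By construction, on every component the smooth framing equals $\mathrm{tb} - 1$, so Theorem \ref{thm-weinkirby} applies and produces a Weinstein structure on a $4$-manifold whose boundary is $S^3_{\mathbf{r}}(L)$.

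The resulting Weinstein-fillable contact structure on $S^3_{\mathbf{r}}(L)$ is tight by Gromov \cite{gromov-85} and Eliashberg \cite{yasha-90}. The only subtlety, and what I expect to be the main technical obstacle, is justifying that one can realize the entire auxiliary linking data (the chains of meridians of meridians and the $2n_i$ parallel meridians for all $i$) as an honest Legendrian link in $(S^3, \xi_{\mathrm{std}})$ with the prescribed Thurston-Bennequin numbers and mutually disjoint from the Legendrian realization of $L$. However, this is purely local: once the components $K_i$ are fixed as a Legendrian link, a standard contact neighborhood $\nu(K_i)$ contains Legendrian realizations of arbitrarily small push-offs of $K_i$ (used as the meridians $\mu_{i, s}$) as well as meridional Legendrian unknots of arbitrary Thurston-Bennequin number at most $-1$ that link $K_i$ or each other once, so the required Legendrian configuration exists. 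This completes the construction and the proof.
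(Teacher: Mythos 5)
Your proposal is correct and takes essentially the same route as the paper: the paper's entire proof of this lemma is the remark that it is ``a straightforward application of the slam-dunk technique used in the proof of Proposition~\ref{prop-allposcont}, applied to each component of $L$,'' which is precisely the componentwise construction (chains of meridians framed by the negative continued fraction coefficients, $2n_i$ parallel $-2$-framed meridians, and Theorem~\ref{thm-weinkirby}) that you spell out. The only cosmetic point is the edge case where some $r_i$ is a positive integer, so that $p_i/q_i=0$ and the expression $-q_i/p_i$ is undefined; there one simply omits the chain $U_{i,0},\dots,U_{i,k_i}$ and keeps only the $2n_i$ meridians, exactly as is implicit in the paper's Proposition~\ref{prop-allposcont}.
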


The proof is a straightforward application of the slam-dunk technique used in the proof of Proposition \ref{prop-allposcont}, applied to each component of $L$. 

Next, we specialize Lemma \ref{lem-allposcontlink} to the case of braids $\beta \in B_m$ appearing in Section \ref{subsec-surglink}. Recall that the cycle decomposition of the permutation corresponding to $\beta$ gives rise to braids $\beta_i \in B_{m_i}$ such that $\widehat{\beta}_i \subset S^3$ is a knot, and $\widehat{\beta} = \widehat{\beta}_1 \cup \cdots \cup \widehat{\beta}_\ell \subset S^3$. Let $c_{i, +}, c_{i, -}, d_{i, -}$ be the quantities defined in the beginning of Section \ref{subsec-surglink}. Then we have the following by a similar argument.

\begin{corollary}\label{cor-allposlink}Let $\beta \in B_m$ be a braid with closure $\widehat{\beta} = \widehat{\beta}_1 \cup \cdots \cup \widehat{\beta}_\ell$ a hyperbolic link, such that ${\beta}_i$ satisfies the condition $c_{i, +} - 2c_{i, -} - d_{i, -} - m_i \geq 1$ for all $1 \leq i \leq \ell$. Let $\mathbf{v} = (r_1, \cdots, r_\ell)$ be a slope vector such that $r_i > 0$ for all $1 \leq i \leq \ell$. Then the manifold $M(\beta, \mathbf{v})$ (Definition \ref{def-slopevec}) admits a tight contact structure. In particular, for all but finitely many such slope vectors $\mathbf{v}$, $M(\beta, \mathbf{v})$ is a hyperbolic rational homology sphere admitting a tight contact structure.\end{corollary}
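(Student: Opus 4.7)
The plan is to proceed in direct parallel with the argument for Corollary~\ref{cor-allposknot}, but applied componentwise and incorporating the book-keeping for cross-component crossings that was already set up in the proof of Theorem~\ref{thm-eglinkrat}. The central task is to produce a Legendrian realization of the link $\widehat{\beta} = \widehat{\beta}_1 \cup \cdots \cup \widehat{\beta}_\ell$ in $(S^3, \xi_{\mathrm{std}})$ such that every component has Thurston--Bennequin number exactly $1$, whereupon Lemma~\ref{lem-allposcontlink} immediately yields a tight contact structure on $M(\beta, \mathbf{v})$ for any positive slope vector $\mathbf{v}$.

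To build such a Legendrian realization, I would proceed as in Step~(2) of the proof of Theorem~\ref{thm-eglinkrat}. First, draw a planar diagram for $\widehat{\beta}$ exactly as in Figure~\ref{fig-lag}, realizing each arc $\alpha_j$ closing the braid by a pair of cusps of opposite orientation. Next, for every negative crossing whose two strands belong to the same sub-braid $\beta_i$, and also for every negative crossing whose two strands lie in distinct sub-braids $\beta_i$ and $\beta_j$, apply one of the two local replacements of Figure~\ref{fig-zigzag} to obtain a Legendrian front, alternating between the two choices for sign-of-rotation control. For a fixed component $\widehat{\beta}_i$, this procedure contributes $2m_i$ cusps from the closure arcs, $2c_{i,-}$ cusps from intra-component negative crossings, and at most $2d_{i,-}$ cusps from inter-component negative crossings. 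Thus the resulting Thurston--Bennequin number of the Legendrian realization of $\widehat{\beta}_i$ is at least
\[
c_{i,+} - c_{i,-} - \tfrac{1}{2}\bigl(2m_i + 2c_{i,-} + 2d_{i,-}\bigr) = c_{i,+} - 2c_{i,-} - d_{i,-} - m_i \geq 1,
\]
by hypothesis. Stabilizing each component the appropriate number of times brings all Thurston--Bennequin numbers down to exactly $1$.

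With this Legendrian realization in hand, Lemma~\ref{lem-allposcontlink} directly produces a Weinstein domain whose boundary is $M(\beta, \mathbf{v})$, and in particular a Weinstein-fillable (hence tight) contact structure on $M(\beta, \mathbf{v})$, for any slope vector $\mathbf{v} = (r_1, \ldots, r_\ell)$ with $r_i > 0$. Finally, since $\widehat{\beta}$ is hyperbolic by hypothesis, Thurston's hyperbolic Dehn surgery theorem ensures that $M(\beta, \mathbf{v})$ is a hyperbolic manifold for all but finitely many choices of slope vector $\mathbf{v}$ (each $r_i$ avoiding a finite exceptional set of slopes on the corresponding cusp), and since each $r_i \in \mathbb{Q}$, the resulting closed manifold is a rational homology sphere.

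I do not expect any genuine obstacle in this argument: all the technical ingredients --- the Legendrian realization of braid closures, the cusp accounting for mixed-component crossings, the slam-dunk reduction for positive rational surgery coefficients, and the Weinstein-Kirby calculus criterion of Theorem~\ref{thm-weinkirby} --- are already in place from Sections~\ref{subsec-cntstr}, \ref{subsec-surglink}, and \ref{subsubsec-surgknot}. The only bookkeeping subtlety is to ensure that the cusps introduced at inter-component negative crossings are distributed between the two crossing components so as not to spoil the Thurston--Bennequin estimate for either one; the hypothesis $c_{i,+} - 2c_{i,-} - d_{i,-} - m_i \geq 1$ is designed precisely so that, no matter how the cusps are split, each component still has nonnegative-after-stabilization Thurston--Bennequin slack.
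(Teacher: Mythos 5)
Your proposal is correct and follows essentially the same route as the paper: the paper likewise obtains the componentwise Legendrian realization with $\mathrm{tb}(\widehat{\beta}_i) \geq c_{i,+} - 2c_{i,-} - d_{i,-} - m_i \geq 1$ from Step~(2) of the proof of Theorem~\ref{thm-eglinkrat}, stabilizes each component to $\mathrm{tb}=1$, invokes Lemma~\ref{lem-allposcontlink} (the slam-dunk argument of Proposition~\ref{prop-allposcont} applied componentwise), and concludes hyperbolicity for all but finitely many slopes on each cusp via Thurston's hyperbolic Dehn surgery theorem. Your cusp bookkeeping, including the worst-case distribution of the $2d_{i,-}$ cusps from inter-component negative crossings, matches the paper's argument exactly.
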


\subsection{Tight contact structures on hyperbolic $L$-spaces}\label{subsec-lspace}

In \cite[Definition 1.1]{os-05} Oszv\'ath and Szab\'o defined an \emph{$L$-space} to be a rational homology three-sphere $M$ for which
$$\mathrm{rk}\,\widehat{HF}(M) = |H_1(M; \Bbb Z)|,$$ 
where $\widehat{HF}$ is the hat flavor of Heegaard-Floer homology. It follows from the works of \cite{os-04, karo-17, bow-16} that $L$-spaces do not admit co-orientable taut foliations. In particular, it is not possible to produce tight contact structures on these manifolds by confoliations \'a la Eliashberg-Thurston \cite{elithu-98}. In this section, we give a recipe to produce examples of hyperbolic $L$-spaces admitting arbitrarily many pairwise non-contactomorphic tight contact structures, as an application of the results in Section \ref{subsec-surgknot}. 


We begin by fixing some notation. Recall from Section \ref{sec-braidclosure} that for a braid $\beta \in B_m$, the closure $\widehat{\beta} \subset S^3$ sits inside a solid torus $S^1 \times D^2 \subset S^3$ that is one of the two handlebodies of a chosen genus $1$ Heegaard decomposition of $S^3$. Let $U \subset D^2 \times S^1$ be the core of the other handlebody. If $\beta$ is pseudo-Anosov, $S^3 \setminus (U \cup \widehat{\beta}) \cong (S^1 \times D^2) \setminus \widehat{\beta}$ is a hyperbolic three-manifold. Recall also the Garside element $\Delta \in B_m$ defined by 
$$\Delta = (\sigma_1 \sigma_2 \cdots \sigma_{m-1}) (\sigma_1 \sigma_2 \cdots \sigma_{m-2}) \cdots (\sigma_1 \sigma_2) \sigma_1.$$
We say a knot $K \subset S^3$ is an \emph{$L$-space knot} if for all sufficiently large rational $r > 0$, the manifold $S^3_r(K)$ obtained by Dehn $r$-surgery on $K$ is an $L$-space.


\begin{proposition}\label{prop-lspace}
Let $\beta \in B_m$ be a pseudo-Anosov braid such that $\widehat{\beta}$ is a knot. Let $M$ be the three-manifold given by $(0, k)$-surgery on the link $U \cup \widehat{\beta} \subset S^3$, see Figure \ref{fig-twistknot} (note that $M$ depends on $k$, but we suppress this from the notation for clarity). Suppose 
\begin{enumerate}[label=\normalfont(\arabic*)]
\item $M$ is an $L$-space for all sufficiently large integers $k > 0$,
\item $\Delta^2 \beta$ is an $L$-space knot.
\end{enumerate}
Then for sufficiently large $\ell > 0$, $\widehat{\Delta^{2l} \beta}$ is a hyperbolic $L$-space knot.
\end{proposition}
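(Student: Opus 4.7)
The plan is to prove two statements separately for $\ell$ sufficiently large: (a) $\widehat{\Delta^{2\ell}\beta}$ is hyperbolic, and (b) $\widehat{\Delta^{2\ell}\beta}$ is an L-space knot.

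Part (a) is a direct application of Thurston's hyperbolic Dehn surgery theorem. Since $\beta$ is pseudo-Anosov, the two-cusped manifold $S^3 \setminus (U \cup \widehat{\beta}) \cong (S^1 \times D^2) \setminus \widehat{\beta}$ is complete and finite-volume hyperbolic, as reviewed in Section~\ref{sec-braidclosure}. The unknot $U$ is the braid axis, and $-1/\ell$ Dehn filling on $\partial \nu(U)$ recovers $S^3$ while introducing $\ell$ full twists on $\widehat{\beta}$, transforming it into $\widehat{\Delta^{2\ell}\beta}$; compare the discussion after Hypothesis~\ref{hyp-braids}. Thus $S^3 \setminus \widehat{\Delta^{2\ell}\beta}$ is hyperbolic for all but finitely many $\ell$.

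For part (b), fix a large positive integer $k$ and set $Y_k := S^3_k(\widehat{\beta})$, so $U$ remains a knot in $Y_k$. For a slope $s$ on $\partial\nu(U)$ (using the Seifert framing of $U$ in $S^3$), slam-dunk identifies the Dehn filling $Y_k(U)_s$ with $S^3_{s,k}(U \cup \widehat{\beta})$, so in particular $Y_k(U)_{-1/\ell} = S^3_k(\widehat{\Delta^{2\ell}\beta})$. Hypothesis (1) gives that $Y_k(U)_0 = M$ is an L-space (for $k$ large), while hypothesis (2) together with the standard blow-down relation $Y_k(U)_{-1} = S^3_k(\widehat{\Delta^2\beta})$ gives that $Y_k(U)_{-1}$ is an L-space (for $k$ large). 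I would then induct on $\ell$ using the Ozsv\'ath--Szab\'o surgery exact triangle: the slopes $-1/\ell$ and $0/1$ are Farey-adjacent (since $|(-1)(1)-(0)(\ell)|=1$) with mediant $-1/(\ell{+}1)$, so the triangle
\[
\widehat{HF}(Y_k(U)_{-1/\ell}) \to \widehat{HF}(Y_k(U)_{-1/(\ell+1)}) \to \widehat{HF}(Y_k(U)_0) \to \cdots
\]
yields $\operatorname{rk}\widehat{HF}(Y_k(U)_{-1/(\ell+1)}) \leq \operatorname{rk}\widehat{HF}(Y_k(U)_{-1/\ell}) + \operatorname{rk}\widehat{HF}(Y_k(U)_0)$.

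The crux is that this rank bound is saturated. Letting $m = \operatorname{lk}(U,\widehat{\beta})$ equal the number of braid strands, the presentation matrix $\left(\begin{smallmatrix} -1 & jm \\ m & k \end{smallmatrix}\right)$ for $(-1/j, k)$-surgery on $U \cup \widehat{\beta}$ gives $|H_1(Y_k(U)_{-1/j})| = k + jm^2$, while $|H_1(Y_k(U)_0)| = m^2$; hence
\[
|H_1(Y_k(U)_{-1/(\ell+1)})| = |H_1(Y_k(U)_{-1/\ell})| + |H_1(Y_k(U)_0)|.
\]
Combining this identity with the universal bound $\operatorname{rk}\widehat{HF}(N) \geq |H_1(N)|$ (with equality iff $N$ is an L-space) and the inductive hypothesis forces equality throughout, so $Y_k(U)_{-1/(\ell+1)}$ is an L-space. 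Starting from the base case $\ell=1$, induction establishes that $S^3_k(\widehat{\Delta^{2\ell}\beta})$ is an L-space for every $\ell \geq 1$ and every sufficiently large integer $k$. Ozsv\'ath--Szab\'o's characterization of L-space knots then upgrades the existence of such integer L-space surgeries to all sufficiently large rational surgeries being L-spaces, completing (b). The main delicacy is the homology-sum identity above, without which the surgery triangle's rank inequality would fail to be saturated and the induction would not close; fortunately the structure of the linking matrix of the two-component link $U \cup \widehat{\beta}$ makes the identity immediate.
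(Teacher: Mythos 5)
Your argument is essentially the paper's: what you prove by hand via the surgery exact triangle plus the rank count $\mathrm{rk}\,\widehat{HF}(N)\geq |H_1(N)|$ is exactly the cited input \cite[Proposition 2.1]{os-05}, applied to the same triad of closed manifolds (the paper parametrizes them as fillings of a meridian $V$ of $U$ inside $M$ rather than as fillings of $U$ inside $Y_k$, which is equivalent), with the same additivity $k+(\ell+1)m^2=(k+\ell m^2)+m^2$, the same base cases from hypotheses (1) and (2), and the same Thurston Dehn-filling argument for hyperbolicity. One bookkeeping slip: Rolfsen twisting changes the surgery coefficient, so $Y_k(U)_{-1/\ell}=S^3_{k+\ell m^2}(\widehat{\Delta^{2\ell}\beta})$ and $Y_k(U)_{-1}=S^3_{k+m^2}(\widehat{\Delta^{2}\beta})$, not $S^3_k(\cdot)$; this is harmless here because your $|H_1|$ computations come from the link presentation matrix and $k$ ranges over all sufficiently large integers, so the conclusion that all large integer (hence, by the standard characterization, all large rational) surgeries are $L$-spaces is unaffected.
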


\begin{figure}[h]
	\centering
	\includegraphics[scale=0.35]{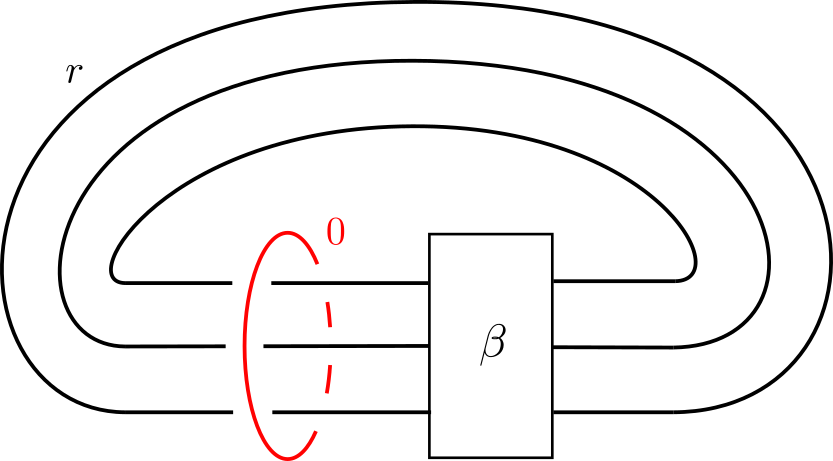}
	\caption{Surgery description of $M$. The red unknotted component is $U$.}
	\label{fig-twistknot}
\end{figure}

\begin{proof}
Let $V \subset S^3 \setminus (U \cup \widehat{\beta})$ be an unknot homotopic to the meridian of $U$, and let $\lambda$ denote the Seifert framing of $V \subset S^3$. The framed knot $(V, \lambda) \subset S^3 \setminus (U \cup \widehat{\beta})$ gives rise to a framed knot in the surgered manifold $M$. By a slight abuse of notation, we also denote this framed knot as $(V, \lambda) \subset M$. Let $\mu$ denote the meridian of $V \subset M$.

Observe that for $\ell \geq 1$, $M_{\lambda + \ell \mu}(V) = S^3_{k + \ell m^2}(\widehat{\Delta^{2\ell} \beta})$. Indeed, the $(\ell, 0, k)$-surgery on the link $V \cup U \cup \widehat{\beta} \subset S^3$ is diffeomorphic to the $(-1/\ell, k)$-surgery on $U \cup \widehat{\beta} \subset S^3$ by the slam-dunk operation. By applying $\ell$ Rolfsen twists \cite[Section 5.3, p. 162]{gosti-99} about $U$, we see that this is in turn equivalent to $(k + \ell m^2)$-surgery along $\widehat{\Delta^{2\ell}\beta} \subset S^3$.

For $\ell \geq 1$, $|H_1(M_{\lambda + \ell \mu}(V); \Bbb Z)| = |H_1(S^3_{k + \ell m^2}(\widehat{\Delta^{2\ell} \beta}); \Bbb Z)| = k + \ell m^2$. Also, 
$$|H_1(M)| = \left | \det \begin{pmatrix}0 & m \\ m & k\end{pmatrix} \right | = m^2.$$
Therefore, we have $|H_1(M)| + |H_1(M_{\lambda + \ell \mu}(V))| = |H_1(M_{\lambda + (\ell+1)\mu}(V)|$. 

By hypothesis, $M$ is an $L$-space. Moreover, $M_{\lambda + \mu}(V) = S^3_{k+m^2}(\widehat{\Delta^2 \beta})$. Since $\Delta^2 \beta$ is an $L$-space knot, $M_{\lambda + \mu}(V)$ is also an $L$-space for sufficiently large $k > 0$. By inductively applying \cite[Proposition 2.1]{os-05}, we conclude that $M_{\lambda + \ell \mu}(V)$ is an $L$-space for all $k, \ell > 0$ sufficiently large. Therefore, $\widehat{\Delta^{2l} \beta}$ is an $L$-space knot for $\ell > 0$ sufficiently large. 

Since $S^3 \setminus \widehat{\Delta^{2l} \beta}$ is obtained from the hyperbolic manifold $S^3 \setminus (U \cup \widehat{\beta})$ by $(-1/\ell)$-Dehn filling of the cusp $U$, it is also hyperbolic for $\ell > 0$ large by Thurston's hyperbolic Dehn surgery theorem \cite[Chapter 4]{thurstonnotes}. Therefore, $\widehat{\Delta^{2l} \beta}$ is a hyperbolic knot for $\ell > 0$ sufficiently large.
\end{proof}

\begin{proposition}\label{prop-lspacetight}Let $\beta \in B_m$ be a braid satisfying all the hypothesis in Proposition \ref{prop-lspace}. Then for sufficiently large integers $\ell, k > 0$,
$$\# \mathrm{Tight}(S^3_{k + 1/n}(\widehat{\Delta^{2\ell} \beta})) \to \infty$$
\end{proposition}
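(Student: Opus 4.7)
The plan is to combine Corollary~\ref{cor-allposknot} (sharpened by the counting refinement in Remark~\ref{rmk-manycontsrat}) with the Mostow-rigidity argument from Proposition~\ref{prop-notcntct}, applied to the hyperbolic knot $K := \widehat{\Delta^{2\ell}\beta}$. First, for $\ell$ sufficiently large the braid $\Delta^{2\ell}\beta$ satisfies Conditions (1) and (2) of Hypothesis~\ref{hyp-braids}: Condition (1) is the conclusion of Proposition~\ref{prop-lspace}, while Condition (2) holds because $\Delta$ consists of $m(m-1)/2$ positive generators, so
$$c_+(\Delta^{2\ell}\beta) = c_+(\beta) + \ell\, m(m-1), \qquad c_-(\Delta^{2\ell}\beta) = c_-(\beta),$$
and hence $c_+ - 2c_- - m \to \infty$ with $\ell$. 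This validates applying the construction of Proposition~\ref{prop-allposcont} to $K$ with the positive rational surgery slope $k + 1/n$.

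Next I would extract the count for this particular slope. Writing $k + 1/n = k + p/q$ with $p/q = 1/n \in (0,1)$, the terminating negative continued fraction expansion of $-q/p = -n$ is simply $[-n]$, so $\Phi(-n) = |{-n}+1| = n-1$. Thus Remark~\ref{rmk-manycontsrat} produces at least $n-1$ pairwise non-isotopic Weinstein-fillable (hence tight) contact structures on $S^3_{k+1/n}(K)$; they are distinguished, exactly as in the proof of Proposition~\ref{prop-notistpic} via the Kronheimer-Mrowka monopole invariant, by the rotation number of the Legendrian realization of the unknotted meridian with framing $-n$, which takes $n-1$ distinct values.

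Finally, I would upgrade non-isotopic to non-contactomorphic, following Proposition~\ref{prop-notcntct}. By Thurston's hyperbolic Dehn surgery theorem, for $n$ large $S^3_{k+1/n}(K)$ is hyperbolic and the core of the filled solid torus is its unique shortest geodesic (its length tends to zero as the normalized length of the slope $k + 1/n$ grows like $n$). Any self-contactomorphism carrying one of the above contact structures to another must either be isotopic to the identity (impossible, as they are non-isotopic) or represent a nontrivial mapping class, which by Mostow rigidity together with \cite{gameth-03} is realized by an isometry preserving this shortest geodesic. Drilling out the geodesic then yields an injection $\mathrm{MCG}(S^3_{k+1/n}(K)) \hookrightarrow \mathrm{Isom}(S^3 \setminus K)$, and the target is a finite group of some order $c$ depending only on $\beta$ and $\ell$. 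Hence at least $(n-1)/c$ of the contact structures above are pairwise non-contactomorphic, proving $\#\mathrm{Tight}(S^3_{k+1/n}(K)) \to \infty$. The proof is essentially an assembly of earlier results and I do not foresee a serious obstacle; the only mild wrinkle compared to Proposition~\ref{prop-notcntct} is that the slope family is $\{k+1/n\}_n$ rather than $\{1/n\}_n$, but Thurston's estimate depends only on $|p_n|+|q_n|\to\infty$, which holds here.
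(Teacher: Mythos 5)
Your proposal is correct and follows essentially the same route as the paper: verify Hypothesis~\ref{hyp-braids} for $\Delta^{2\ell}\beta$ with $\ell$ large, invoke Proposition~\ref{prop-allposcont}/Remark~\ref{rmk-manycontsrat} to get $\Phi(-n)=n-1$ pairwise non-isotopic Weinstein-fillable contact structures on $S^3_{k+1/n}(\widehat{\Delta^{2\ell}\beta})$, and then run the Mostow-rigidity argument of Proposition~\ref{prop-notcntct} to obtain at least $(n-1)/c$ pairwise non-contactomorphic ones. The paper's proof is exactly this assembly, stated more briefly; your added details (growth of $c_+-2c_--m$ with $\ell$, the expansion $-n=[-n]$, and the injection of the mapping class group into $\mathrm{Isom}(S^3\setminus K)$) are consistent with it.
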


\begin{proof}
For $\ell > 0$ sufficiently large, the braid $\Delta^{2\ell} \beta$ satisfies Hypothesis \ref{hyp-braids}. Therefore, by Section \ref{subsec-surgknot} and Remark \ref{rmk-manycontsrat}, we obtain at least $\Phi(-n) = n-1$ non-isotopic tight contact structures on $S^3_{k + 1/n}(\widehat{\Delta^{2\ell} \beta})$. Since $\widehat{\Delta^{2\ell}\beta}$ is hyperbolic, an argument analogous to Proposition \ref{prop-notcntct} shows that the number of non-contactomorphic tight contact structures is also as large as desired, for $n \to \infty$.
\end{proof}

We give an example to illustrate the existence of braids satisfying the hypothesis of Proposition \ref{prop-lspace}.

\begin{example}\label{eg-lspace}Let $\beta = \sigma_1^{2k+1} \sigma_2^{-1} \in B_3$. Then $\beta$ is a pseudo-Anosov braid, and $\widehat{\beta}$ is the $(2k+1, 2)$-torus knot. First, observe:
$$\Delta^2 \beta = (\sigma _2 \sigma_1^2 \sigma_2) \sigma_1^{2k+5} \sigma_2 (\sigma_2 \sigma_1^2 \sigma_2)^{-1},$$
therefore $\widehat{\Delta^2 \beta}$ is the $(2k + 5, 2)$-torus knot. As torus knots admit lens space surgeries, they are $L$-space knots. Hence Condition $(2)$ of Proposition \ref{prop-lspace} is satisfied.

To verify Condition $(1)$, we use the Montesinos trick \cite{mont-75}. The computation in Figure \ref{fig-montesinos} shows that the manifold given by the surgery diagram in Figure \hyperref[fig-montesinos]{5.(a)} is a branched double cover of a pretzel knot $P(q, 3, -3)$. Observe that $P(q, 3, -3)$ is the Montesinos link $M(1/q, 1/3, -1/3)$ (up to mirror image), which has as its branched double cover the Seifert fibered manifold $M(0; q, 3, -3)$. Here, we use the notation of Lisca-Stipcisz \cite{listi-07}: $M(e_0; r_1, r_2, \cdots, r_n)$ denotes the Seifert fibered manifold obtained by $e_0$-surgery on an unknot having $n$ meridians with surgery coefficients $-1/r_1, -1/r_2, \cdots, -1/r_n$. This admits a Seifert fibration over the sphere with Euler class $e_0$ and $n$ exceptional fibers. 

Therefore, the manifold given by the surgery diagram in Figure \hyperref[fig-montesinos]{5.(a)} must be the Seifert-fibered manifold $M(0; q, 3, -3)$. Since $e_0 = 0$ for $M(0; q, 3, -3)$, \cite[Theorem 1]{listi-07} and \cite{lismat-04} shows it is an $L$-space. This concludes the verification of Condition $(1)$.
\end{example}

\begin{figure}[p]
	\centering
	\includegraphics[scale=0.33]{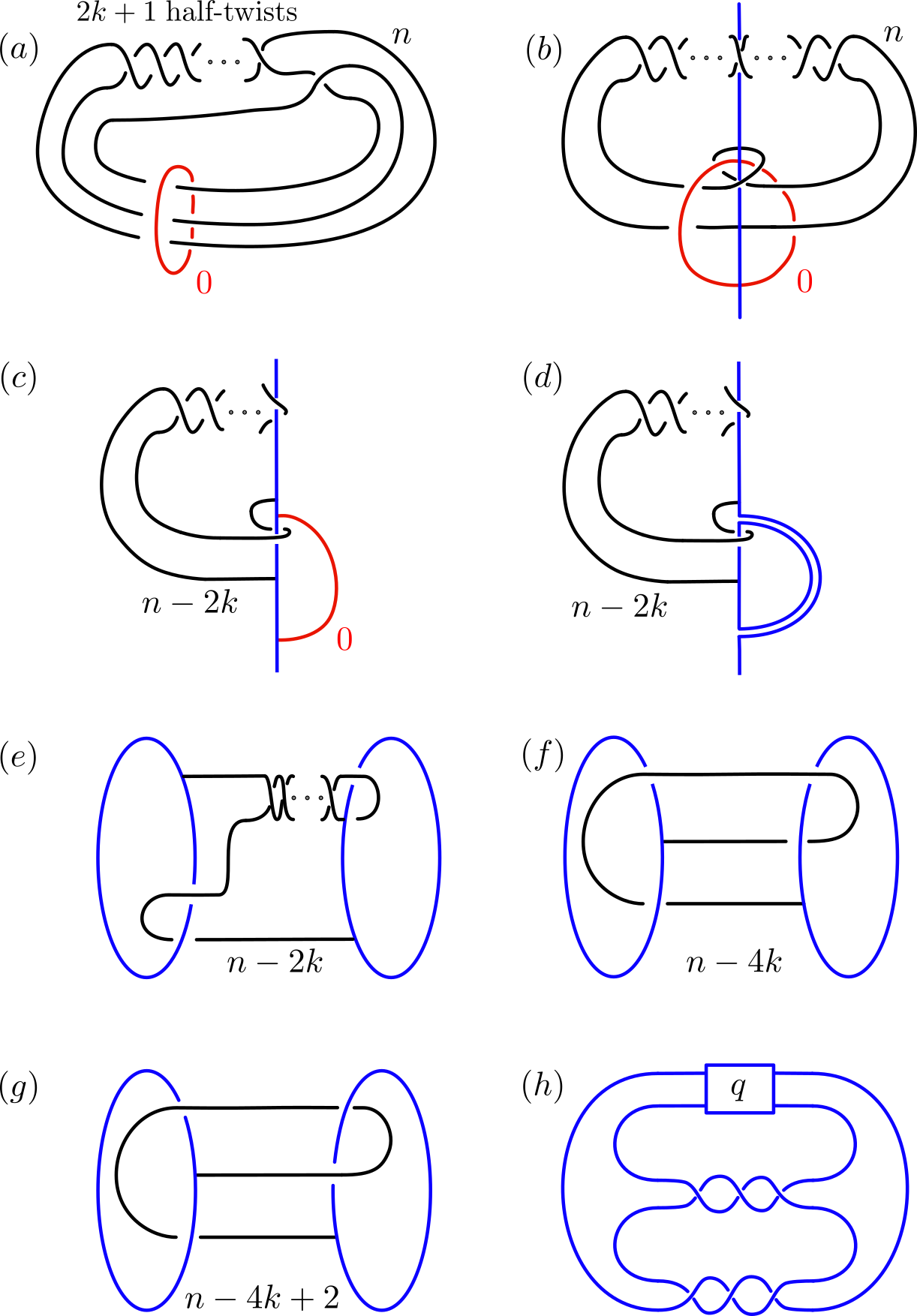}
	\caption{(a) Surgery description of the manifold. (b) Isotopy putting the framed link in a strongly invertible position. (c) Quotient by the $\Bbb Z/2$-symmetry is a rational tangle replacement of the blue unknot along the black and red arcs. The writhe of the black knot in Figure (a) is $2k$, therefore the tangle coefficient of the black arc is $n - 2k$, measured with respect to the blackboard framing of the arc. (d) \& (e) Performing tangle replacement along the red arc and isotopy. (f) \& (g) The black arc can be isotoped to Figure (f) if $k$ is odd and Figure (g) if $k$ is even, with modified tangle coefficient. This is achieved by spinning the endpoint of the black arc counterclockwise along the blue unknotted component on the left. (h) Performing the tangle replacement in Figure (f) \& (g) is tantamount to a band connect sum of two unknots. This produces the pretzel knot $P(q, 3, -3)$ where $q = 4k - 1 - n$ for $k$ odd, and $q = 4k - 2 - n$ for $k$ even.}
	\label{fig-montesinos}
\end{figure}

\begin{remark}
We remark that Min-Nonino \cite{minnon-23} classify tight contact structures for various surgeries on the Whitehead link, which is a hyperbolic $L$-space link. 

Also, note that we could have used any hyperbolic $L$-space knot obtained as closure of a braid $\beta$ satisfying Hypothesis \ref{hyp-braids} to construct hyperbolic $L$-spaces with arbitrarily many distinct tight contact structures. For instance, Baker-Luecke \cite{bl20} construct some intricate examples which are braid positive. In fact, almost all known hyperbolic $L$-space knots are braid positive (and therefore satisfy Hypothesis \ref{hyp-braids} if the braid length is larger than the number of strands), see Baker-Kegel \cite{bl24}. 

That being said, the purpose of Proposition \ref{prop-lspace} is to produce hyperbolic $L$-space knots satisfying Hypothesis \ref{hyp-braids} (in fact, for large $\ell > 0$, $\Delta^{2\ell} \beta$ is always a positive braid) in a somewhat systematic manner. We remark that the strategy behind Proposition \ref{prop-lspace} shares a similarity to the theory of seiferters due to Deruelle-Miyazaki-Motegi \cite{dmm-12}; see also, Baker-Motegi \cite{bakmot-19}. However, it seems unlikely that in general the unknot $U$ in Proposition \ref{prop-lspace} is necessarily a seiferter. 
\end{remark}

\section{Geometric limit of contact structures}\label{sec-geolim}

\subsection{Preliminary definitions}

Recall the following  definition from hyperbolic geometry:

\begin{definition}\label{def-geoconv}Let $M_n$ ($n \geq 1$) and $M$ be finite-volume hyperbolic $3$-manifolds. The sequence of manifolds $(M_n)_{n \geq 1}$ is said to \emph{geometrically converge} to the manifold $M$ if there exists a collection of points $p_n \in M_n$ and $p \in M$, a collection of radii $r_n > 0$, a collection of Lipschitz constants $\lambda_n > 0$ and a collection of $\lambda_n$-Lipschitz diffeomorphisms $\phi_n : B(p_n, r_n) \to B(p, r_n)$ such that $r_n \to \infty$ and $\lambda_n \to 1$ as $n \to \infty$. \end{definition}

We define a notion of geometric convergence for a sequence of hyperbolic $3$-manifolds equipped with a contact structure. 

\begin{definition}\label{def-contgeoconv}Let $(M_n, \xi_n)$ ($n \geq 1$) and $(M, \xi)$ be finite-volume hyperbolic $3$-manifolds equipped with contact structures. We shall say the sequence $(M_n, \xi_n)_{n \geq 1}$ \emph{geometrically converges} to $(M, \xi)$ if, 
\begin{enumerate}
\item $M_n$ converges to $M$ geometrically (cf. Definition \ref{def-geoconv}),
\item We use the notation as in Definition \ref{def-geoconv}. For each $n \geq 1$, there exists a diffeotopy $\{f_t\}_{t \in [0, 1]}$ of $\mathrm{Op}(\partial B_n(p_n, r_n))$ with $f_0 = \mathrm{id}$, such that
$$f_1^*\xi_n = \phi_n^* \xi,$$
and $\{f_t\}$ extends to a diffeotopy $\{\widetilde{f}_t\}_{t \in [0, 1]}$ of $B_n(p_n, r_n)$ satisfying $\widetilde{f}_0 = \mathrm{id}$, $\widetilde{f}_1^*\xi_n = \phi_n^* \xi$. (Here, we are using Gromov's notation of $\mathrm{Op}(Q)$ from \cite{gromov-pdr} to denote a germ of an open neighborhood of $Q$.)
\end{enumerate}
\end{definition}

We remark that geometric limits of contact structures in the sense of Definition \ref{def-contgeoconv} are not necessarily unique. However, the possible ambiguities occur only from contact topological information disappearing to infinity under geometric limits. 

In this section, we investigate geometric convergence of the contact structures constructed in Section \ref{sec-furex}. To do this, we shall require some amount of convex surface theory (see, \cite{honda-1}). We spend some time on this in the next subsection.

\subsection{Basic slice and stabilization}

Let us fix an integral basis of $H_1(T^2; \Bbb Z)$. We recall the following definition due to Honda \cite{honda-1}:

\begin{definition}\cite[Section 4.3]{honda-1} A \emph{basic slice} is a contact structure $\xi^b$ on $T^2 \times [0, 1]$ such that,
\begin{enumerate}
\item $(T^2 \times [0, 1], \xi^b)$ is tight,
\item $T^2 \times \{i\}$, $i = 0, 1$ are convex surfaces, and the number of dividing curves on each is $2$,
\item Let $s_i$ be the slopes of the dividing curves on $T^2 \times \{i\}$, for $i = 0, 1$. If $s_i = p_i/q_i$, $i = 0, 1$ denotes the reduced fraction representing the slopes, then $(p_0, q_0), (p_1, q_1)$ forms an integral basis of $\Bbb Z^2$.
\item $\xi^b$ is minimally twisting, i.e., the dividing curves of every $\partial$-parallel convex torus in $T^2 \times [0, 1]$ have slope between $s_0$ and $s_1$.
\end{enumerate}
\end{definition}

\begin{theorem}\cite[Proposition 4.7]{honda-1}\label{thm-hondaprop} There are exactly two basic slices $(T^2 \times I, \xi^b)$ with $s_0 = 0$ and $s_1 = -1$, up to isotopy relative to boundary. They are distinguished by their relative Euler classes which are $(1, 0)$ and $(-1, 0)$ in $H^1(T^2; \Bbb Z)$, respectively.\end{theorem}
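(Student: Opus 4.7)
The plan is to follow the convex surface theoretic approach of \cite{honda-1}. Observe first that the Farey-adjacency condition from the definition of basic slice, applied to $s_0 = 0$ and $s_1 = -1$, is satisfied since $\det\begin{pmatrix} 0 & -1 \\ 1 & 1 \end{pmatrix} = 1$. Together with minimal twisting, this will mean that no intermediate convex torus is permitted, so the basic slice is indecomposable.

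For \emph{existence}, I would exhibit two basic slices explicitly. Inside the standard tight $T^3$ with contact structure $\ker(\cos(2\pi z)\, dx - \sin(2\pi z)\, dy)$, the slices $\{z = c\}$ are pre-Lagrangian; after a $C^\infty$-small perturbation they become convex tori with two dividing curves whose slope varies linearly with $c$. Truncating at heights where the slopes are $0$ and $-1$ yields one basic slice. A second, non-isotopic one arises from the same construction after applying an orientation-reversing ambient diffeomorphism to the $T^2$-factor (which reverses the sign of the relative Euler class), or equivalently by attaching a positive versus a negative bypass along a Legendrian arc of appropriate slope to a standard neighborhood of a Legendrian curve.

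For \emph{uniqueness}, let $(T^2 \times [0,1], \xi^b)$ be an arbitrary basic slice with the prescribed boundary dividing slopes. After a small convex isotopy and the Legendrian realization principle, arrange that $T^2 \times \{0\}$ carries a Legendrian ruling of slope $-1$ and $T^2 \times \{1\}$ a Legendrian ruling of slope $0$. Take a convex annulus $A$ between such ruling curves. A slope-$(-1)$ ruling on $T^2 \times \{0\}$ meets the two dividing curves of slope $0$ in two points total, and similarly on $T^2 \times \{1\}$, so $|\partial \Gamma_A| = 4$. The key claim is that $\Gamma_A$ has no boundary-parallel arc: such an arc would produce a bypass whose attachment yields a convex torus of dividing slope strictly between $0$ and $-1$ in the Farey graph, contradicting minimal twisting together with the fact that no such Farey-intermediate slope exists. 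Hence $\Gamma_A$ consists of exactly two arcs joining the two boundary components of $A$, admitting exactly two isotopy classes (differing by a half-twist about the core of $A$). A standard edge-rounding and gluing argument then shows each configuration determines a unique tight isotopy class of $\xi^b$ rel.\ boundary.

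To \emph{distinguish} the two and compute their relative Euler classes, pair with convex surfaces representing generators of $H_2(T^2 \times I, \partial; \mathbb{Z})$. Evaluated on the annulus $A$, the two configurations give $\chi(R_+(A)) - \chi(R_-(A)) = \pm 1$; evaluated on a convex annulus $B$ of slope $0$ pushed slightly into the interior from $T^2 \times \{0\}$, both configurations give $0$, since the dividing set of $B$ is boundary-parallel and positive/negative regions are interchangeable. Expressing these computations in the standard basis of $H^1(T^2;\mathbb{Z})$ yields the asserted classes $(1,0)$ and $(-1,0)$. The main obstacle I expect is the no-boundary-parallel-arc claim: it requires carefully combining Honda's bypass attachment formula with the Farey-adjacency of $s_0$ and $s_1$, and simultaneously verifying that the two candidate dividing configurations on $A$ both extend to genuinely tight (not overtwisted) contact structures, which is precisely where the gluing theorem for convex decompositions does the heavy lifting.
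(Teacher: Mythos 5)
The paper itself gives no proof of this statement---it is quoted directly from Honda \cite[Proposition 4.7]{honda-1}---and your outline follows Honda's original convex-surface argument (a spanning convex annulus, exclusion of boundary-parallel dividing arcs via minimal twisting, cut/edge-round plus uniqueness on the resulting solid torus, and distinguishing the two structures by the relative Euler class); this is also exactly the machinery the paper re-uses in Proposition~\ref{prop-basicslice}. So the strategy is the right one, but two steps, as written, are genuinely incorrect. First, your annulus $A$ does not exist: you prescribe its boundary to be a ruling curve of slope $-1$ on $T^2\times\{0\}$ and a ruling curve of slope $0$ on $T^2\times\{1\}$. The two boundary circles of an embedded annulus in $T^2\times I$ are homologous up to sign, hence must have equal slopes. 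The standard fix is to take Legendrian rulings of one common slope, say $\infty$, on both boundary tori; each such ruling meets the corresponding dividing set in two points, so $\Gamma_A$ still has four endpoints and the subsequent combinatorics goes through.

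Second, your reason for excluding boundary-parallel arcs of $\Gamma_A$ is backwards. A convex torus parallel to the boundary with dividing slope strictly between $0$ and $-1$ would \emph{not} contradict minimal twisting---minimal twisting demands precisely that intermediate tori have slopes in that range---and Farey-intermediate slopes adjacent to the endpoints do exist (e.g.\ $-1/2$ is Farey-adjacent to both $0$ and $-1$). The correct argument is via Honda's bypass attachment (Twist Number) lemma: a boundary-parallel arc produces a bypass attached along a ruling curve of slope $\infty$, and since $\infty$ is Farey-adjacent to both $0$ and $-1$, the resulting convex torus has dividing slope $\infty$, which lies outside $[-1,0]$ and therefore does violate minimal twisting. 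Two smaller corrections: the symmetry exchanging the two basic slices is $-\mathrm{id}$ on the $T^2$-factor, which is orientation-preserving but acts by $-1$ on $H_1$ and hence negates the relative Euler class (an orientation-reversing diffeomorphism of the $T^2$-factor does not preserve the boundary data or the orientation conventions); and to compute the remaining component of the relative Euler class you should pair with a second vertical annulus $\gamma\times I$, which is an honest relative cycle, rather than with an annulus pushed off the boundary into the interior, which has no boundary on $\partial(T^2\times I)$. Your $T^3$-slice model for existence is fine, but the minimal-twisting property of that slice also needs a word of justification. With these repairs, your sketch is the standard (cited) proof.
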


\begin{remark}We shall say that the \emph{sign} of a basic slice is positive (resp. negative), if the relative Euler class referred to in Theorem \ref{thm-hondaprop} is $(1, 0)$ (resp. $(-1, 0)$).\end{remark}

\begin{definition}\cite[Section 4.4.5]{honda-1}\label{def-cfblock} Fix $m \geq 1$. A \emph{continued fraction block of length $m$} is a contact structure $\xi^{cf}_m$ on $T^2 \times [1, m+1]$ such that there exists a decomposition,
$$T^2 \times I = B_1 \cup B_2 \cup \cdots \cup B_{m},$$
where $B_i = T^2 \times [i, i+1]$ for all $1 \leq i \leq m$ and $(B_i, \xi^{cf}_m|_{B_i})$ is a basic slice and the dividing slopes on the boundary tori $T^2 \times \{i\}$ of the basic slices are given by $s_i = -i$ for all $1 \leq i \leq m+1$.
\end{definition}

We shall give a description of basic slices in terms of stabilization of Legendrian knots. Although this description is well-known to experts (see \cite{honda-1} for instance), we give a proof for completeness. First we set up some notation. 

Let $K \subset (S^3, \xi_{\mathrm{std}})$ be a Legendrian knot in the sphere with the standard contact structure. Suppose $\mathrm{tb}(K) = -n < 0$. Then $K$ has a tubular neighborhood $N(K)$ contactomorphic to $(S^1 \times D^2, \xi)$ where 
$$\xi = \ker(\sin(2n\pi z) dx + \cos(2n\pi z) dy)$$
Here, $x, y$ are coordinates of $D^2$ and $z$ is the coordinate on $S^1 = \Bbb R/\Bbb Z$.  We will call $N(K)$ the \emph{standard neighborhood} of $K$. The boundary torus $\partial N(K) \cong T^2$ is a convex surface. The characteristic foliation on $\partial N(K)$ consists of two lines of singularities, called the \emph{Legendrian divides}, and is otherwise foliated by the lines $\{y = \mathrm{const}.\}$, called the \emph{ruling curves}. Let $\mu$ be a meridian curve $\{pt\} \times \partial D^2$ and $\lambda$ be one of the ruling curves on $\partial N(K)$. Let us declare (a little unconventionally) that $\mu$ has slope $0$ and $\lambda$ has slope $\infty$. Then the characteristic foliation has two dividing curves parallel to the Legendrian divides, each of slope $-1/n$. Notice that each ruling curve intersects each dividing curve $n$ times.  (See Figure~\ref{fig-charfol}.)\\

\begin{figure}[h]
	\centering
	\includegraphics[scale=0.5]{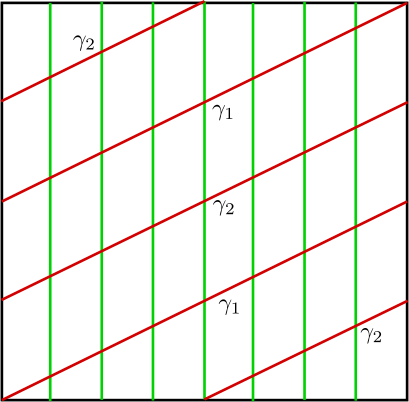}
	\caption{Characteristic foliation of boundary of standard neighborhood of a Legendrian knot. Ruling curves are in green. Legendrian divides are in red.}
	\label{fig-charfol}
\end{figure}

\begin{definition}\label{def-stab}\cite[p. 318]{geiges-book}
	Let $\LL \subset (\R^3,\xi)$ be a Legendrian knot in $\R^3$ equipped with the standard contact structure $\xi$. Let $\pi: \R^3_{xyz} \to \R^2_{xz}$ denote the
	Legendrian front projection. The positive (resp.\ negative) \emph{stabilization}
	of $\LL$ is obtained from $\LL$ by 
	\begin{enumerate}
	\item first choosing a smooth embedded arc 
	$\alpha$  without crossings on $\pi(\LL)$,
	\item modifying the arc 	$\alpha$  in a compactly supported manner by adding two cusps of 
	 positive (resp.\ negative) sign, 
	 \item and finally lifting the modified front projection back to a  Legendrian knot in $(\R^3,\xi)$.
	\end{enumerate}
\end{definition}

\begin{proposition}\label{prop-basicslice}Let $K \subset (S^3, \xi_{\mathrm{std}})$ be a Legendrian knot with $\mathrm{tb}(K) = -n <  0$ and $K'$ denote a stabilization of $K$. We isotope $K'$ to be contained in a standard neighborhood $N(K)$ of $K$. Let $N(K')$ be a standard neighborhood of $K'$ contained in $N(K)$. Let $T = \overline{N(K) \setminus N(K')}$. Then $(T, \xi_{\mathrm{std}}|_T)$ is a basic slice with sign the same as that of the stabilization performed on $K$ to yield $K'$.\end{proposition}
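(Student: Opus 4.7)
The plan is to verify the four defining conditions of a basic slice for $(T, \xi_{\mathrm{std}}|_T)$ and then match the sign. First, since stabilization decreases the Thurston--Bennequin number by one, $\mathrm{tb}(K') = -(n+1)$, so the discussion preceding the proposition identifies the dividing slopes on $\partial N(K)$ and $\partial N(K')$ as $-1/n$ and $-1/(n+1)$, respectively. Writing these as primitive integer vectors $(-1, n)$ and $(-1, n+1)$ in $H_1(T^2;\Z)$, their determinant is $-1$, which verifies condition~(3) of the basic slice definition. Condition~(2), that the two boundary tori are convex with exactly two dividing curves each, is inherent to the standard neighborhood description recalled in the setup.

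Tightness (condition~(1)) is immediate, since $(T,\xi_{\mathrm{std}}|_T)$ is a subdomain of the tight manifold $(S^3, \xi_{\mathrm{std}})$. For minimal twisting (condition~(4)), I would appeal to Honda's classification of tight contact structures on $T^2 \times I$ via paths in the Farey graph \cite{honda-1}: the slopes $-1/n$ and $-1/(n+1)$ are joined by a single edge in the Farey graph, so any single-bypass contact structure between these slopes is automatically minimally twisting. The non-trivial content is to rule out the existence of a $\partial$-parallel convex torus in $T$ with slope outside the Farey arc joining $-1/n$ and $-1/(n+1)$. For this I would use the explicit model $\xi = \ker(\sin(2\pi N z)dx + \cos(2\pi N z)dy)$ on each of $N(K)$ and $N(K')$ (for $N = n$ and $n+1$ respectively) and verify by direct computation that the dividing slopes on the family of concentric convex tori interpolate monotonically with the radius, ruling out the problematic thickenings.

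To identify the sign, I would compute the relative Euler class $e(\xi|_T, s) \in H^2(T, \partial T;\Z)$ for a section $s$ of $\xi|_{\partial T}$ directing the Legendrian divides. Choose a convex annulus $A \subset T$ with Legendrian boundary, one component a ruling curve $R$ on $\partial N(K)$ and the other a ruling curve $R'$ on $\partial N(K')$, taken to be a standard Legendrian pushoff of the stabilized knot $K'$ inside $N(K)$. A standard formula in convex surface theory expresses $\langle e(\xi|_T, s), [A]\rangle$ as the difference $\mathrm{rot}(R') - \mathrm{rot}(R)$ of the rotation numbers of the Legendrian boundary components of $A$. Since a stabilization shifts the rotation number by $\pm 1$, with the sign determined by the type of stabilization per Definition~\ref{def-stab}, this pairing is nonzero and matches the relative Euler class $(\pm 1, 0)$ distinguishing the two basic slices in Theorem~\ref{thm-hondaprop}. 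This assigns $(T, \xi|_T)$ the same sign as the stabilization.

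The main obstacle is the careful bookkeeping of sign conventions: matching Honda's definition of the sign of a basic slice (via the relative Euler class $(\pm 1, 0)$) to the sign of the stabilization (positive or negative cusps in the Legendrian front per Definition~\ref{def-stab}), while simultaneously tracking orientations on dividing curves, Legendrian divides, and ruling curves. Carrying out the rotation-number computation in the standard model on a single explicit annulus reduces the sign check to one base case, with the rest following by naturality.
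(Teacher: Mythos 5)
Your handling of conditions (1)--(3) matches the paper, and your sign computation is essentially the paper's argument in different bookkeeping: the paper evaluates the relative Euler class on the same convex annulus $A$ via $\chi(R_+)-\chi(R_-)=\pm 1$, the sign being that of the unique bypass, while you phrase it as $\mathrm{rot}(R')-\mathrm{rot}(R)$; this is workable, though you would still need to justify that longitudinal ruling curves on $\partial N(K)$ and $\partial N(K')$ have rotation numbers $\mathrm{rot}(K)$ and $\mathrm{rot}(K')$, and to fix the section of $\xi$ along $\partial A$ (your section directs the Legendrian divides, but $\partial A$ consists of ruling curves). The genuine gap is in condition (4), minimal twisting. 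Your plan — compute dividing slopes on a concentric family of tori in the explicit models $\ker(\sin(2N\pi z)\,dx+\cos(2N\pi z)\,dy)$ and check monotonicity — cannot work as stated. First, there is no concentric model for how $N(K')$ sits inside $N(K)$: $K'$ is a stabilization, not the core of $N(K)$ (if it sat concentrically, $T$ would be an $I$-invariant layer and both boundary slopes would be $-1/n$), so the two standard models do not assemble into an explicit interpolating family. Second, and more fundamentally, minimal twisting quantifies over \emph{all} convex tori in $T$ parallel to the boundary, and such tori need not lie in any preferred family; monotonicity of slopes along one family constrains nothing else. Finally, the Farey-edge observation does not make minimal twisting automatic: for any pair of boundary slopes there are tight but non-minimally-twisting structures on $T^2\times I$ (those with extra twisting), and your phrase ``any single-bypass contact structure \ldots is automatically minimally twisting'' begs the question, since the fact that $T$ is a single-bypass layer is precisely what has to be proved.

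The paper closes exactly this gap with convex surface theory, using the annulus you already introduced: the ruling-curve boundary components have twisting $-n$ on $\partial N(K)$ and $-(n+1)$ on $\partial N(K')$, so after perturbing $A$ to be convex the Imbalance Principle \cite[Proposition 3.17]{honda-1} produces a bypass for $\partial N(K')$ contained in $T$, attached along a ruling curve; after normalizing the dividing slopes to $0$ and $-1$, the attaching slope is $-(n+1)\in(-\infty,-1)$, and the bypass-attachment results (\cite{honda-1}; see also \cite[Theorem 5.11]{etnyre-notes}) identify $T$ as a minimally twisting layer, hence a basic slice as in Theorem~\ref{thm-hondaprop}. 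Since your sign computation needs this convex annulus anyway, adopting this step costs nothing and repairs the proof; without it, condition (4) is unestablished.
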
 

\begin{proof}Certainly, $(T, \xi_{\mathrm{std}}|_T)$ is tight as it is contactly embedded in the tight contact sphere $(S^3, \xi_{\mathrm{std}})$. Moreover, $\partial T = \partial N(K) \cup \partial N(K')$ is a convex surface, and each torus component has precisely two dividing curves. Note that the reduced fraction representations $-1/n, -1/(n+1)$ of the slopes of the dividing curves of $\partial N(K), \partial N(K')$ (respectively) form an integral basis of $\Bbb Z^2$. Therefore, $(T, \xi_{\mathrm{std}}|_T)$ satisfies conditions $(1), (2)$ and $(3)$ in Theorem \ref{thm-hondaprop}. It remains to show $(T, \xi_{\mathrm{std}}|_T)$ satisfies condition $(4)$, i.e., that it is minimally twisting. 

Let $A \subset T$ be an annulus transverse to $\partial T$, such that $\partial A = c \cup c'$ where $c \subset \partial N(K)$ and $c' \subset \partial N(K')$ are a pair of ruling curves. Then $\partial A \subset T$ is Legendrian. Let $fr$ denote the framing on $\partial A$ induced from $A$. This agrees with the surface framing induced from the tori $\partial N(K)$ and $\partial N(K')$, respectively, as $A$ is transverse to $\partial T$. Since $c$ (resp. $c'$) intersects the dividing multi-curve of $\partial N(K)$ (resp. $\partial N(K')$) at $2n$ (resp. $2n+2$) points, we obtain
\begin{align*}
\mathrm{tw}(c, fr) &= -n,\\
\mathrm{tw}(c', fr) &= -(n+1),
\end{align*}
{where $\mathrm{tw}(c, fr)$ denotes the \emph{twisting number} of the contact planes along $c$, with respect to framing $fr$.} As $\mathrm{tw}(c, fr), \mathrm{tw}(c', fr) < 0$, we may perturb $A$ relative to $\partial A$ to be convex. Since $\mathrm{tw}(c', fr) < \mathrm{tw}(c, fr)$, the Imbalance Principle \cite[Proposition 3.17]{honda-1} implies $A$ contains a bypass attached along the boundary component $c' \subset \partial A$. 

Therefore, $\partial N(K)$ is obtained from attaching a bypass to $\partial N(K')$ along a ruling curve. By a change of basis we may assume that the dividing curves of $\partial N(K), \partial N(K')$ have slopes $-1$ and $0$, respectively. In the new basis, the ruling curve $c' \subset \partial N(K')$ along which the bypass is attached has slope $-(n+1) \in (-\infty, -1)$. Therefore, $(T, \xi_{\mathrm{std}}|_T)$ is minimally twisting \cite{honda-1} (see also, \cite[Theorem 5.11]{etnyre-notes}). This concludes the proof that $(T, \xi_{\mathrm{std}}|_T)$ is a basic slice. 

To determine the sign of the basic slice $(T, \xi_{\mathrm{std}}|_T)$ (in the sense of Theorem \ref{thm-hondaprop}), we compute the relative Euler class $e(\xi_{\mathrm{std}}|_T)$. Recall the convex annulus $A \subset T$ constructed above. Then,
$$\langle e(\xi_{\mathrm{std}}|_T), [A] \rangle = \chi(R_+) - \chi(R_-),$$
where $R_{\pm}$ are the $(\pm)$-ve regions of $A$ in the complement of the dividing arcs of $A$. Such a region is either an annulus, or a disk. If it is an annulus, the Euler characteristic is zero. Each disk region is necessarily bounded by a boundary-parallel arc in $A$, therefore giving rise to a bypass. Since $A$ contains precisely one bypass, we conclude $\langle e(\xi|_{\mathrm{std}}|_T), [A] \rangle = \pm 1$. Whether it is $+1$ or $-1$ depends on whether the unique disk region is a $(+)$-ve region or a $(-)$-ve region. Equivalently, it depends on the sign of the bypass. This is precisely the sign of the stabilization, given by $\mathrm{rot}(K) - \mathrm{rot}(K')$.  \end{proof}

\begin{remark}\label{rmk-cfblocksignord} In Definition \ref{def-cfblock}, there is an ambiguity in the description of the contact structure on the continued fraction block $(T^2 \times [1, m+1], \xi^{cf}_m)$ since each basic slice $B_i$ in the decomposition may have positive or negative sign. However, one can always re-order (or \emph{shuffle}) the basic slices in a continued fraction block by contact isotopies relative to boundary so that $B_1, \cdots, B_k$ are the positive basic slices and $B_{k+1}, \cdots, B_n$ are the negative ones. Indeed, this follows from our description of basic slices in Proposition \ref{prop-basicslice}, as order of positive and negative stabilizations on a Legendrian knot is irrelevant up to isotopy. Therefore, the ambiguity in the description of Definition \ref{def-cfblock} is simply the number $k \in \{0,\cdots, m\}$ of positive basic slices in the block.
\end{remark}

\subsection{Geometric convergence of rational surgeries}

In Section \ref{sec-furex}, we constructed examples of tight contact structures on hyperbolic manifolds obtained from rational surgery on certain hyperbolic braid links. We shall show that appropriate sequences of these geometrically converge to tight contact structures on the link complement, in the sense of Definition \ref{def-contgeoconv}. For ease of exposition, we will only consider braid knots.

\subsubsection{The manifolds}\label{subsubsec-mfds}
Let  $\beta \in B_m$ be a braid such that the closure $K := \widehat{\beta}$ is a hyperbolic knot. Assume $\beta$ satisfies the condition $c_+ - 2c_{-} - m \geq 1$ (cf. Section \ref{subsec-mfds}). Let $r < -1$ be a real number, and $r = [a_0, a_1, a_2, \cdots]$ denote a negative continued fraction expansion (cf. Definition \ref{def-negcont}). Let $-q_n/p_n$ denote the $n$-th convergent of $r$. Let $M_n$ denote the $p_n/q_n$-surgery on $K$. Suppose $|p_n| + |q_n| \to \infty$. For instance, this is the case if $r$ is irrational. Then $M_n$ is a hyperbolic manifold for $n \gg 0$ sufficiently large. 

\subsubsection{The contact structures on the manifolds}\label{subsubsec-cntmfds}
In Theorem~\ref{thm-eglinkrat}, we constructed
$$\Phi(-q_n/p_n) = |a_0 + 1| |a_1 + 1| \cdots |a_n + 1|$$
nonisotopic contact structures on $M_n$. Recall that these contact structures are indexed by tuples $(k_0, k_1, \cdots, k_n)$ where $1 \leq k_i \leq -a_i-1$ for all $0 \leq i \leq n$. $M_n$ equipped with such a contact structure admits a Weinstein filling $W_n$, given by the Weinstein-Kirby surgery presentation
$$K \cup U_0 \cup U_1  \cup \cdots \cup U_n,$$
\begin{enumerate}
\item $K$ is a Legendrian realization of $\widehat{\beta}$ with Thurston-Bennequin number $1$,
\item $U_i$ is a Legendrian unknot with Thurston-Bennequin number $a_i +1$ and rotation number $2k_i - a_i$,
\item $U_0$ is the meridian of $K$,
\item $U_i$ is the meridian of $U_{i-1}$ for all $0 < i \leq n$.
\end{enumerate}
Let $\mathbf{k} = (k_0, k_1, \cdots)$ be an infinite tuple such that $1 \leq k_i \leq |a_i + 1|$. We denote $\mathbf{k}(n) = (k_0, k_1, \cdots, k_n)$. We denote the contact structure on $M_n$ corresponding to the tuple $\mathbf{k}(n)$ constructed above as $\xi_{r, \mathbf{k}(n)}$. 

\subsubsection{The contact structures on the limit manifold}\label{subsubsec-limcntmfd}
Let $M_\infty := S^3 \setminus K$. Let $N(K) \subset S^3$ denote a Legendrian standard neighborhood of $K$. Let $\mu, \lambda$ denote the meridian and longitude of $K$, respectively. Let us fix an integral basis of $H_1(T^2; \Bbb Z)$. We write:
$$M_\infty = S^3 \setminus N(K) \cup T^2 \times [-1, 0] \cup T^2 \times [0, 1] \cup T^2 \times [1, 2] \cup \cdots,$$
where for all $i \geq 0$, $T^2 \times [i, i+1]$ is glued to $T^2 \times [i-1, i]$ along $T^2 \times \{i\}$ by the linear diffeomorphism,
\begin{gather*}T^2 \times [i, i+1] \supset T^2 \times \{i\} \stackrel{\phi_i}{\longrightarrow } T^2 \times \{i\} \subset T^2 \times [i-1, i]\\
\phi_i = \begin{pmatrix}0 & -1 \\ 1 & -a_i\end{pmatrix},
\end{gather*}
and $T^2 \times \{-1\}$ is glued to $\partial N(K)$ by sending $(1,0)$ to $-\lambda$ and $(0, 1)$ to $\mu$. We define a contact structure $\xi_{r, \mathbf{k}}$ on $M_\infty$ such that 
\begin{enumerate}
\item $\xi_{r, \mathbf{k}}$ agrees with $\xi_{\mathrm{std}}$ on $S^3 \setminus N(K)$, where $\xi_{\mathrm{std}}$ is the standard contact structure on $S^3$,
\item For all $i \geq 0$, $T^2 \times [i-1, i]$ is a continued fraction block of length $|a_i + 2|$ (cf. Definition \ref{def-cfblock}), consisting of $k_i-1$ positive basic slices appearing in any order (cf. Remark \ref{rmk-cfblocksignord}).
\end{enumerate}
This can be achieved since the slopes of the dividing curves of consecutive continued fraction blocks match upon applying the diffeomorphism $\varphi_i$. Hence, the contact structures glue consistently. 

\subsubsection{Geometric convergence}\label{subsubsec-gmconv}
We are now prepared to state the main result:

\begin{theorem}\label{thm-geomlim}Let $r < -1$ be a real number with a negative continued fraction expansion $r = [a_0, a_1, a_2, \cdots]$, and let $-q_n/p_n$ denote the convergents. Suppose $|p_n| + |q_n| \to \infty$ as $n \to \infty$. Let $\mathbf{k} = (k_0, k_1, k_2, \cdots)$ be an infinite tuple such that $1 \leq k_i \leq |a_i + 1|$. 
	Let $\mathbf{k}(n)= (k_0, k_1, k_2, \cdots, k_n)$, and $\xi_{r, \mathbf{k}(n)}$ denote the contact structure on $M_n$ constructed in Section~\ref{subsubsec-cntmfds}.
	Then, $(M_n, \xi_{r, \mathbf{k}(n)})$ geometrically converges to $(M_\infty, \xi_{r, \mathbf{k}})$.
\end{theorem}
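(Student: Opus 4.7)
The strategy is to combine three ingredients: Thurston's hyperbolic Dehn surgery theorem, a topological decomposition of the Dehn filling solid torus $V_n \subset M_n$ matching the one used to define $M_\infty$, and Honda's classification of tight contact structures on continued fraction blocks (Theorem~\ref{thm-hondaprop}). By Thurston's hyperbolic Dehn surgery theorem applied to $M_\infty = S^3 \setminus K$, the hypothesis $|p_n| + |q_n| \to \infty$ yields geometric convergence $M_n \to M_\infty$, and the core of the filling solid torus becomes a short geodesic of length $\ell_n \to 0$ surrounded by a Margulis tube of radius $R_n \to \infty$. Fixing basepoints $p_n \in \partial N(K) \subset M_n$ and $p \in \partial N(K) \subset M_\infty$, for any fixed $R > 0$ and $n$ sufficiently large the ball $B_n(p_n, R)$ is disjoint from this Margulis tube and hence from the residual solid torus $W_n$ appearing below.

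Next, I would invoke the iterated slam-dunk construction underlying Theorem~\ref{thm-eglinkrat} to decompose the Dehn filling solid torus as
\[
V_n = T_0 \cup T_1 \cup \cdots \cup T_n \cup W_n,
\]
where $T_i = T^2 \times [i-1, i]$ glues to its neighbours by the same linear maps $\phi_i$ as in Section~\ref{subsubsec-limcntmfd}, and $W_n$ is a solid torus containing the core of the filling. By Proposition~\ref{prop-basicslice}, Legendrian surgery on $U_i$ (which uses $k_i - 1$ positive and $|a_i + 2| - (k_i - 1)$ negative stabilizations) equips $T_i$ with the structure of a continued fraction block of length $|a_i + 2|$ with exactly $k_i - 1$ positive basic slices. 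This matches the description of $\xi_{r, \mathbf{k}}$ on the corresponding piece of $M_\infty$; by Theorem~\ref{thm-hondaprop} together with Remark~\ref{rmk-cfblocksignord}, the two contact structures are therefore isotopic rel.\ boundary on any compact topologically identified subset disjoint from $W_n$.

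Finally, to verify Definition~\ref{def-contgeoconv}, I would perturb the bilipschitz diffeomorphism $\phi_n : B_n(p_n, R) \to B(p, R)$ from the geometric convergence of manifolds by an ambient isotopy of size $o(1)$, so that it sends the topological block decomposition in $M_n$ onto that in $M_\infty$; the previous paragraph then supplies the required compactly supported ambient isotopy between $\phi_n^*\xi_{r, \mathbf{k}}$ and $\xi_{r, \mathbf{k}(n)}$. The chief technical obstacle is exactly this perturbation: one must show that Thurston's bilipschitz identification in the thick part of $M_n$ can be chosen to respect the Legendrian-surgery block structure. This should follow from the geometric picture of Dehn filling together with the fact that the blocks $T_i$ of bounded index lie in the thick part that converges bilipschitzly to the corresponding part of $M_\infty$.
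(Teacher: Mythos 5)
Your proposal is correct and follows essentially the same route as the paper: geometric convergence via the thick--thin decomposition, the iterated slam-dunk/handle-slide decomposition of the filling solid torus into $T^2\times I$ blocks glued by the matrices $\phi_i$ plus a residual solid torus, identification of each block as a continued fraction block with $k_i-1$ positive basic slices via Proposition~\ref{prop-basicslice}, and matching with the blockwise definition of $\xi_{r,\mathbf{k}}$. The only cosmetic difference is that the paper resolves your ``chief technical obstacle'' by isotoping $\xi_{r,\mathbf{k}(n)}$ so that the block tori become $\varphi_n^{-1}$ of fixed equidistant horotori in $M_\infty$, rather than perturbing the bilipschitz map to respect the blocks, which is the same adjustment viewed from the other side.
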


\begin{proof}Recall that $M_n$ is obtained by a $p_n/q_n$-Dehn surgery on the hyperbolic knot $K := \widehat{\beta}$, and $M_\infty = S^3 \setminus K$ is the knot complement. Let $\varepsilon > 0$ be sufficiently small. Then for all $n$ sufficiently large, $M_n$ admits a $\varepsilon$-thick-thin decomposition,
$$M_n = \mathrm{Thick}_\varepsilon(M_n) \cup \mathrm{Thin}_\varepsilon(M_n),$$
such that $\mathrm{Thin}_\varepsilon(M_n)$ is a tubular neighborhood of the systole of $M_n$ given by core of the surgery solid torus, i.e., a $\varepsilon$-thin Margulis tube. For any given $\varepsilon > 0$, there exists $N(\varepsilon) \gg 0$ such that for all $n \geq N(\varepsilon)$, there exists a $(1 + o(\varepsilon))$-Lipschitz diffeomorphism,
$$\phi_{n, \varepsilon} : \mathrm{Thick}_\varepsilon(M_n) \to \mathrm{Thick}_\varepsilon(M_\infty).$$
Thus, the sequence of hyperbolic manifolds $M_n$ geometrically converges to $M_\infty$.

{For convenience of the reader we recall some relevant terminology. A \emph{Weinstein 2-handle} is the standard $D^2 \times D^2 \subset \R^4$ equipped with 
\begin{enumerate}
\item the  standard symplectic form $\omega_{std} = dx_1\wedge dy_1 + dx_2\wedge dy_2$ induced from $\R^4$,
\item the Liouville vector field $$X=-x_1\frac{\partial}{\partial x_1}-x_2\frac{\partial}{\partial x_2}+2y_1\frac{\partial}{\partial y_1}+2y_2\frac{\partial}{\partial y_2}.$$
\end{enumerate}
The regions $S^1 \times D^2$ and  $  D^2 \times S^1$ are called the \emph{attaching}
and \emph{co-attaching regions} respectively. The circles $S^1 \times \{0\}$ and  $  \{0\} \times S^1$ are called the \emph{core circle}
and \emph{belt circle} respectively. Since $(D^2 \times D^2, \omega_{std}, X) $ is a Weinstein manifold-with-corners, the attaching and co-attaching regions $S^1 \times D^2$ and  $  D^2 \times S^1$ inherit canonical induced contact structures.
}

Recall the Weinstein filling $W_n$ of $M_n$ described in Section \ref{subsubsec-cntmfds}, given by the Weinstein-Kirby presentation $K \cup U_0 \cup U_1  \cup \cdots \cup U_n$. Let $h^2_K$ and $h^2_{U_i}$ ($0 \leq i \leq n$) denote the Weinstein $2$-handles in $W_n$ corresponding to the components of this link. We smoothly slide the $2$-handle $h^2_{U_n}$ over the boundary $h^2_{U_{n-1}}$ so that the attaching circle of $h^2_{U_n}$ is the belt circle of $h^2_{U_{n-1}}$. The effect on the boundary of the $4$-manifold is the same as the slam-dunk isotopy \cite[Section 5.3., p. 163]{gosti-99} pushing $U_n$ inside the surgery solid torus of $U_{n-1}$. Similarly, we inductively slide $h^2_{U_i}$ over $h^2_{U_{i-1}}$ for all $1 \leq i \leq n$, and finally slide $h^2_{U_0}$ over $h^2_K$. Taking boundary, we obtain the decomposition:
\begin{equation}\label{eqn-neck}M_n \cong (S^3 \setminus N(K)) \cup T^2 \times [-1, 0] \cup T^2 \times [0, 1] \cup \cdots \cup T^2 \times [n-1, n] \cup S^1 \times D^2\end{equation}
Here,
\begin{enumerate}
\item $N(K)$ is a fixed standard neighborhood of the Legendrian knot $K \subset (S^3, \xi_{\mathrm{std}})$,
\item $T^2 \times [-1, 0]$ is the co-attaching region of $h^2_K$ with a neighborhood of the belt circle removed. Or equivalently, the surgery solid torus for $K$ with a neighborhood of the core circle removed.
\item $T^2 \times [i, i+1]$, $0 \leq i \leq n-1$ is the co-attaching region of $h^2_{U_i}$ after the slide, with a neighborhood of the belt circle removed. Or equivalently, the surgery solid torus of $U_i$ after the slam-dunk isotopy, with a neighborhood of the core circle removed.
\item $S^1 \times D^2$ is the co-attaching region of $h^2_{U_n}$ after the slide. Or equivalently, the surgery solid torus of $U_n$ after the slam-dunk isotopy.
\end{enumerate}

We may imagine the hyperbolic structure on $M_n$ as the hyperbolic manifold-with-boundary $S^3 \setminus N(K)$ and a Margulis tube given by the solid torus in Item $(4)$, connected by a ``long, narrow neck" $T^2 \times [-1, n]$. Observe that pulling back $\xi_{r, \mathbf{k}(n)}$ by the handleslide/slam-dunk isotopies, we get a contact structure which agrees with the contact structure on $S^3 \setminus N(K)$ induced from $(S^3, \xi_{\mathrm{std}})$. Furthermore, the contact structure on the solid torus in Item $(4)$ is the canonical contact structure on the co-attaching boundary of a Weinstein $2$-handle. Thus, convergence of the contact structures $\xi_{r, \mathbf{k}(n)}$ on $M_n$ need only be verified on the neck region.

We proceed to carry this out more precisely. Let $M_\infty = S^3 \setminus K$ be hyperbolic, so that $M_\infty=\Hyp^3/\Gamma$. Then an embedded incompressible torus
in  $M_\infty$ will be called a \emph{horotorus} if it is the quotient of a horosphere by a $\Z\oplus \Z$ stabilizing it.
With respect to the complete hyperbolic structure on $M_\infty = S^3 \setminus K$, fix now a horotorus $T_{-1} \subset M_\infty$. Let $T_n \subset M_\infty$ be a horotorus such that the unbounded component of $M_\infty \setminus T_n$ is exactly the $1/n$-thin part of $M_\infty$. Let $T_0, \cdots, T_{n-1}$ be such that $T_{-1}, T_0, \cdots, T_n$ are a collection of equidistant, parallel horotori. Since $(M_n)$ converges to $M_\infty$, after possibly passing to a subsequence of $(M_n)$ we can find a $(1 + o(1/n))$-Lipschitz diffeomorphism
$$\varphi_n := \phi_{n, 1/n} : \mathrm{Thick}_{1/n}(M_n) \to \mathrm{Thick}_{1/n}(M_\infty)$$
We isotope the contact structure $\xi_{r, \mathbf{k}(n)}$ on $M_n$ so that in Equation \ref{eqn-neck}, the tori $T^2 \times \{i\}$ are precisely $\varphi_n^{-1}(T_i)$, for $-1 \leq i \leq n$.

We equip $M_\infty$ with the contact structure $\xi_{r, \mathbf{k}}$ constructed in Section \ref{subsubsec-limcntmfd}, where the domains bounded by the horotori $T_{i-1}, T_i$ ($i \geq 0$) are continued fraction blocks of length $|a_i + 2|$ containing $k_i - 1$ positive basic slices. By Proposition \ref{prop-basicslice}, $\varphi_n$ is a contactomorphism. Therefore, the diffeomorphisms $\varphi_n$ certify the geometric convergence of $(M_n, \xi_{r, \mathbf{k}(n)})$ to $(M_\infty, \xi_{r, \mathbf{k}})$.
\end{proof}

\subsubsection{Uncountably many tight contact limits} In Tripp \cite{tripp-06}, it is shown that there are uncountably many tight contact structures on an irreducible open three-manifold which are pairwise non-contactomorphic, provided the manifold possesses an end of nonzero genus. In this section we exhibit a family of such examples which also naturally arise as geometric limits of contact structures on closed three-manifolds. We begin with the following notion due to \cite{tripp-06}:

\begin{definition}Two contact structures $\xi, \xi'$ on a three-manifold $M$ are said to be \emph{properly isotopic} if there exists an  isotopy of $M$ taking $\xi$ to $\xi'$. More precisely, there exists a smooth homotopy of diffeomorphisms $\{f_t : M \to M\}$ such that $f_0 = \mathrm{id}$, $f_1^*\xi' = \xi$. \end{definition}

As Gray's stability theorem fails on noncompact manifolds, this is distinct from the notion of an isotopy of contact plane fields in general. Next, we recall the notion of \emph{slope at infinity} introduced in \cite{tripp-06}, which is an invariant of contact structures on open three-manifolds up to proper isotopy. We specialize the definition to the situation of a manifold with toric end, for simplicity. 

\begin{definition}\cite[Section 3]{tripp-06} Let $(M, \xi)$ be a contact three-manifold, where $M$ is the interior of a compact three-manifold $\overline{M}$ with torus boundary $\partial \overline{M} \cong T^2$. Fix an integral basis of $H_1(\partial M; \Bbb Z)$. We say an embedded convex torus $T \subset M$ is \emph{end-parallel} if $T \subset \overline{M}$ is isotopic to $\partial \overline{M}$. Let $T \times [0, \infty) \subset M$ be the unbounded component bounded by such a torus. We call the contact manifold $E = (T \times [0, \infty), \xi)$ together with its embedding in $M$, a \emph{contact end} of $M$. The set of contact ends denoted $\mathrm{Ends}(M, \xi)$ is a directed set, directed by reverse inclusion. 

Fix an integral basis of $H_1(\partial M; \Bbb Z)$. For any end-parallel convex torus $T$, we obtain a corresponding integral basis of $H_1(T; \Bbb Z)$. We denote the slope of the dividing curves of $T$ measured against this basis as $slope(T)$. For any contact end $E$ of $M$, we define the slope of $E$ as 
$$slope(E) = \sup_T slope(T),$$
where $T$ varies over all end-parallel convex tori contained in $E$. Then $slope : \mathrm{Ends}(M, \xi) \to \Bbb R \cup \{\infty\}$ defines a net. If the net is convergent, we define the \emph{slope at infinity} of $M$ to be the limit.
\end{definition}

\begin{definition}Let $r < -1$ be a real number with negative continued fraction expansion $r = [a_0, a_1, a_2, \cdots]$. Let $\mathbf{k} = (k_0, k_1, k_2, \cdots)$ be an infinite tuple such that $1 \leq k_i \leq |a_i + 1|$. 
		We shall say the \emph{sign} of $\mathbf{k}$ is $+$ (resp. $-$) if for all $i \geq 0$ sufficiently large, $k_i = |a_i +1|$ (resp. $k_i = 1$). Otherwise, we say sign of $\mathbf{k}$ is $\pm$.
\end{definition}

We have the following corollary of Theorem \ref{thm-geomlim}:

\begin{corollary}\label{cor-propisopunctbl} Let $\xi_{r,\mathbf{k}}$ denote the contact structures on $M_\infty$ constructed in Section \ref{subsubsec-limcntmfd}. Then,
\begin{enumerate}[label=\normalfont(\arabic*)]
\item $\xi_{r, \mathbf{k}}$ is a tight contact structure on $M_\infty$,
\item $\xi_{r, \mathbf{k}}$ and $\xi_{r', \mathbf{k}'}$ are properly isotopic if and only if $r = r'$ and $\mathbf{k}, \mathbf{k}'$ are of the same sign.
\end{enumerate}
\end{corollary}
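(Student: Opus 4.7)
The plan is to establish tightness in (1) via the geometric convergence of Theorem~\ref{thm-geomlim}, and to prove the classification in (2) by combining Tripp's slope-at-infinity invariant with a ``sign-at-infinity'' invariant for the ``only if'' direction, and by an inductive construction of proper isotopies for the ``if'' direction.

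For (1), I would argue by contradiction. Suppose $\xi_{r,\mathbf{k}}$ is overtwisted, witnessed by an overtwisted disk $D \subset M_\infty$. Since $D$ is compact, it lies in $\mathrm{Thick}_\varepsilon(M_\infty)$ for some $\varepsilon > 0$. By Theorem~\ref{thm-geomlim}, for all $n$ sufficiently large, the $(1+o(1))$-Lipschitz diffeomorphism $\varphi_n$ is a contactomorphism on the relevant compact domain, so $\varphi_n^{-1}(D) \subset M_n$ yields an overtwisted disk for $\xi_{r,\mathbf{k}(n)}$. This contradicts the tightness of $\xi_{r,\mathbf{k}(n)}$, which is Weinstein-fillable by the proof of Theorem~\ref{thm-eglinkrat}.

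For the ``only if'' direction of (2), I would use two invariants. First, by construction, the end-parallel convex tori $T^2 \times \{N\} \subset M_\infty$ have dividing slopes that, after tracking through the gluings $\phi_i$, are the convergents $-q_N/p_N \to r$. Hence the slope at infinity of $\xi_{r,\mathbf{k}}$ is $r$, and by \cite{tripp-06} this is invariant under proper isotopy, forcing $r = r'$. Second, I would declare the \emph{sign at infinity} of $\xi_{r,\mathbf{k}}$ to be $+$ (respectively $-$) if there exists a contact end of $(M_\infty, \xi_{r,\mathbf{k}})$ whose basic slice decomposition consists entirely of positive (respectively negative) basic slices, and $\pm$ otherwise. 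This is manifestly invariant under contactomorphism, and hence under proper isotopy. For $\mathbf{k}$ of sign $+$ in the given definition, the end $T^2 \times [N, \infty) \subset M_\infty$ for $N$ past the stabilizing index witnesses sign $+$, and similarly for sign $-$; for sign $\pm$, both signs persist in every end by definition, so neither type of witnessing end exists.

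For the ``if'' direction of (2), given $r = r'$ and matching signs, I would construct a proper isotopy from $\xi_{r,\mathbf{k}}$ to $\xi_{r,\mathbf{k}'}$ inductively. At stage $n$, I would build a compactly supported ambient contact isotopy that rectifies the $n$-th continued fraction block of $\xi_{r,\mathbf{k}}$ to match that of $\xi_{r,\mathbf{k}'}$. The primary tools are Honda's bypass shuffling (Proposition~\ref{prop-basicslice} together with Remark~\ref{rmk-cfblocksignord}) for rearranging basic slices within a block, combined with compactly supported transport of positive or negative basic slices between blocks. The same-sign hypothesis supplies an infinite reservoir of the needed basic slice in the tail, allowing any discrepancy in a finite initial segment to be absorbed; in the sign $\pm$ case both signs are available, yielding full flexibility. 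The composition of these stages is a proper isotopy, well-defined because the supports escape to infinity and hence only finitely many stages affect any given compact subset.

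The principal technical obstacle lies in the ``if'' direction: transporting a basic slice across multiple intervening blocks with differing dividing slopes requires carefully composed, compactly supported bypass moves that respect the convex structure throughout, and one must verify that the inductive limit defines a genuine diffeomorphism of $M_\infty$ rather than merely a homotopy equivalence. The convex surface calculus of Honda \cite{honda-1} and the asymptotic framework of Tripp \cite{tripp-06} should furnish the tools needed to execute this construction rigorously.
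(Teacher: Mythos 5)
Your part (1) and the slope-at-infinity half of part (2) follow the paper's argument essentially verbatim (pull back an overtwisted disk through $\varphi_n$ into the tight, Weinstein-fillable $(M_n,\xi_{r,\mathbf{k}(n)})$; read off the slope at infinity from the cofinal family of convex tori $T^2\times\{i\}$ and invoke \cite{tripp-06}). Your ``sign at infinity'' makes explicit something the paper leaves implicit, but it is not \emph{manifestly} invariant: a contact end carries no preferred decomposition into basic slices, so to rule out that an end of a mixed-sign $\xi_{r,\mathbf{k}}$ admits some \emph{other} all-positive decomposition you must again appeal to Honda's rel-boundary invariance of the per-block signs (Theorem~\ref{thm-hondaprop} and \cite{honda-1}). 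That is a fixable point.

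The genuine gap is in your ``if'' direction. Your inductive stage is a \emph{compactly supported} ambient contact isotopy that rectifies the $n$-th continued fraction block by ``transporting'' positive or negative basic slices between blocks. No such move exists. If a stage is supported in a compact piece, say $T^2\times[-1,N]$, then the structures before and after that stage are isotopic rel boundary of that piece; but by Honda's classification of tight, minimally twisting contact structures on $T^2\times I$, the number of positive basic slices in \emph{each} continued fraction block is an invariant up to isotopy rel boundary---shuffling (Remark~\ref{rmk-cfblocksignord}, Proposition~\ref{prop-basicslice}) is permitted only inside a single block, never across block boundaries, and here consecutive blocks are genuinely distinct continued fraction blocks of the Farey path determined by $[a_0,a_1,\dots]$. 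So whenever $k_n\neq k_n'$, block $n$ cannot be corrected by any compactly supported stage, and your induction cannot even begin; composing stages whose supports escape to infinity does not help, because each individual stage is already impossible. Any proper isotopy between $\xi_{r,\mathbf{k}}$ and $\xi_{r,\mathbf{k}'}$ with differing initial data must be non-compactly supported in an essential way: the role of the same-sign hypothesis is precisely that the tail eventually consists entirely of positive (resp.\ negative) basic slices---or, in the $\pm$ case, contains infinitely many of both---so the finite discrepancy is absorbed by an isotopy moving the entire end (a shift/absorption argument into the infinite reservoir), not by local corrections. This is what the paper's terse appeal to shuffling together with the sign hypothesis is meant to encode, and it is the step your proposal would need to replace.
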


\begin{proof}\leavevmode
\begin{enumerate}[label=\normalfont(\arabic*), leftmargin=*]
\item Suppose $(M_\infty, \xi_{r, \mathbf{k}})$ was overtwisted. Then it would contain an overtwisted disk $D_{ot}$. By compactness of the disk, there exists $n \gg 0$ large such that $D_{ot}$ is entirely contained inside $\mathrm{Thick}_{1/n}(M_\infty)$. Recall the diffeomorphism $\varphi_n : \mathrm{Thick}_{1/n}(M_n) \to \mathrm{Thick}_{1/n}(M_\infty)$ from the Proof of Theorem \ref{thm-geomlim}. Thus, $\varphi_n^{-1}(D_{ot}) \subset \mathrm{Thick}_{1/n}(M_n)$ is an overtwisted disk in $\varphi_n^* \xi_{r, \mathbf{k}}$. As $\varphi_n^*\xi_{r, \mathbf{k}}$ is isotopic to $\xi_{r, \mathbf{k}(n)}$, we conclude $(M_n, \xi_{r, \mathbf{k}(n)})$ is also overtwisted. This is a contradiction.

\item As $\xi_{r, \mathbf{k}}$ is tight, by \cite[Proposition 4.1]{tripp-06} the slope at infinity exists. By taking the cofinal sequence of ends bounded by $T^2 \times \{i\}$ in Section \ref{subsubsec-limcntmfd}, we see that the slope at infinity is $r$ (after choosing an appropriate reference homology basis for the toric end). Finally, observe that if $\mathbf{k}, \mathbf{k}'$ are of the same sign, then an isotopy between $\xi_{r, \mathbf{k}}, \xi_{r, \mathbf{k}'}$ can be constructed by shuffling the positive and negative basic blocks (cf. Remark \ref{rmk-cfblocksignord}) \qedhere
\end{enumerate}
\end{proof}

The following Corollary shows that the notion of geometric convergence of contact structures in Definition~\ref{def-contgeoconv} is compatible with Tripp's work in \cite{tripp-06}.
\begin{corollary} \label{cor-inflts} $M_\infty$ admits uncountably many tight contact structures up to contactomorphism arising as geometric limits of tight contact structures on $M_n$.\end{corollary}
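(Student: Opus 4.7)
The plan is to leverage the explicit family $\{\xi_{r, \mathbf{k}}\}$ of tight contact structures on $M_\infty$ constructed in Section~\ref{subsubsec-limcntmfd} and shown in Theorem~\ref{thm-geomlim} to arise as geometric limits. First, I would restrict to irrational $r < -1$, so that the convergents $-q_n/p_n$ of the negative continued fraction expansion automatically satisfy $|p_n|+|q_n|\to\infty$; Thurston's hyperbolic Dehn surgery theorem then supplies hyperbolic $M_n$ for $n\gg 0$. For any fixed sign of $\mathbf{k}$ (say $k_i = |a_i+1|$ for all large $i$), the resulting contact structure $\xi_{r, \mathbf{k}}$ is tight by Corollary~\ref{cor-propisopunctbl}(1) and arises as a geometric limit of tight contact structures on $M_n$ by Theorem~\ref{thm-geomlim}. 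This already yields an uncountable family of tight geometric limits indexed by $r$; the remaining task is to show that uncountably many members of this family are pairwise non-contactomorphic.

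By Corollary~\ref{cor-propisopunctbl}(2), distinct values of $r$ give distinct \emph{proper isotopy} classes via the slope-at-infinity invariant of \cite{tripp-06}. To promote this to non-contactomorphism, suppose $\phi\colon (M_\infty, \xi_{r,\mathbf{k}}) \to (M_\infty, \xi_{r',\mathbf{k}'})$ is a contactomorphism. Since $M_\infty = S^3\setminus K$ is a finite-volume cusped hyperbolic $3$-manifold, Mostow rigidity (in the spirit of the argument in Proposition~\ref{prop-notcntct} using \cite{gameth-03}) implies that $\mathrm{MCG}(M_\infty)$ is canonically isomorphic to the finite isometry group $G := \mathrm{Isom}(M_\infty)$. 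Hence $\phi$ is diffeotopic to some $\iota\in G$; chasing this diffeotopy through the pullback shows $\xi_{r, \mathbf{k}}$ is properly isotopic to $\iota^*\xi_{r',\mathbf{k}'}$ in the sense of \cite{tripp-06}.

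Each $\iota\in G$ preserves the unique cusp of $M_\infty$ and so descends to an element $A_\iota \in GL_2(\Z)$ acting on $H_1(T^2; \Z)$; the set $H := \{A_\iota : \iota \in G\}$ is therefore finite. Tracking slopes against the fixed reference basis, the slope at infinity of $\iota^*\xi_{r',\mathbf{k}'}$ is the M\"obius transform $A_\iota^{-1}\cdot r'$, and Corollary~\ref{cor-propisopunctbl}(2) then forces $r = A_\iota^{-1}\cdot r'$. Consequently, each contactomorphism class meets the admissible parameter set in at most $|H|$ values of $r$; since the admissible $r$ form an uncountable set, this yields uncountably many contactomorphism classes, completing the proof. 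The main obstacle I anticipate is the careful bookkeeping in the step going from contactomorphism to proper isotopy: one must align Tripp's notion of proper isotopy with the ambient diffeotopy promoting $\phi$ to $\iota$, and then verify that the slope at infinity transforms by the expected M\"obius action induced by $A_\iota$ on the cusp torus.
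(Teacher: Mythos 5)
Your proposal is correct and follows essentially the same route as the paper: uncountably many proper isotopy classes, distinguished by the slope at infinity via Corollary~\ref{cor-propisopunctbl}, combined with finiteness of the mapping class group of $M_\infty$, give uncountably many contactomorphism classes. Two remarks on where the treatments diverge. First, your justification of finiteness glosses over the fact that $M_\infty$ is an \emph{open} manifold: the reference \cite{gameth-03} invoked in Proposition~\ref{prop-notcntct} concerns closed hyperbolic $3$-manifolds and does not apply directly, and one must rule out exotic behavior of diffeomorphisms near the end. The paper does this by showing $\mathrm{MCG}(M_\infty)\cong \mathrm{MCG}(S^3\setminus \nu(K))$ (any diffeomorphism can be isotoped to preserve an end-parallel torus, and $\mathrm{MCG}_{\partial}(T^2\times[0,\infty))$ is trivial), and then applying Waldhausen's theorem for the Haken knot exterior together with Mostow rigidity to conclude finiteness; some version of this reduction is needed to make your second paragraph rigorous. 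Second, your M\"obius-transformation bookkeeping of how the slope at infinity changes under a mapping class is valid (though, strictly, one should appeal to proper-isotopy invariance of the slope at infinity from \cite{tripp-06} rather than to Corollary~\ref{cor-propisopunctbl}(2), since $\iota^*\xi_{r',\mathbf{k}'}$ is not literally a member of the constructed family), but it is not needed: once the mapping class group is known to be finite, each contactomorphism class contains at most finitely many proper isotopy classes, which already yields the conclusion, and this is the counting the paper uses.
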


\begin{proof}By Corollary \ref{cor-propisopunctbl}, $M_\infty$ admits uncountably many tight contact structures up to proper isotopy. Note that $M_\infty = S^3 \setminus K$ is a knot complement. Observe that the smooth mapping class group of the open manifold $M_\infty$ agrees with the smooth mapping class group (not relative to the boundary) of the thickened knot complement $S^3 \setminus \nu(K)$. Indeed, any diffeomorphism of $M_\infty$ must send an end-parallel torus $T^2 \subset M_\infty$ to an isotopic copy of itself. After isotopy, we may ensure the diffeomorphism preserves this torus. We cut along this torus and observe that the smooth mapping class group $\mathrm{MCG}_{\partial}(T^2 \times [0, \infty))$, relative to $T^2 \times \{0\}$, is trivial. 

Since the thickened knot complement is oriented Haken, a theorem of Waldhausen (see, for instance, Hempel \cite[Corollary 13.7]{hempel-book}) implies the smooth mapping class group $\mathrm{MCG}(S^3 \setminus \nu(K))$ is isomorphic to the group of self-homotopy equivalences $\mathrm{hAut}(S^3 \setminus \nu(K))$. Since $K$ is hyperbolic, the latter is a finite group by the Mostow rigidity theorem. Therefore, $\mathrm{MCG}(M_\infty)$ is finite. 

Thus, we must have uncountably many mapping class orbits of tight contact structures, as desired. \end{proof}


\bibliography{contactbibs}
\bibliographystyle{alpha}
\end{document}